\newtheorem{thm}{Theorem}
\newtheorem{lemma}{Lemma}
\newtheorem{coro}{Corollary}
\newtheorem{prop}{Proposition}
\newtheorem*{example}{Example}
\newtheorem*{Remark}{Remark}
\numberwithin{equation}{section} 
\newcommand*{\R}{\mathbb{R}}
\newcommand*{\C}{\mathbb{C}}
\renewcommand*{\l}{\lambda}
\newcommand*{\mc}{\mathcal}
\begin{document}
\author{Alexander Thomas}
\title[Cross ratio in a pencil of conics]{Geometric characterizations of the cross ratio in a pencil of conics}
\address{Max-Planck Institute for Mathematics, Vivatsgasse 7, 53111 Bonn, Germany}
\email{athomas@mpim-bonn.mpg.de}

\begin{abstract}
We give geometric ways to determine the cross ratio of conics in a pencil using elementary projective geometry.
\end{abstract}

\maketitle

\vspace*{-0.6cm}
\begin{figure}[h]
\centering
\includegraphics[height=3.8cm]{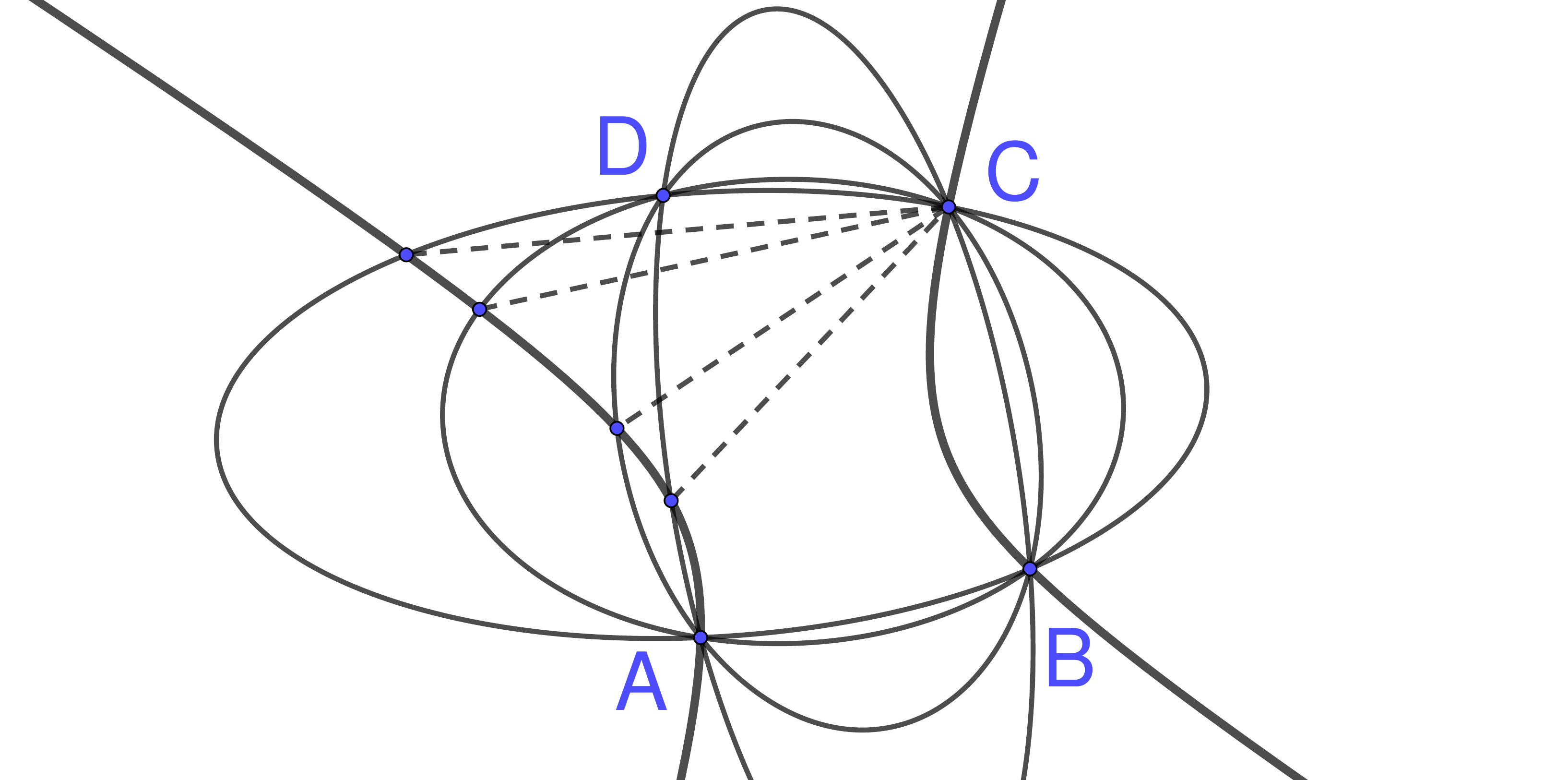}
\end{figure}

\vspace*{-0.5cm}

\section*{Introduction}

In projective geometry the fundamental invariant is the cross ratio. On a line, the action of projective transformations is 3-transitive, i.e. a triple of points can be mapped to any other triple of points via a projective transform. For four points, the cross ratio is an invariant.

On the plane, the cross ratio is defined for four points on a common line, and by duality for four lines going through a common point. Collinear points and concurrent lines are examples of pencils. So the question raises \emph{whether there is a notion of cross ratio in a pencil of conics}.

Abstractly, there is the following answer: denote by $Q(\C^2)$ the space of quadratic forms on $\C^2$, which can be considered as the space of conics. A pencil of conics is precisely a line in the projective space $\mathbb{P}(Q(\C^2))$, see \cite{Berger}, chapter 14. So there is a notion of cross ratio in a pencil of conics.

The question we address here is how to determine \emph{geometrically} the cross ratio of four conics in a pencil.
After summarizing basic facts from elementary projective geometry in section \ref{section1}, we give six geometric characterizations for the cross ratio of conics in section \ref{section2}.
Notice that we work in $\C P^2$, but our pictures are drawn in $\R^2$. We work only with non-degenerate pencils of conics, the other cases can be easily obtained as limit cases.

\textit{\textbf{References.}} Though conics were discovered about 2300 years ago, only very recently books emphasizing pictures and geometric approaches become available. Our main reference is the excellent book of Jean-Claude Sidler \cite{Sidler}. We also warmly recommend the books \cite{Akopyan}, \cite{Glaeser} and \cite{Kendig}.
Classical references include \cite{Berger} and \cite{Hilbert}. We recommend \cite{Michel} for a glance on the deep geometric knowledge at the beginning of the 20\textsuperscript{th} century.

\section{Basic Facts on the cross ratio}\label{section1}

We present here some facts on cross ratios in plane projective geometry which we will use in the sequel. For less known results we include proofs. Nothing in this section is new.

\subsection{First properties}

The cross ratio is denoted by $(A,B,C,D)$. We use here the convention 
$$(A,B,C,D)= \frac{(A-C)(B-D)}{(B-C)(A-D)}.$$

\begin{prop}\label{cross-ratio-line}
Let $(a,b,c,d)$ be four lines in a pencil (see Figure \ref{birapport-droites}). Any line not in the pencil intersects them in points $A, B, C, D$ respectively. Then $$(a,b,c,d)=(A,B,C,D).$$
\end{prop}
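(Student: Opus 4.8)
The plan is to show that a projective transformation of the plane induces a projective transformation between any two transversal lines, and to reduce the cross-ratio identity to a single well-chosen configuration where the computation is transparent.

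First I would recall that the cross ratio $(a,b,c,d)$ of four concurrent lines is, by definition, the cross ratio of the four points they cut out on an arbitrary transversal line, so the real content is the \emph{independence} of the choice of transversal. Concretely, let the pencil of lines pass through a common point $P$, and let $\ell, \ell'$ be two transversals not through $P$, meeting the four lines in $A,B,C,D$ and $A',B',C',D'$ respectively. The key observation is that the central projection from $P$ carries $\ell$ to $\ell'$, sending $A \mapsto A'$, $B \mapsto B'$, and so on. The crux of the argument is therefore to prove that a perspectivity (central projection) between two lines preserves the cross ratio of four points.

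The cleanest way I would carry this out is by choosing affine coordinates adapted to the configuration. Place $P$ at the origin, so each line of the pencil has an equation of the form $y = m_i x$ (handling a possible vertical line as a limiting case, or by a preliminary projective change of coordinates). A transversal line can be parametrized affinely, and the coordinate of its intersection point with the line of slope $m_i$ is a fractional-linear (Möbius) function of $m_i$; the coefficients of this Möbius map depend only on the transversal, not on $i$. Since the cross ratio is invariant under any common fractional-linear substitution applied to the four arguments, $(A,B,C,D)$ computed on one transversal equals the cross ratio of the slopes $(m_a, m_b, m_c, m_d)$, and likewise on any other transversal. Hence $(A,B,C,D)=(A',B',C',D')$, and both equal the intrinsic cross ratio $(a,b,c,d)$ of the four lines.

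The main obstacle is the bookkeeping around degenerate and infinite cases: a line of the pencil may be vertical (infinite slope), or a transversal may meet a line at a point at infinity, so the naive affine computation can break down. I would handle this by working projectively from the outset, sending $P$ to a convenient point and using homogeneous coordinates so that the ``slope'' parameter $m_i$ becomes a genuine projective coordinate on the pencil; then the Möbius invariance of the cross ratio applies uniformly and no special-casing is needed. This is mostly routine once the projective framing is set up, so I expect the only real care to be in justifying that the perspectivity is a well-defined projective isomorphism of the two transversals even when points at infinity are involved.
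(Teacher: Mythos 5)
Your proof is correct, but note that the paper itself offers no proof of Proposition \ref{cross-ratio-line}: it is quoted as a classical fact (the section's preamble says proofs are included only for less known results), and the adjacent Remark records the trigonometric formula $(a,b,c,d)=\frac{\sin \sphericalangle(a,c)\,\sin\sphericalangle(b,d)}{\sin\sphericalangle(b,c)\,\sin\sphericalangle(a,d)}$, which points toward the classical metric argument (law of sines on each transversal, so that the side-length ratios reduce to sine ratios independent of the transversal). Your route is genuinely different and arguably better suited to this paper's setting: you parametrize the pencil through $P$ by the slope $m$, observe that the intersection of the line $y=mx$ with a transversal $(x_0+tv_1,\,y_0+tv_2)$ has parameter $t=(mx_0-y_0)/(v_2-mv_1)$, a fractional-linear function of $m$ whose determinant $x_0v_2-y_0v_1$ is nonzero precisely because the transversal avoids $P$, and then invoke invariance of the cross ratio under a common M\"obius substitution. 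This makes both transversal cross ratios equal to the cross ratio of the slopes, i.e.\ to the intrinsic cross ratio on the pencil, with points at infinity absorbed by passing to homogeneous coordinates. What your approach buys is that it is purely projective and works verbatim over $\C P^2$ --- relevant here, since the paper explicitly works in the complex projective plane, where the sine formula of the Remark has no direct meaning --- whereas the trigonometric route is shorter but confined to the real metric picture. The one point worth making explicit in a final write-up is the nondegeneracy check above (transversal not through $P$ implies the M\"obius map is invertible), since that is exactly where the hypothesis ``any line not in the pencil'' enters.
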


\begin{figure}[h]
\centering
\includegraphics[height=3.5cm]{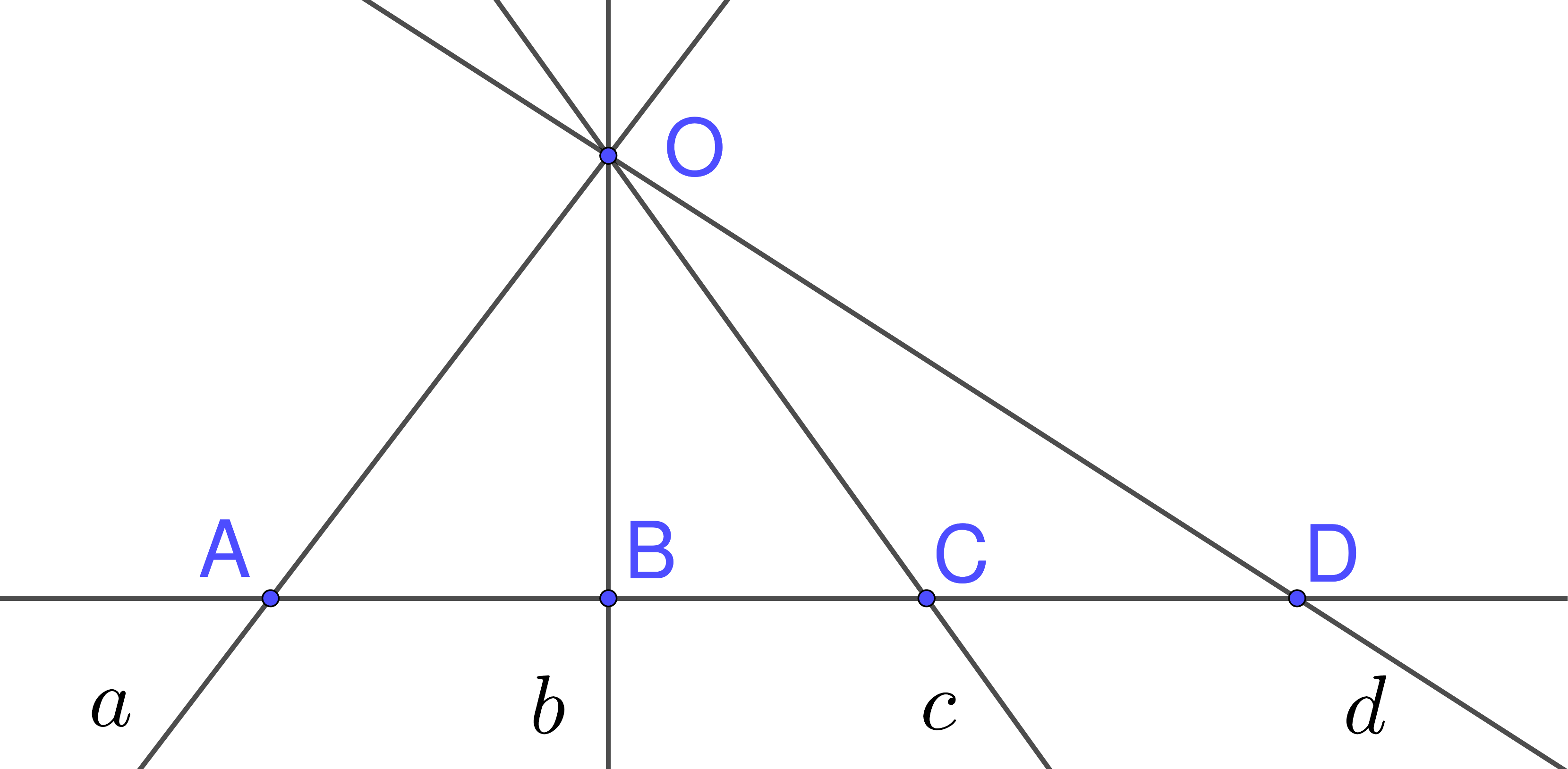}

\caption{Cross ratio of four lines}
\label{birapport-droites}
\end{figure}

\begin{Remark}
This proposition gives the fundamental invariance of the cross ratio under elementary projective transformations. 
The cross ratio of four concurrent lines can be expressed using the angles between the lines. Using the notation $\sphericalangle(a,b)$ for the angle between lines $a$ and $b$, we have $$(a,b,c,d) = \frac{\sin \sphericalangle(a,c) \;\; \sin \sphericalangle (b,d)}{\sin \sphericalangle (b,c) \;\; \sin \sphericalangle(a,d)}.$$
\end{Remark}

Conics have the following fundamental property concerning the cross ratio:
\begin{prop}\label{cross-ratio-conic}
Let $\mc{C}$ be a conic and $A, B, C, D$ be four points on it (see Figure \ref{birapport-on-conic}). Then for any point $P$ on $\mc{C}$, the cross ratio $(PA,PB,PC,PD)$ is independent of $P$.
\end{prop}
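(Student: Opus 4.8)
The plan is to reduce to a parametrized model of the conic and to show that, from the viewpoint of a point $P\in\mc{C}$, the conic is seen ``projectively''. Since all non-degenerate conics in $\C P^2$ are projectively equivalent and projective transformations preserve both conics and the cross ratio of a pencil of lines, I may assume that $\mc{C}$ is the standard conic, the image of the map $\phi\colon \C P^1\to\C P^2$, $\phi([\tau_0:\tau_1])=[\tau_0^2:\tau_0\tau_1:\tau_1^2]$, which in the affine chart $\tau_0=1$ reads $t\mapsto[1:t:t^2]$. Writing $P=\phi(p)$ and $X=\phi(t)$, the whole statement will follow from one claim: the map sending the parameter $t$ to the line $PX$ is a projective isomorphism from $\C P^1$ onto the pencil of lines through $P$. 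Granting this, the cross ratio of the four lines $PA,PB,PC,PD$ equals the cross ratio of the four parameters $t_A,t_B,t_C,t_D$ (projectivities preserve the cross ratio), and the latter does not involve $P$ at all.

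To prove the claim I would simply compute the line $PX$. A point $[x_0:x_1:x_2]$ lies on it exactly when
\begin{equation*}
\det\begin{pmatrix} x_0 & x_1 & x_2\\ 1 & p & p^2\\ 1 & t & t^2\end{pmatrix}=0,
\end{equation*}
and expanding along the first row this determinant factors as $(t-p)\bigl(pt\,x_0-(p+t)x_1+x_2\bigr)$. For $t\neq p$ the line $PX$ therefore has coefficients $(pt,-(p+t),1)$. Re-homogenizing $t=\tau_1/\tau_0$ and clearing denominators turns these into $(p\tau_1,\,-(p\tau_0+\tau_1),\,\tau_0)=\tau_0\,(0,-p,1)+\tau_1\,(p,-1,0)$, which is manifestly linear in $[\tau_0:\tau_1]$. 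The two vectors $(0,-p,1)$ and $(p,-1,0)$ are independent for every $p$, so the assignment $[\tau_0:\tau_1]\mapsto PX$ is a genuine projective isomorphism onto the pencil of lines through $P$. This is exactly the claim, so $(PA,PB,PC,PD)=(t_A,t_B,t_C,t_D)$ is independent of $P$.

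The step that needs the most care is the degeneration at $t=p$, where the common factor $(t-p)$ makes the determinant vanish identically: here the line $PX$ must be read as the tangent to $\mc{C}$ at $P$, and one has to check that it fits the linear parametrization above as the limit $t\to p$ (equivalently, that projection from $P$ is a bijection between $\mc{C}$ and the pencil, each line meeting $\mc{C}$ in one further point and the tangent accounting for the double contact). One should also record that the reduction to the normal form is legitimate, i.e. that every smooth conic is the image of some such $\phi$ and that the chosen projective transformation carries $A,B,C,D,P$ to the corresponding parameter points. A purely synthetic alternative is to argue directly, following Steiner, that $X\mapsto(PX,P'X)$ is a projectivity between the pencils through two points $P,P'$ of $\mc{C}$, but the computation with the rational normal parametrization seems the most self-contained, especially as it identifies the invariant explicitly with the cross ratio of the parameters.
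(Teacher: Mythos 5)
Your proof is correct, but it takes a genuinely different route from the paper's. The paper also begins by normalizing with a projective transformation, but it reduces to a \emph{circle} rather than to the rational normal curve, and then invokes the inscribed angle theorem: the angles between $PA,PB,PC,PD$ do not depend on $P$, hence neither does the cross ratio of the four lines. Your computation with $\phi([\tau_0:\tau_1])=[\tau_0^2:\tau_0\tau_1:\tau_1^2]$ replaces this Euclidean input by pure linear algebra, and it buys two things the paper's argument does not: it works verbatim over $\C$ (the angle property is a fact about real circles, so the paper's reduction is slightly informal in $\C P^2$), and it identifies the common value explicitly as the cross ratio of the parameters $t_A,t_B,t_C,t_D$, i.e.\ it exhibits the projective isomorphism $\mc{C}\cong\C P^1$ that underlies the notation $(A,B,C,D)_{\mc{C}}$ used throughout the paper. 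Your factorization $(t-p)\bigl(pt\,x_0-(p+t)x_1+x_2\bigr)$ is right, and setting $t=p$ in the second factor gives the coefficients $(p^2,-2p,1)$, which is exactly the tangent to $x_1^2-x_0x_2=0$ at $[1:p:p^2]$, so the degeneration you flag closes precisely as you describe: the linear parametrization of the pencil already incorporates the tangent as the image of $t=p$, with no limit argument needed. One small point you should also record: writing $P=\phi(p)$ with $p$ affine omits the point $P=[0:0:1]$; either homogenize in $p$ as well --- the line coefficients become $\bigl(\pi_1\tau_1,\,-(\pi_1\tau_0+\pi_0\tau_1),\,\pi_0\tau_0\bigr)$, still linear in $[\tau_0:\tau_1]$ for every $[\pi_0:\pi_1]$ --- or observe that the initial normalization lets you place $P$ in the affine chart. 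With that remark added, the argument is complete and, if anything, stronger than the paper's, since it proves the identity $(PA,PB,PC,PD)=(t_A,t_B,t_C,t_D)$ rather than bare independence of $P$; this identification is essentially what the paper later needs anyway (e.g.\ in Theorem \ref{thm4}).
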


\vspace{-0.2cm}
\begin{figure}[h]
\centering
\includegraphics[height=3.5cm]{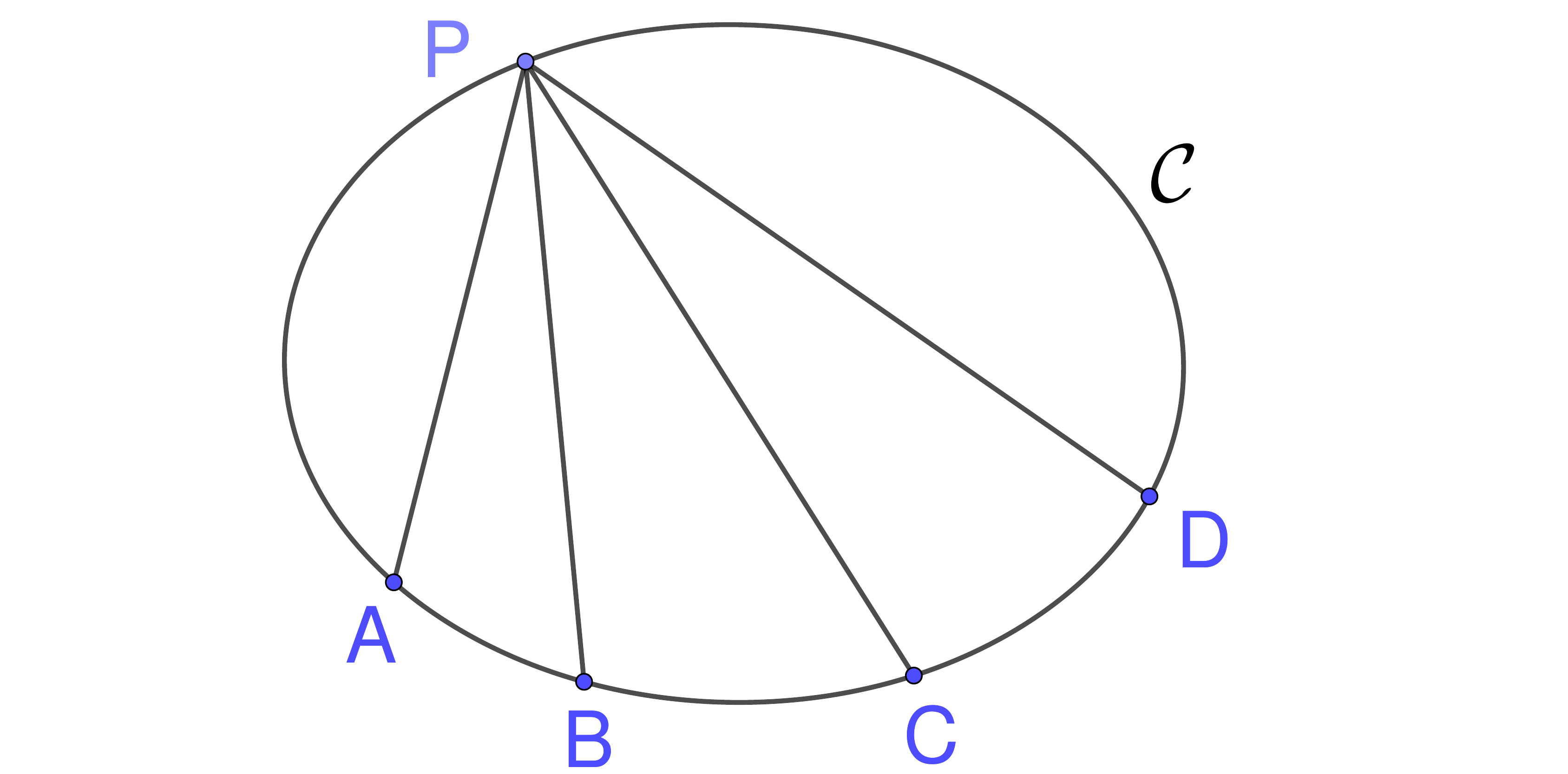}

\vspace{-0.2cm}
\caption{Cross ratio on a conic}
\label{birapport-on-conic}
\end{figure}

We denote this cross ratio of four points on a conic by $(A,B,C,D)_{\mc{C}}$. The previous proposition can actually be seen as a definition of a conic from five given points $A, B, C, D$ and $P$: the set of points $Q$ such that $(QA,QB,QC,QD) = (PA,PB,PC,PD)$.

\begin{wrapfigure}{r}{0.5\textwidth}
\centering
   \includegraphics[width=0.37\textwidth]{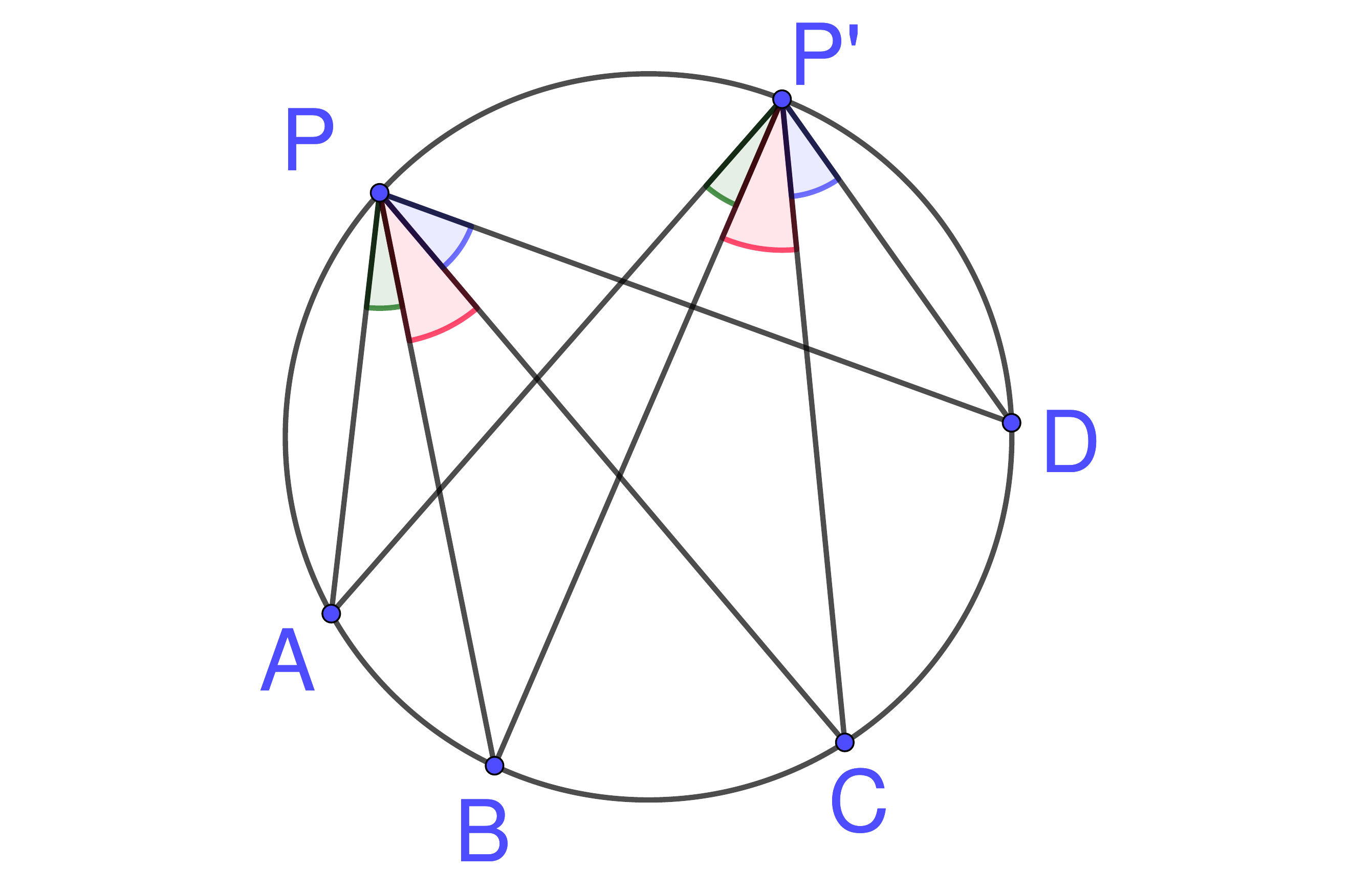}
 
\vspace{0.4cm}
\caption{Angle equalities on the circle}	
\label{angles-circle}
\vspace{-0.5cm}
\end{wrapfigure}

\vspace{0.3cm}
\textit{Proof.} Among the multiple equivalent definitions of a conic, take the following one: a conic is the image of a circle under a projective transformation.

For a circle, we know that the cross ratio $(PA,PB,PC,PD)$ is independent of $P$ since all the angles between the four lines $PA, PB, PC$ and $PD$ are independent of $P$ (angle property of the circle, see Figure \ref{angles-circle}). Thus, since the proposition is only in terms of projective quantities, it remains true for a general conic by applying a projective transformation.
\hfill $\qed$

%

\vspace{0.4cm}
The converse direction is also true, which explains the ubiquitous appearance of conics in geo\-metry:
\begin{prop}\label{Chasles-Steiner-prop}
Let $\alpha$ be a projective transformation between two pencils of lines with base points $P$ and $Q$ respectively. Then the intersection of a line $d$ through $P$ with $\alpha(d)$ describes a conic (when $d$ varies) passing through $P$ and $Q$.
\end{prop}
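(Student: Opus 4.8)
The plan is to deduce the statement from its converse, Proposition \ref{cross-ratio-conic}, by exhibiting an explicit conic and then \emph{identifying} the given projectivity $\alpha$ with the projectivity that this conic carries naturally. The whole argument rests on one rigidity fact about projective lines: a projectivity between two pencils of lines is determined by its values on three distinct lines, since each pencil is a copy of $\mathbb{P}^1$ on which a projectivity is fixed by three points.

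First I would choose three lines $d_1, d_2, d_3$ through $P$ in general position, set $e_i = \alpha(d_i)$, and form the points $M_i = d_i \cap e_i$ of the prospective locus. In the non-degenerate situation (which amounts to requiring $\alpha(PQ) \neq PQ$, i.e.\ the common line $PQ$ is not fixed) the five points $P, Q, M_1, M_2, M_3$ are in general position, hence determine a unique conic $\mc{C}$. Next, Proposition \ref{cross-ratio-conic} tells us that the assignment $\beta : PX \mapsto QX$ for $X \in \mc{C}$ preserves the cross ratio $(\cdot,\cdot,\cdot,\cdot)_{\mc{C}}$, so $\beta$ is itself a projectivity between the pencils at $P$ and $Q$. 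Because each $M_i$ lies on $\mc{C}$ as well as on $d_i = PM_i$ and $e_i = QM_i$, we obtain $\beta(d_i) = QM_i = e_i = \alpha(d_i)$ for $i = 1,2,3$. By the rigidity fact, $\alpha = \beta$.

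It then remains to recognize the locus of $\beta$ as $\mc{C}$ itself. For a generic line $d$ through $P$, write $d \cap \mc{C} = \{P, X\}$; then $d = PX$ and $\beta(d) = QX$, so $d \cap \beta(d) = PX \cap QX = X \in \mc{C}$. As $d$ sweeps the pencil, $X$ sweeps all of $\mc{C}$, and the two exceptional base points are recovered from the tangent directions: taking $d = PQ$ gives $\beta(d)$ equal to the tangent to $\mc{C}$ at $Q$, whose intersection with $PQ$ is $Q$; taking $d$ to be the tangent to $\mc{C}$ at $P$ gives $\beta(d) = QP$, meeting $d$ at $P$. Hence both $P$ and $Q$ lie on the locus, and since $\alpha = \beta$ the locus described in the statement is exactly the conic $\mc{C}$ through $P$ and $Q$.

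The step I expect to require the most care is not any computation but the handling of degeneracies and base points: one must confirm that the five chosen points are genuinely in general position, verify (via the tangent lines above) that $P$ and $Q$ really belong to the locus rather than being spurious, and note that the excluded case $\alpha(PQ) = PQ$ is precisely where the locus collapses to a line or a pair of lines. This matches the convention announced in the introduction of treating the degenerate pencils as limit cases. (A purely computational alternative would parametrize the two pencils by $t$ and $s = \alpha(t)$ with $\alpha$ a Möbius map, write each line in dual coordinates as linear in its parameter, and observe that the intersection point $u(t)\times v(s(t))$ is, after clearing denominators, quadratic in $t$, hence traces a conic; but the synthetic route above stays closer to the tools already developed.)
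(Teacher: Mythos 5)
Your proposal is correct and follows essentially the same route as the paper's own proof: determine the projectivity by three lines, pass the resulting three intersection points together with $P$ and $Q$ through a unique conic, and identify $\alpha$ with the projectivity $PX \mapsto QX$ furnished by Proposition \ref{cross-ratio-conic}. Your treatment is simply a more careful expansion of that sketch, with welcome added attention to the general-position requirement, the tangent lines recovering $P$ and $Q$, and the degenerate case $\alpha(PQ)=PQ$.
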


\begin{figure}[h]
\centering
\includegraphics[height=3.1cm]{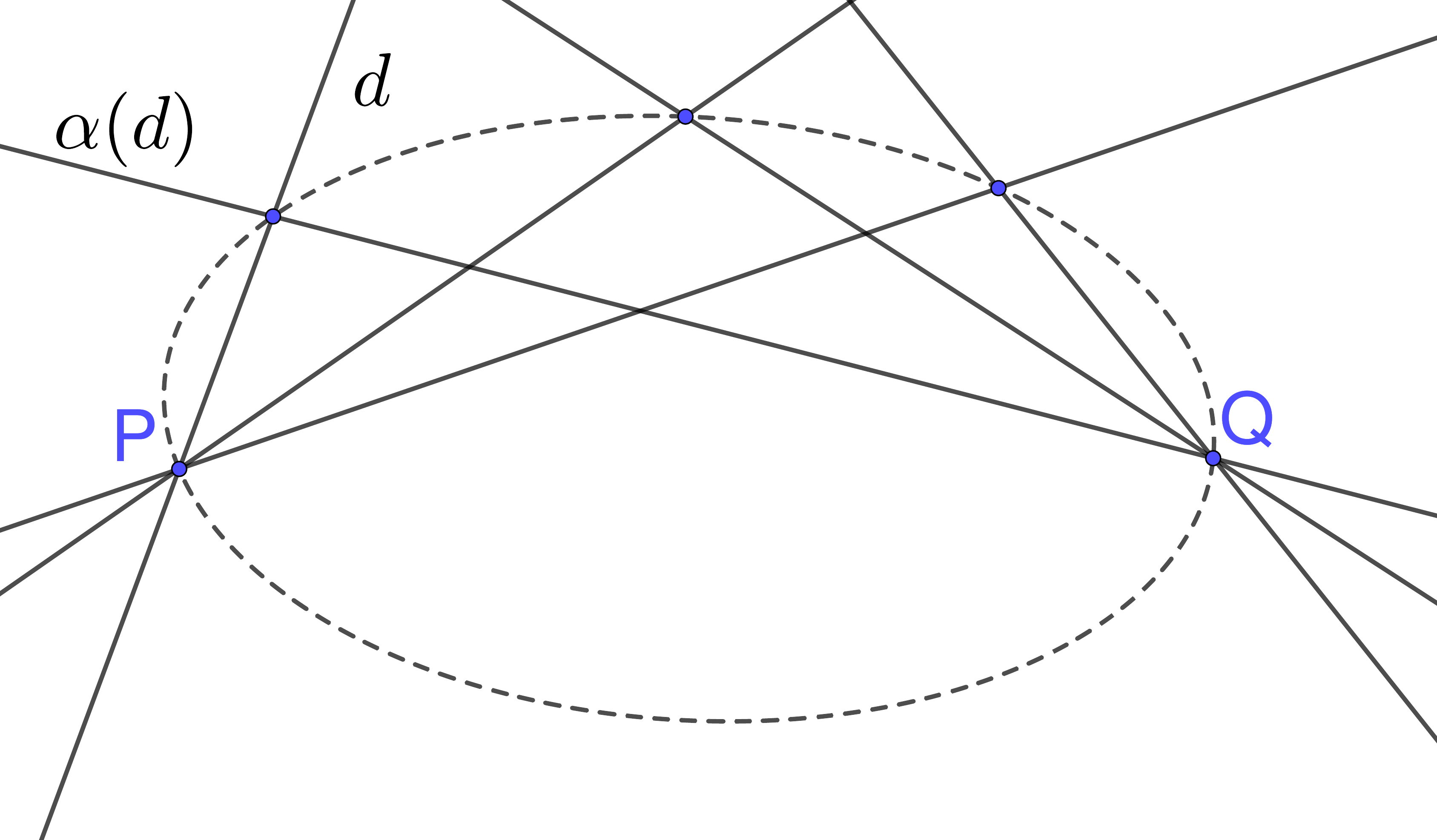}

\vspace{-0.1cm}
\caption{Chasles-Steiner theorem}
\label{Chasles-Steiner}
\end{figure}



The proof is simply the fact that a projective transformation between two pencils of lines is uniquely determined by the image of three lines. This gives three intersection points, and together with $P$ and $Q$ this gives a unique conic, so we can conclude with the previous Proposition \ref{cross-ratio-conic}.

Both Proposition \ref{cross-ratio-conic} and its converse \ref{Chasles-Steiner-prop} are known as the theorem of Chasles-Steiner (see Theorem 5.1 and 5.2 in \cite{Sidler}).

\subsection{Polars}

For a point $P$ in the plane and a given conic $\mc{C}$, we define the \textbf{polar} of $P$ with respect to $\mc{C}$ to be the line defined by all points $Q$ such that denoting by $X$ and $Y$ the intersection of $PQ$ with $\mc{C}$ we have $(P,Q,X,Y)=-1$ (see Figure \ref{polar-def}). The point $P$ is called the \textbf{pole} of its polar.

\begin{figure}[h]
\centering
\includegraphics[height=3cm]{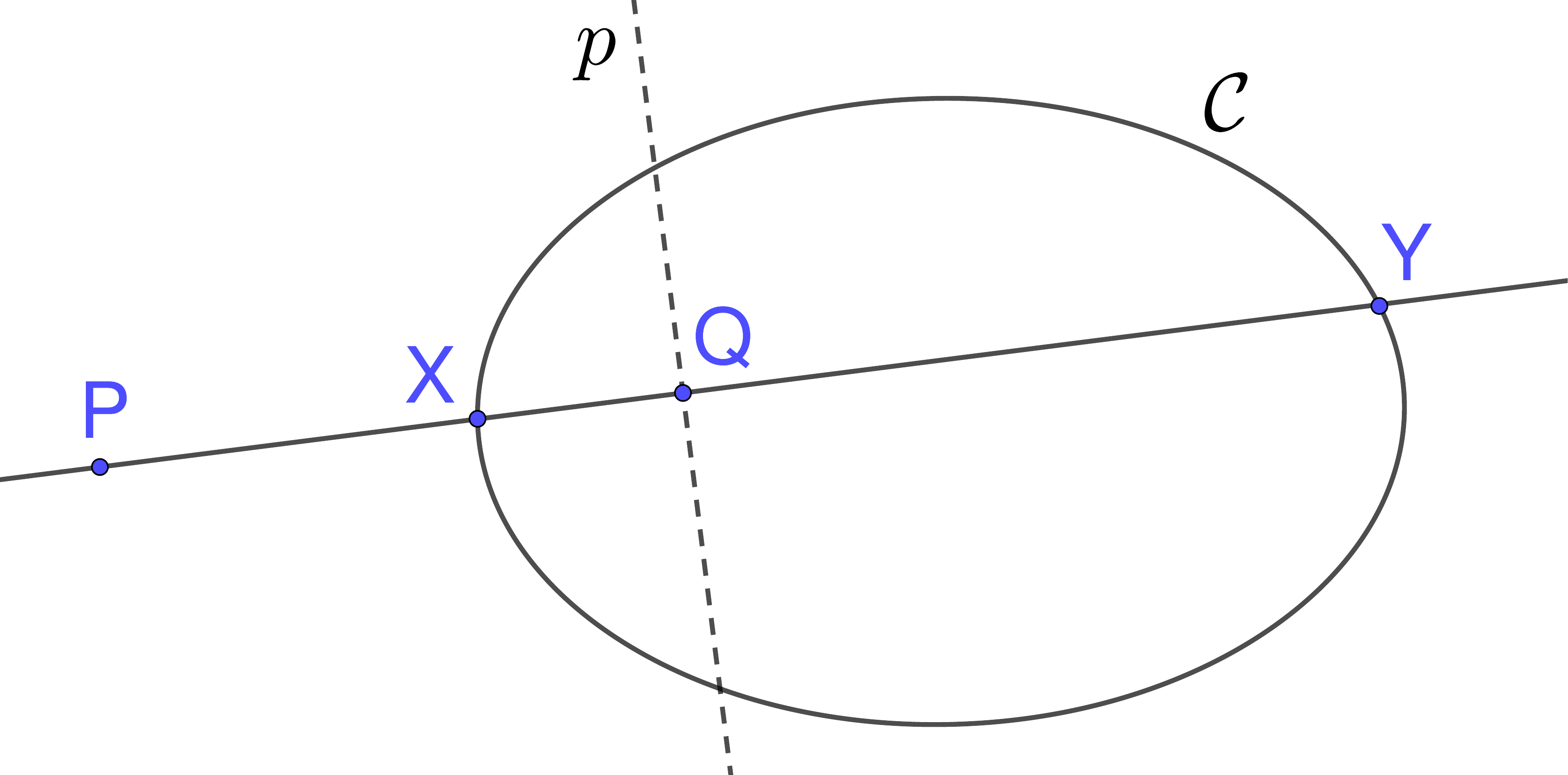}

\caption{Definition of the polar}
\label{polar-def}
\end{figure}

Geometrically, there is the following simple construction of the polar: take any two lines through $P$, each intersecting the conic in two points. Denote these points by $A, B, C, D$, see Figure \ref{polar}. Let $R$ be the intersection of $AC$ and $BD$ and $S$ be the intersection of $AD$ and $BC$. Then the polar is given by $RS$. In particular if $P$ is outside the conic, then the polar is given by the line through the two tangent points.

\begin{figure}[h]
\centering
\includegraphics[height=3.6cm]{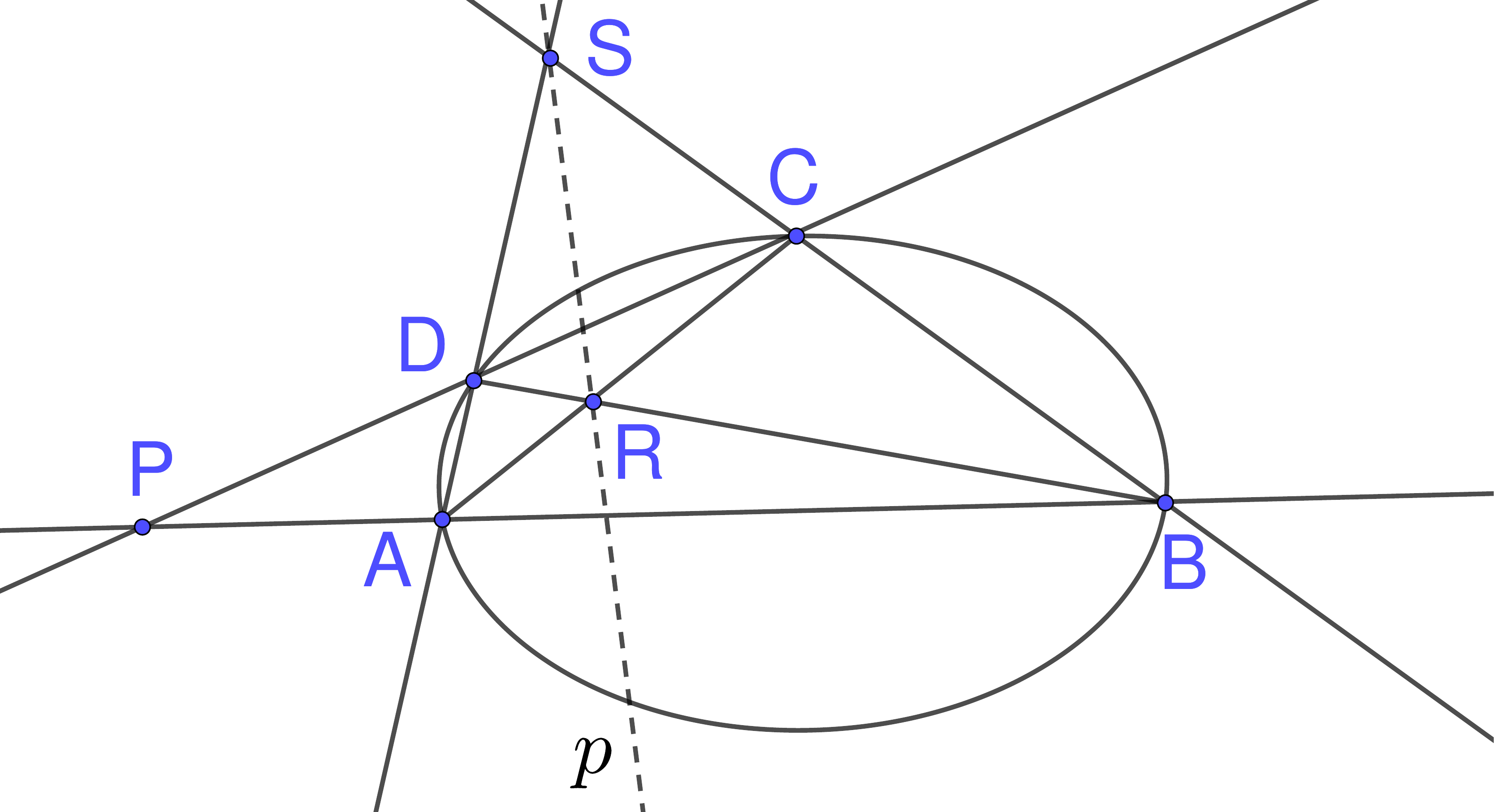}
\hspace{0.5cm}
\includegraphics[height=3.6cm]{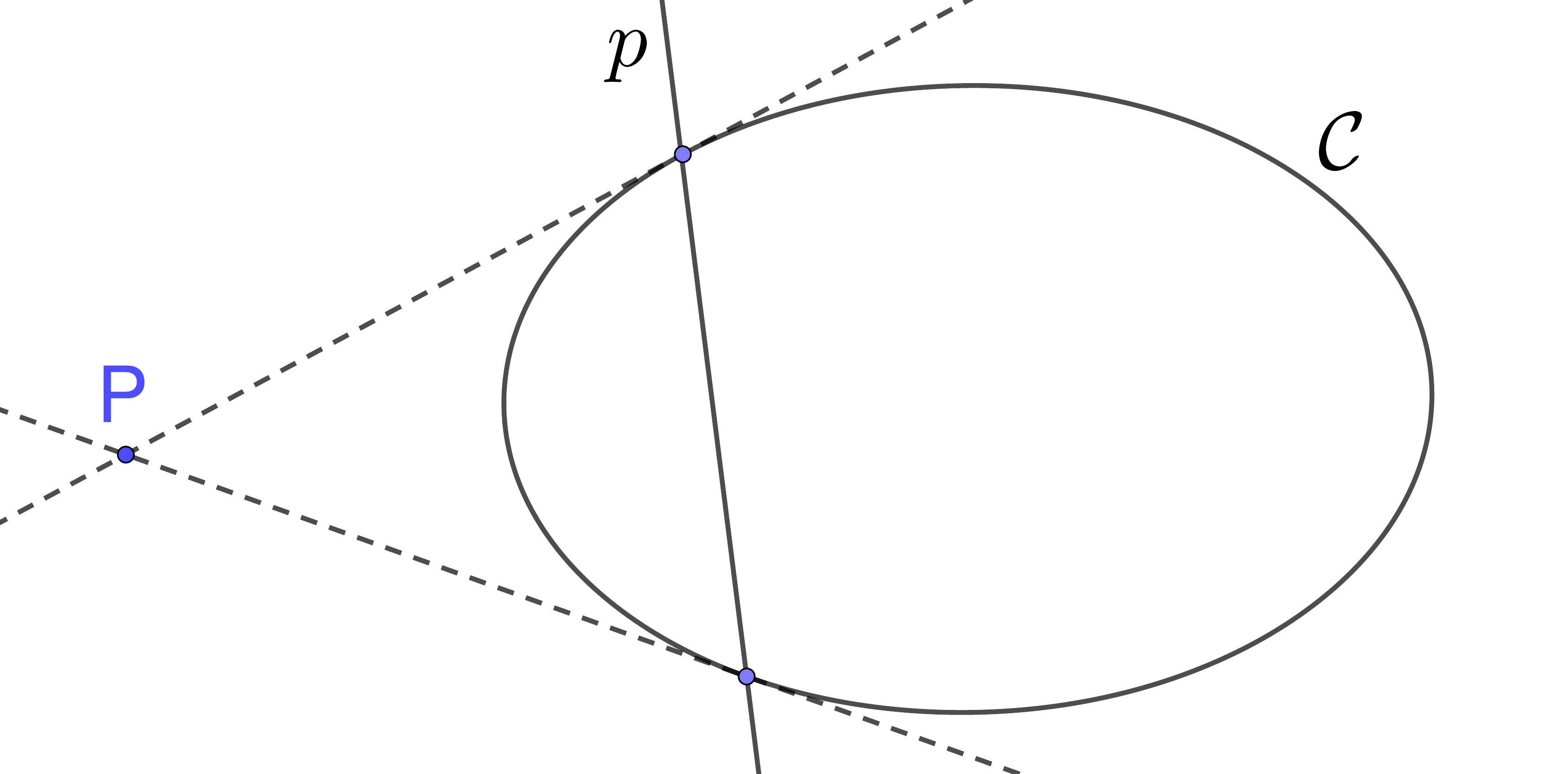}

\caption{Construction of the polar}
\label{polar}
\end{figure}


There is a duality between poles and polars in the following sense: a point $B$ lies on the polar of $A$ iff $A$ lies on the polar of $B$. This comes directly from the definition.

The duality between poles and polars is projective:

\begin{prop}\label{polar-duality}
Given four points $A, B, C, D$ on a line and a conic $\mc{C}$, then the four polars $p_A, p_B, p_C, p_D$ are concurrent and $(A,B,C,D)=(p_A,p_B,p_C,p_D)$.
\end{prop}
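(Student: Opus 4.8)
The plan is to prove the two assertions separately — first the concurrency, then the equality of cross ratios — using that the polarity attached to a (nondegenerate) conic is a projective duality.

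For the concurrency I would let $\ell$ denote the line carrying $A,B,C,D$ and let $P$ be the pole of $\ell$ with respect to $\mc{C}$, which exists and is unique since the pencil, hence the conic, is nondegenerate. By definition the polar of $P$ is $\ell$. Now each of the four points lies on $\ell$, i.e. on the polar of $P$; by the pole--polar reciprocity recalled just above (a point lies on the polar of another iff the second lies on the polar of the first), $P$ lies on each of $p_A, p_B, p_C, p_D$. Hence the four polars all pass through the single point $P$ and are concurrent.

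For the cross-ratio identity the key point is that taking polars is a projective (linear) map. I would represent $\mc{C}$ by a symmetric matrix $M$ in homogeneous coordinates, so that the conic is $\{[v] : v^\top M v = 0\}$ and the polar of a point $[v]$ is the line with dual coordinates $[Mv]$. Nondegeneracy means $M$ is invertible, so $\phi : [v] \mapsto [Mv]$ is a projective isomorphism from $\C P^2$ onto its dual plane $(\C P^2)^*$. Restricting $\phi$ to $\ell$ gives a projectivity from $\ell$ onto the image line $\phi(\ell)$ in the dual plane, whose points are exactly the polars $p_X$ of points $X \in \ell$, i.e. the lines of the pencil through $P$. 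Since a projective isomorphism preserves the cross ratio, $(A,B,C,D)$ equals the cross ratio of the four image points on $\phi(\ell)$, which by the point--line duality (cf. Proposition \ref{cross-ratio-line}) is precisely $(p_A,p_B,p_C,p_D)$.

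The step I expect to require the most care is this last identification: verifying that the cross ratio computed for the four image points on the dual line $\phi(\ell)$ agrees with the cross ratio of the four lines $p_A,\dots,p_D$ regarded as a pencil at $P$, and that $\phi$ genuinely sends the projective line $\ell$ to the dual line representing that pencil. An alternative, coordinate-free route would invoke the projective naturality of both polarity and the cross ratio to reduce to the case where $\mc{C}$ is a circle, but this still rests on the same duality, so the bilinear-form computation seems the most economical.
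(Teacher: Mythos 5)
Your proof is correct, and it splits the same way as the paper's only on the first half: for concurrency both arguments take $P$ to be the pole of the line carrying $A,B,C,D$ and invoke pole--polar reciprocity, so that part is identical. For the cross-ratio identity your route is genuinely different. The paper stays synthetic: writing $A'$ for the intersection of $p_A$ with the line $d$ of the four points, and $R,S$ for the intersections of $d$ with $\mc{C}$, the defining harmonic relation $(A,A',R,S)=-1$ exhibits $A\mapsto A'$ as the involution of $d$ with fixed points $R$ and $S$, hence projective, and composing with the projectivity $A'\mapsto PA'=p_A$ of Proposition \ref{cross-ratio-line} gives the claim. You instead realize the polarity as the projective isomorphism $\phi:[v]\mapsto [Mv]$ onto the dual plane and restrict it to the line. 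Your version is more economical and more robust: it proves in one stroke that polarity preserves every projective invariant, and over $\R$ it does not require the line to meet the conic, a point the paper has to sidestep by working in $\C P^2$ (the involution argument needs the fixed points $R,S$, which may be complex). What your route costs is one standard verification you currently take as a definition: the paper defines the polar by the harmonic condition $(P,Q,X,Y)=-1$, so you must check that this harmonic-conjugate polar coincides with the algebraic polar $\{[w]: w^\top M v=0\}$ — without that identification your argument proves the proposition for a possibly different notion of polar. The final step you flagged is sound and closes exactly as you suggest via Proposition \ref{cross-ratio-line}: intersecting with a transversal not through $P$ is a projectivity from the dual line $\phi(\ell)$ (the pencil through $P$) to the transversal, which identifies the dual-plane cross ratio with the cross ratio of the four lines. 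One slip of wording: no pencil of conics appears in this proposition; nondegeneracy of the single conic $\mc{C}$ is what makes $M$ invertible.
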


\begin{figure}[h]
\centering
\includegraphics[height=3.6cm]{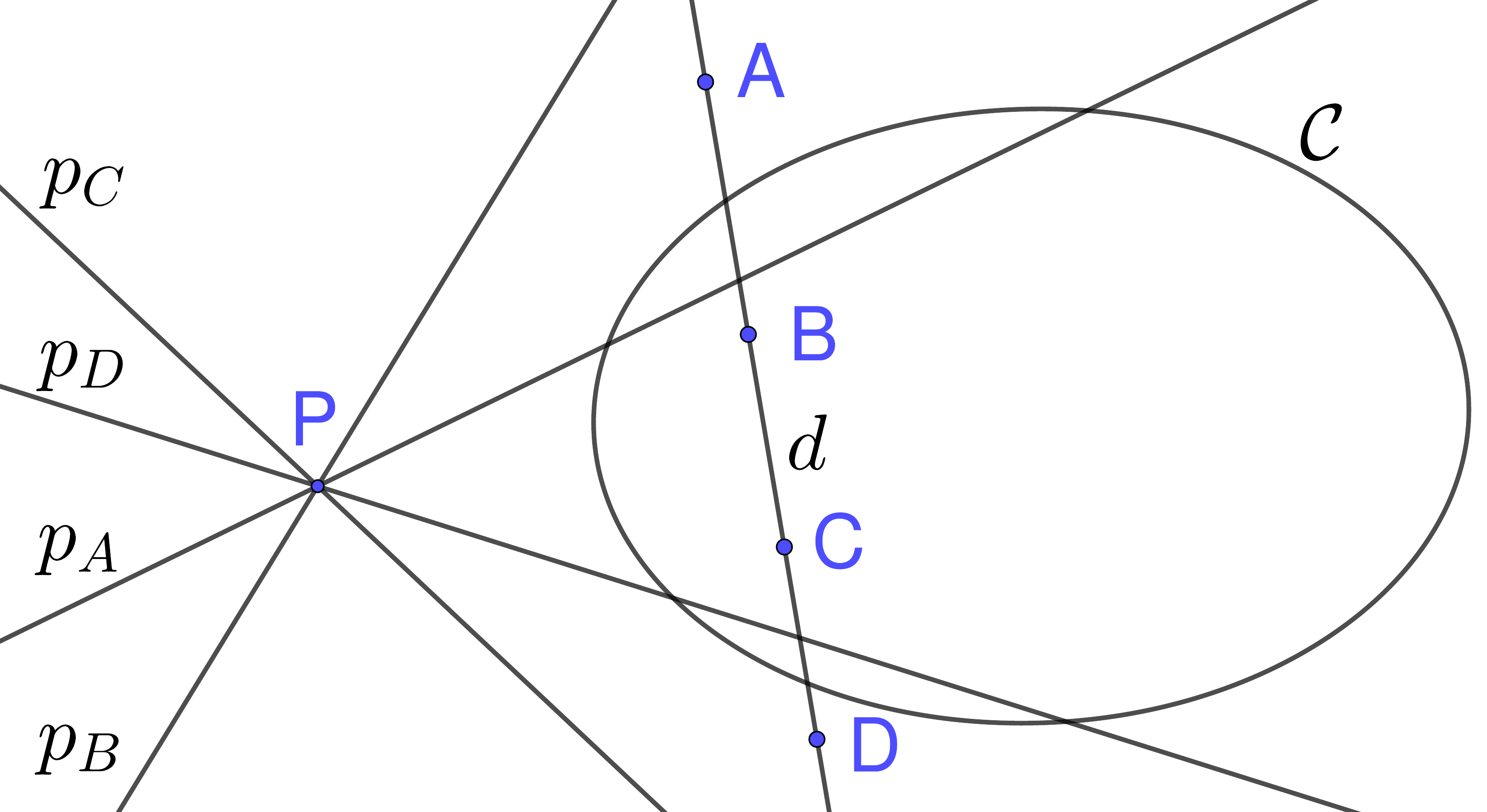}

\caption{Pole-polar duality}
\label{pole-polar}
\end{figure}

\begin{wrapfigure}{r}{0.46\textwidth}
   \includegraphics[width=0.43\textwidth]{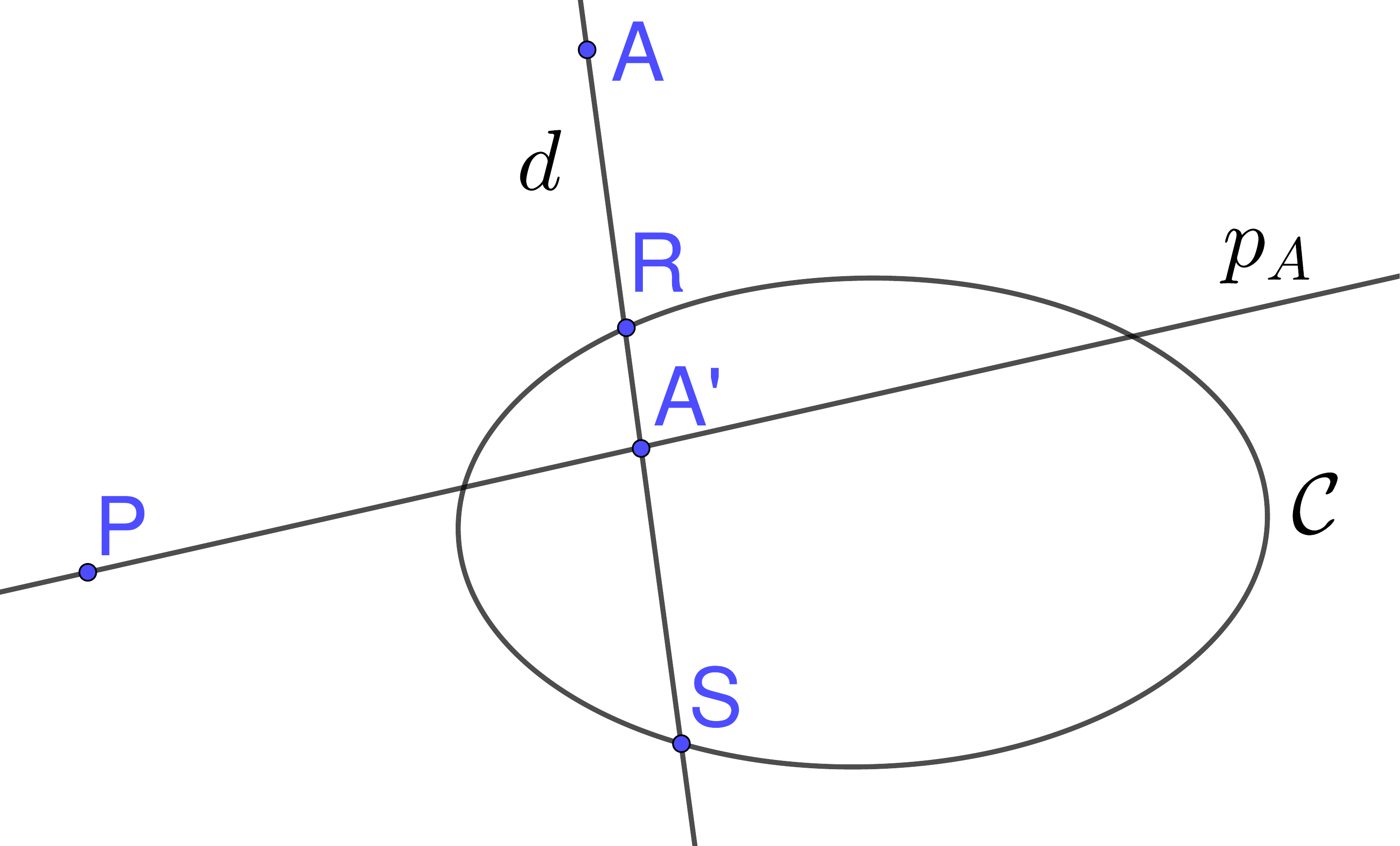}
 
\vspace{0.5cm}
\caption{Pole-polar duality is projective}
\label{polar-proof}
\end{wrapfigure}

\vspace{0.3cm}
\textit{Proof.}
Denote by $d$ the line of the four points $A, B, C, D$ and by $P$ the pole of $d$ (see Figure \ref{polar-proof}). All polars $p_A, p_B, p_C$ and $p_D$ go through $P$, so they are concurrent.

Denote by $R$ and $S$ the intersection of $d$ with the conic $\mc{C}$ (there is always such an intersection if we work in $\C P^2$) and by $A'$ the intersection of $p_A$ and $d$. By definition we have $(A,A',R,S) = -1$, so the map $A \mapsto A'$ is projective (it is the involution on $d$ with fixed points $R$ and $S$).
Finally, the map $A' \mapsto PA'$ is projective (by proposition \ref{cross-ratio-line}). Since $PA'$ is nothing but $p_A$, we see by composition that $A \mapsto p_A$ is projective.
\hfill $\qed$
\vspace{0.5cm}



The dual viewpoint gives the notion of a cross ratio of four tangent lines to a given conic $\mc{C}$ (dual concept of $(A,B,C,D)_{\mc{C}}$). Both notions are linked as follows:

\begin{prop}
Consider four points $A, B, C, D$ on a conic $\mc{C}$ and denote by $t_A, t_B, t_C, t_D$ the four tangents to $\mc{C}$ at these points. Then $$(A,B,C,D)_{\mc{C}}=(t_A,t_B,t_C,t_D)_{\mc{C}}.$$
\end{prop}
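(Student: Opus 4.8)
The plan is to reduce the statement to the pole--polar duality of Proposition \ref{polar-duality} by computing both cross ratios through one common auxiliary point. First I would make the dual cross ratio explicit. By duality the quantity $(t_A,t_B,t_C,t_D)_{\mc{C}}$ is computed by choosing any fifth tangent line $t$ to $\mc{C}$ and taking the cross ratio of the four intersection points $t\cap t_A,\, t\cap t_B,\, t\cap t_C,\, t\cap t_D$; this is exactly the dual of Proposition \ref{cross-ratio-conic}, hence independent of $t$. The key idea is to take this fifth tangent to be the tangent $t_P$ at a fifth point $P$ of $\mc{C}$ (distinct from $A,B,C,D$).

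Next I would identify these four intersection points geometrically. The intersection $t_X\cap t_P$ of the tangents at two points $X$ and $P$ of $\mc{C}$ is precisely the pole of the chord $XP$: since $X$ lies on the line $XP$, pole--polar duality forces the pole of $XP$ to lie on the polar of $X$, namely $t_X$, and likewise on $t_P$, so it is their intersection. Consequently the four points cutting out $(t_A,t_B,t_C,t_D)_{\mc{C}}$ on $t_P$ are the poles of the four chords $PA,\,PB,\,PC,\,PD$.

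Now I would invoke Proposition \ref{polar-duality} directly. Viewing the four poles as four points lying on the single line $t_P$, their respective polars are exactly the chords $PA,\,PB,\,PC,\,PD$ (the polar of the pole of a line is that line). Proposition \ref{polar-duality} then gives that the cross ratio of the four poles equals the cross ratio of the four concurrent lines $PA,\,PB,\,PC,\,PD$ through $P$. Finally, by Proposition \ref{cross-ratio-conic} this last cross ratio is by definition $(A,B,C,D)_{\mc{C}}$, closing the chain of equalities $(t_A,t_B,t_C,t_D)_{\mc{C}}=(A,B,C,D)_{\mc{C}}$.

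The only delicate point is the first step: correctly transporting the definition of the dual cross ratio of tangent lines back into the primal plane and recognizing that the relevant intersection points are poles of chords. Once this dictionary is in place, the conclusion is immediate from Proposition \ref{polar-duality} and requires no case analysis, the degenerate configurations being handled as limits as announced in the introduction.
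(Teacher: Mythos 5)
Your proposal is correct and follows essentially the same route as the paper: a fifth point $P$, the tangent $t_P$, the intersection points $t_P\cap t_X$, then Proposition \ref{polar-duality} followed by Proposition \ref{cross-ratio-conic}. The only difference is that you make explicit the step the paper leaves implicit, namely that $t_X\cap t_P$ is the pole of the chord $XP$, which is a welcome clarification but not a different argument.
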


\begin{figure}[h]
\centering
\includegraphics[height=4cm]{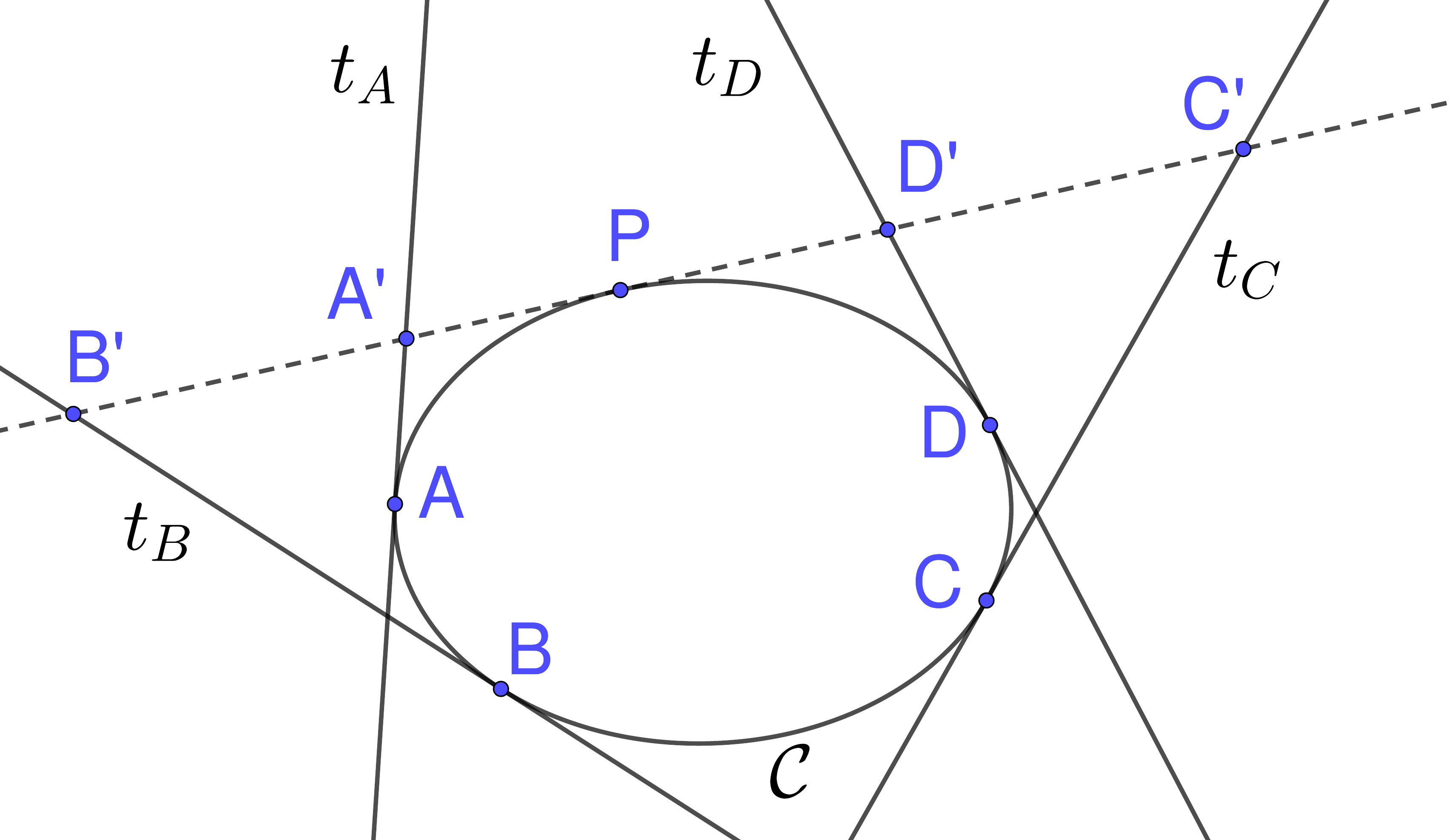}

\caption{Tangents to a conic}
\label{conic-tangents}
\end{figure}

\begin{proof}
Take a fifth point $P$ on $\mc{C}$, see Figure \ref{conic-tangents}. Denote by $A'$ the intersection of $t_A$ with $t_P$ and analogously for $B', C'$ and $D'$. By definition we have $(t_A,t_B,t_C,t_D)_{\mc{C}} = (A',B',C',D')$. Thus we get

\begin{align*}
(t_A,t_B,t_C,t_D)_{\mc{C}} &= (A',B',C',D') & \text{ by definition}\\
&= (PA,PB,PC,PD) & \text{ by Proposition \ref{polar-duality}} \\
&= (A,B,C,D)_{\mc{C}} & \text{ by definition}
\end{align*}
\end{proof}

Using Proposition \ref{cross-ratio-line}, the cross ratio on a conic can be projected onto a line:
\begin{prop}
Let $A, B, C, D$ and $P$ be points on a common conic $\mc{C}$ and $d$ be a line (see Figure \ref{conic-to-line}). The intersection of $PA$ with $d$ is denoted by $A'$ and analogously are defined $B', C'$ and $D'$. Then 
$$(A,B,C,D)_{\mc{C}} = (A',B',C',D').$$
\end{prop}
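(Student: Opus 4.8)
The plan is to recognize that the quantity $(A,B,C,D)_{\mc{C}}$ is, by its very definition following Proposition \ref{cross-ratio-conic}, nothing other than the cross ratio of the four concurrent lines $PA, PB, PC, PD$ in the pencil with base point $P$. Indeed, Proposition \ref{cross-ratio-conic} asserts that $(PA,PB,PC,PD)$ is independent of the chosen point $P$ on $\mc{C}$, and the notation $(A,B,C,D)_{\mc{C}}$ was introduced precisely to denote this common value. So the statement reduces to comparing the cross ratio of a pencil of lines with the cross ratio of its intersections with a transversal line $d$.

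The single key step is then a direct invocation of Proposition \ref{cross-ratio-line}. The four lines $PA, PB, PC, PD$ all pass through the common point $P$, hence form a pencil. The line $d$ meets them in the points $A', B', C', D'$ respectively, and Proposition \ref{cross-ratio-line} guarantees that
$$(PA,PB,PC,PD) = (A',B',C',D').$$
Combining this with the definitional identity $(A,B,C,D)_{\mc{C}} = (PA,PB,PC,PD)$ yields the claim at once.

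There is essentially no genuine obstacle here: the proposition is a one-line composition of two previously established facts. The only points requiring a word of care are the incidence hypotheses needed to apply Proposition \ref{cross-ratio-line}. One should note that $d$ must not belong to the pencil through $P$, i.e. $P \notin d$, so that the intersections $A', B', C', D'$ are well defined as four genuine points on $d$; this is the implicit standing assumption, and the finitely many degenerate configurations (for instance $d$ passing through $P$, or $d$ tangent to $\mc{C}$ at $P$) are handled as the limiting cases already flagged in the introduction. I would therefore present the argument compactly as the chain of equalities above, citing Proposition \ref{cross-ratio-conic} for the first identity and Proposition \ref{cross-ratio-line} for the second.
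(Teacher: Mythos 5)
Your proof is correct and is exactly the argument the paper intends: the statement is presented there as an immediate consequence of Proposition \ref{cross-ratio-line} applied to the pencil of lines through $P$, with $(A,B,C,D)_{\mc{C}} = (PA,PB,PC,PD)$ holding by definition via Proposition \ref{cross-ratio-conic}. Your extra remark on the degenerate incidences (e.g.\ $P \in d$) is a reasonable precision consistent with the paper's standing conventions.
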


\begin{figure}[h]
\centering
\includegraphics[height=4cm]{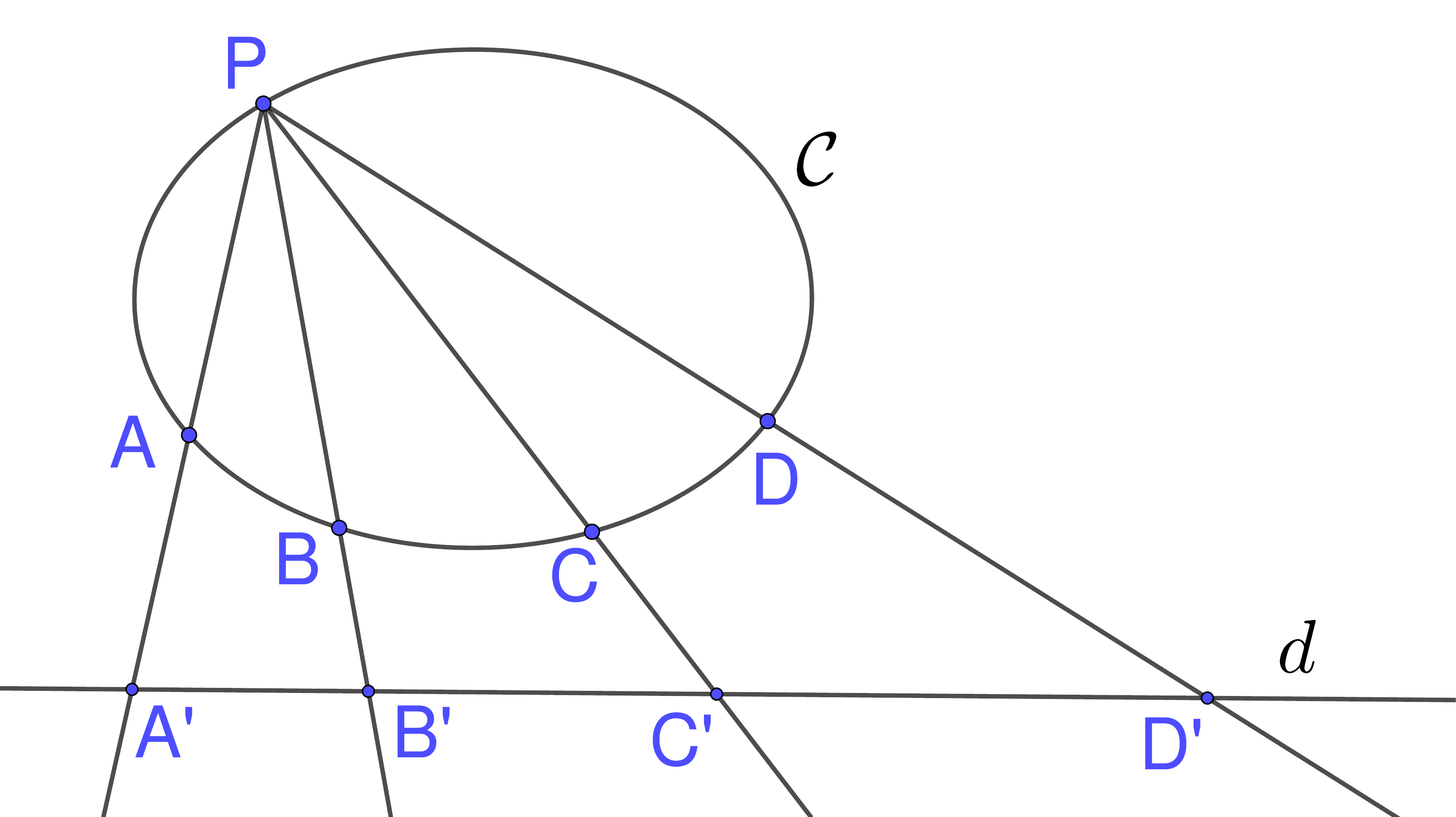}

\caption{Conic to line}
\label{conic-to-line}
\end{figure}

\medskip 
Every point in the plane determines an involution on a conic:
\begin{prop}\label{conic-to-conic-prop}
Let $P$ be a point in the plane. For a point $A$ on a conic $\mc{C}$, denote the second intersection of $PA$ with $\mc{C}$ by $A'$ (see Figure\ref{conic-to-conic}). Then the map $A\mapsto A'$ is projective, i.e. given four points $A, B, C, D$ on $\mc{C}$, we have
$$(A,B,C,D)_{\mc{C}}=(A',B',C',D')_{\mc{C}}.$$
\end{prop}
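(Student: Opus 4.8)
The plan is to show that the involution $A\mapsto A'$ preserves the cross ratio by projecting two cleverly chosen pencils onto a single well-chosen line, namely the polar of $P$, and then checking that the two resulting configurations of four points literally coincide. Throughout, write $p$ for the polar of $P$ with respect to $\mc{C}$.

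First I would compute each side from a vertex lying \emph{on} the conic, which is legitimate by Proposition \ref{cross-ratio-conic} (vertex independence). Taking $A'\in\mc{C}$ as the vertex for the left-hand side and $A\in\mc{C}$ as the vertex for the right-hand side gives
\begin{align*}
(A,B,C,D)_{\mc{C}} &= (A'A,\,A'B,\,A'C,\,A'D),\\
(A',B',C',D')_{\mc{C}} &= (AA',\,AB',\,AC',\,AD').
\end{align*}
Next I would intersect both pencils with the \emph{same} line $p$; by Proposition \ref{cross-ratio-line} this converts each cross ratio of concurrent lines into the cross ratio of the four intersection points on $p$. It therefore suffices to prove that the four points cut out on $p$ by the first pencil agree, one by one, with those cut out by the second.

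The identification of the points is the heart of the argument. The first two lines $A'A$ and $AA'$ are literally the same chord (the line through $P$, $A$, $A'$), so they meet $p$ in the same point. For the remaining three I would invoke the polar construction of Figure \ref{polar}: applied to the two chords $AA'$ and $BB'$ through $P$, it shows that the diagonal point $A'B\cap AB'$ lies on the polar $p$; hence $A'B$ and $AB'$ cut $p$ in one and the same point. Running the identical argument on the chord pairs $(AA',CC')$ and $(AA',DD')$ shows that $A'C,\,AC'$ meet $p$ in a common point and $A'D,\,AD'$ meet it in another. Thus both pencils project onto exactly the same four points of $p$, and the two cross ratios coincide, as desired.

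The step I expect to be the main obstacle is choosing the transversal correctly: the coincidence $A'B\cap p=AB'\cap p$ fails for an arbitrary line and holds precisely because $p$ is the polar of $P$, which is exactly where the spare diagonal points of the complete quadrangle $A,A',B,B'$ are located. Once the polar is recognized as the right line to project onto, the remaining verifications follow immediately from the quoted propositions, and the various degenerate coincidences (for instance $A'$ equal to one of $B,C,D$, or $A'\in p$) are handled as limit cases, as announced in the introduction.
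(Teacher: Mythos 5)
Your proof is correct, but it takes a genuinely different route from the paper. The paper's own argument (given only as a sketch) is synthetic and global: it considers the harmonic homology of the plane with center $P$ and axis the polar $p_P$, observes that this involution preserves the conic $\mc{C}$, and concludes that $A\mapsto A'$ is the restriction of a projective transformation of the whole plane, hence projective on the conic. You instead argue locally on the polar: computing each cross ratio from the well-chosen vertices $A'$ and $A$ (legitimate by Proposition \ref{cross-ratio-conic}), projecting both pencils onto $p$ via Proposition \ref{cross-ratio-line}, and identifying the four intersection points pairwise. Your key identification is sound: for the inscribed quadrangle $A,A',B,B'$ with $AA'\cap BB'=P$, the paper's polar construction (Figure \ref{polar}) says precisely that $AB'\cap A'B$ lies on $p$ --- this is the self-polarity of the diagonal triangle --- so $A'B$ and $AB'$ do cut $p$ in the same point, and likewise for the pairs involving $C,C'$ and $D,D'$; the first lines $A'A$ and $AA'$ coincide outright. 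One pleasant consistency check you could make explicit: the vertex $A'$ lies on $p$ exactly when $PA'$ is tangent at $A'$, i.e.\ when $A=A'$, so outside that tangential degeneration Proposition \ref{cross-ratio-line} applies without further hypothesis, and the tangency case is the limit case you already flagged. What each approach buys: the paper's homology argument is shorter and conceptually stronger (it shows the involution on $\mc{C}$ extends to a plane projectivity, which is the content behind Fr\'egier's theorem mentioned just before), but it delegates the verification that the homology preserves $\mc{C}$; your argument is longer but fully self-contained within the propositions already proved in Section \ref{section1}, making it arguably a more complete proof than the paper's sketch.
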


\begin{figure}[h]
\centering
\includegraphics[height=4cm]{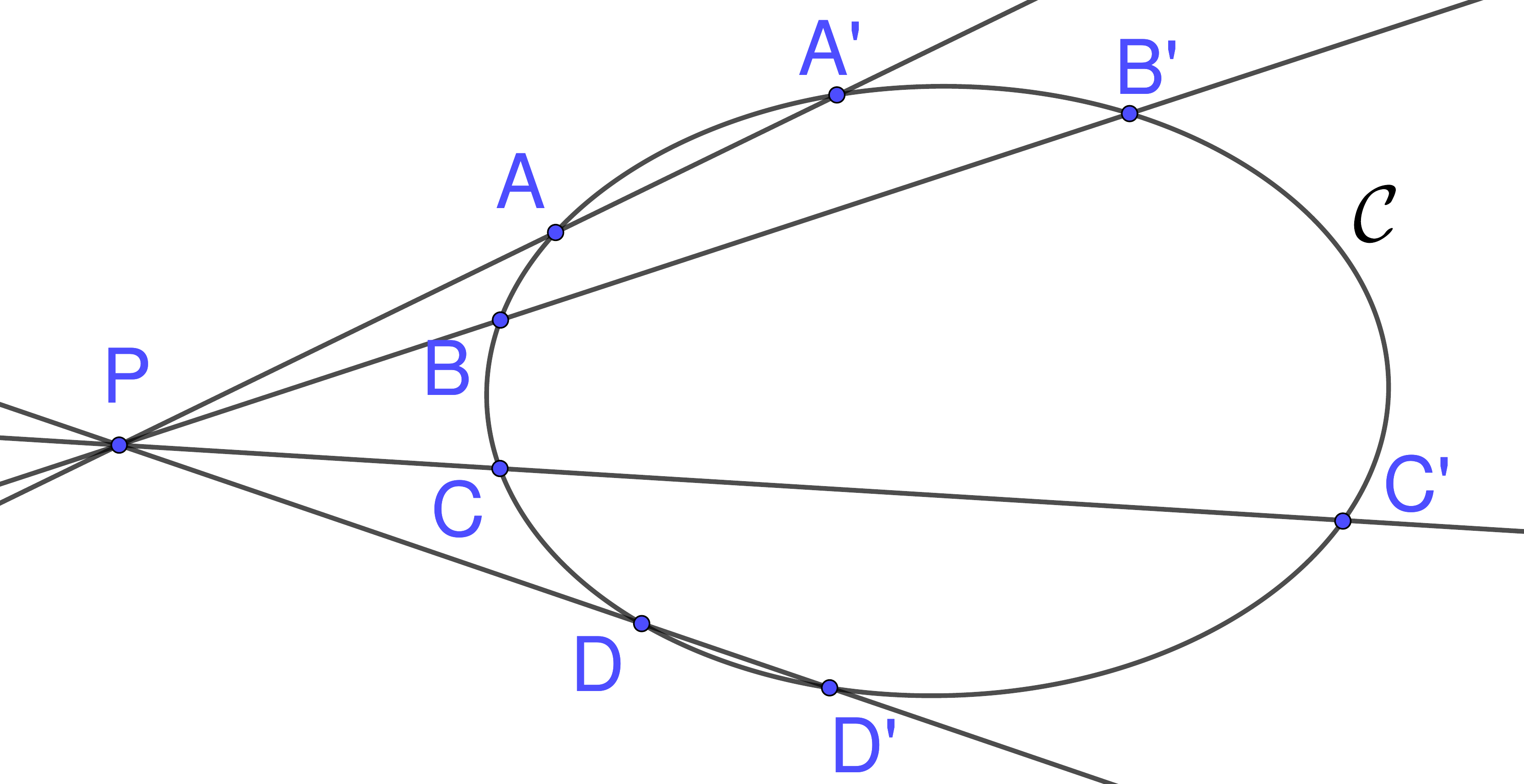}

\caption{Conic to conic}
\label{conic-to-conic}
\end{figure}

Notice that the map $A \mapsto A'$ is involutive. A theorem due to Frégier asserts that any involution of a conic is induced by some point $P$ in the plane (see Theorem 5.4 in \cite{Sidler}).

\begin{proof}
We only indicate the idea of the proof: consider the involution of the plane with center $P$, axis $p_P$ (the polar of $P$ with respect to $\mc{C}$) and which takes $A$ to $A'$. The involution is uniquely determined by this. One easily shows that this involution preserves the conic $\mc{C}$ and induces the involution described in the proposition.
\end{proof}

\subsection{Pencil of conics}

Finally we need some preliminaries on pencils of conics.
Given two conics defined by equations $E_1=0$ and $E_2=0$ (quadratic polynomials in two variables), the pencil $\mc{F}$ generated by them are all conics with equation $\l E_1 + \mu E_2 = 0$.
Geometrically two conics intersect generically in four points, called the \textbf{base points} of the pencil. A conic belongs to their pencil $\mc{F}$ iff it goes through the four base points.
Every pencil of conics contains three degenerate conics, consisting of pairs of lines through the base points. The double points of these degenerate conics are called \textbf{double points} of the pencil. In Figure \ref{pencil-conic} you see a pencil of conics with base points $A, B, C, D$ and double points $R, S$ and $T$.

\begin{figure}[h]
\centering
\includegraphics[height=4cm]{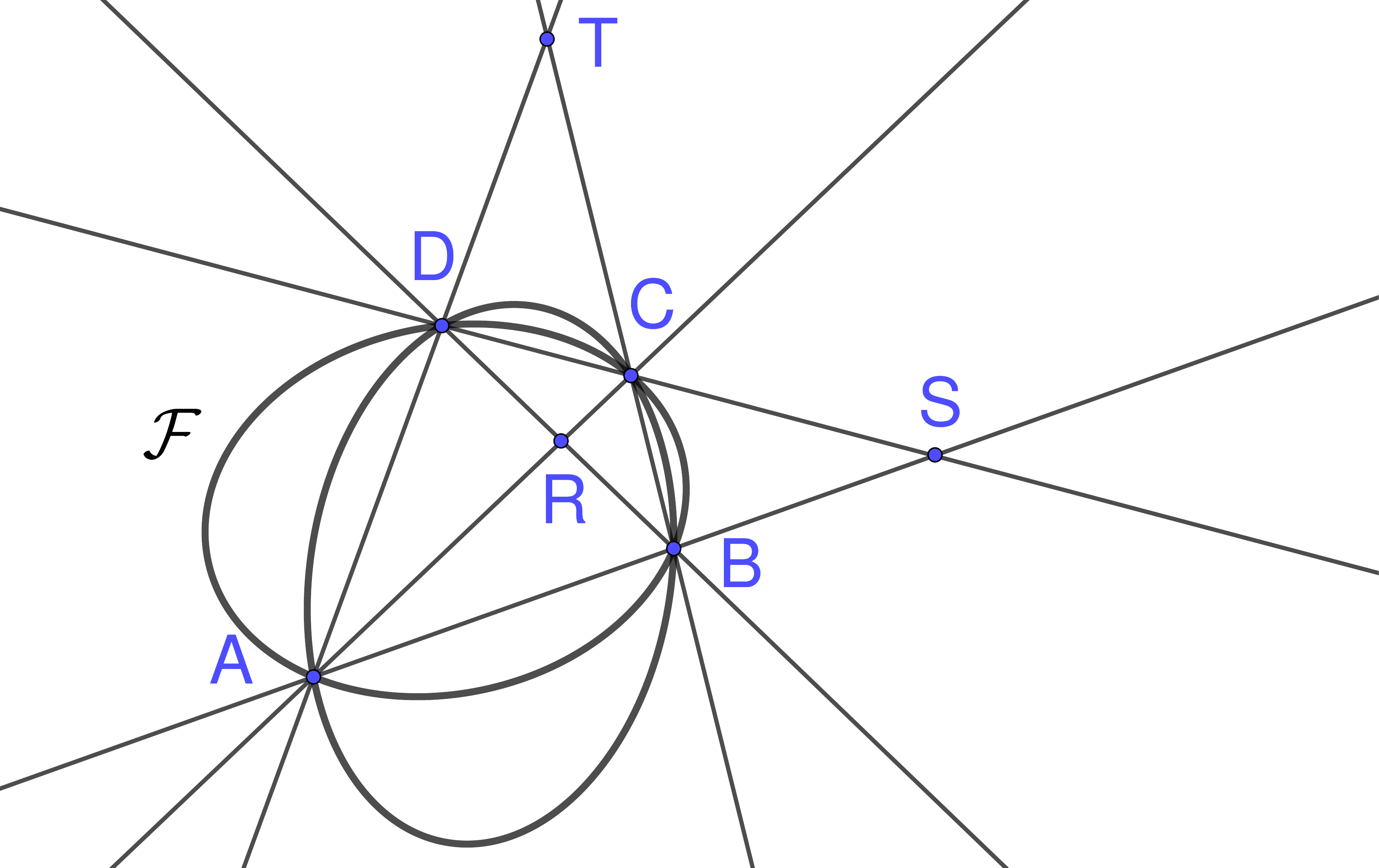}

\caption{Pencil of conics}
\label{pencil-conic}
\end{figure}

A pencil of conics $\mc{F}$ induces a projective involution on every straight line, due to a famous theorem of Desargues:
\begin{prop}[third theorem of Desargues]\label{desargues-prop}
Let $d$ be a line not passing through any base point of $\mc{F}$. For $P \in d$ there is a unique conic $\mc{C} \in \mc{F}$ going through $P$. Denote by $P'$ the second intersection of $\mc{C}$ with $d$. The map $P \mapsto P'$ is a projective involution on $d$.
\end{prop}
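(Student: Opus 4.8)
The plan is to reduce everything to the restriction of the pencil to the line $d$, where it becomes a pencil of binary quadratic forms. First I would fix a projective parameter on $d$ and write a point as $[s:t]$. Restricting the two generating equations $E_1, E_2$ of $\mathcal{F}$ to $d$ yields two binary quadratic forms $q_1(s,t)$ and $q_2(s,t)$, and a conic $\lambda E_1 + \mu E_2 = 0$ of the pencil meets $d$ exactly at the zeros of $\lambda q_1 + \mu q_2$. A common zero of $q_1$ and $q_2$ on $d$ would be a base point lying on $d$, which is excluded by hypothesis; hence for every $P=[s_0:t_0]\in d$ the values $q_1(s_0,t_0), q_2(s_0,t_0)$ are not both zero, so the linear condition $\lambda q_1(P) + \mu q_2(P) = 0$ determines $[\lambda:\mu]$ uniquely. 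This already proves the existence and uniqueness of the conic $\mathcal{C}\in\mathcal{F}$ through $P$.

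Next I would describe $P'$ explicitly. Taking $[\lambda:\mu]=[\,q_2(P):-q_1(P)\,]$, the two intersection points of $\mathcal{C}$ with $d$ are the zeros in $(s,t)$ of the bihomogeneous form
\[
F\big((s_0,t_0),(s,t)\big)=q_2(s_0,t_0)\,q_1(s,t)-q_1(s_0,t_0)\,q_2(s,t),
\]
of bidegree $(2,2)$. One checks at once that $F$ vanishes when $(s,t)=(s_0,t_0)$, so that zero is $P$ itself and the other is $P'$. The key structural observation is that $F$ is \emph{antisymmetric} under exchanging its two arguments; in particular it vanishes on the diagonal, which is the reduced irreducible locus $s_0 t - s\,t_0 = 0$. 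Since the bihomogeneous coordinate ring is a UFD, $F$ is divisible by this factor, and I would write
\[
F\big((s_0,t_0),(s,t)\big)=(s_0 t - s\,t_0)\,G\big((s_0,t_0),(s,t)\big),
\]
where the cofactor $G$, being the quotient of two antisymmetric forms, is \emph{symmetric} of bidegree $(1,1)$.

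Finally, since the diagonal factor vanishes exactly at $P$, the second intersection is characterized by $G(P,P')=0$. As $G$ is a symmetric bilinear form, this is a symmetric, separately-projective relation between $P$ and $P'$: it defines a Möbius transformation $P\mapsto P'$ of $d$, and its symmetry forces that transformation to equal its own inverse, i.e.\ to be an involution. Concretely, in an affine chart the relation reads $\gamma\,x x' + \beta(x+x') + \alpha = 0$, whose associated matrix $\left(\begin{smallmatrix}-\beta & -\alpha\\ \gamma & \beta\end{smallmatrix}\right)$ is trace-free, the algebraic signature of an involution. The only point to rule out is degeneracy: $G\equiv 0$ would force $q_1,q_2$ to be proportional, impossible for a non-degenerate pencil cut by such a line $d$. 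The main obstacle is thus not any hard computation but the structural step of recognizing that the second-intersection map is encoded by the antisymmetric form $F$ and extracting from it the symmetric cofactor $G$; once that is in place, both projectivity and the involution property are immediate.
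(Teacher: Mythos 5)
The paper itself gives no proof of this proposition --- it defers to Sidler, Theorem 6.1 --- so your coordinate proof is a genuine addition rather than a variant of an argument in the text. Your route is sound in outline and mostly in detail: restricting the pencil to $d$ gives the pencil of binary quadratics $\lambda q_1+\mu q_2$; the absence of base points on $d$ gives existence and uniqueness of the conic through $P$; the form $F\bigl((s_0,t_0),(s,t)\bigr)=q_2(s_0,t_0)q_1(s,t)-q_1(s_0,t_0)q_2(s,t)$ is indeed antisymmetric, hence divisible by the diagonal factor $s_0t-st_0$, and the cofactor $G$ is a symmetric $(1,1)$-form whose zero in the second argument is $P'$ (this even handles tangency, where $G(P,P)=0$ gives $P'=P$). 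This is a nice structural argument, and more self-contained than the classical synthetic proof the paper points to.

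There is, however, one genuine gap in your degeneracy analysis: you rule out only $G\equiv 0$, but the step ``a symmetric $(1,1)$-relation defines a M\"obius transformation, and symmetry forces an involution'' requires $G$ to be \emph{nondegenerate} as a bilinear form. If $G$ has rank one, say $G=\ell(s_0,t_0)\,\ell(s,t)$ with $\ell(Z)=0$, the relation $G(P,P')=0$ sends every $P$ with $\ell(P)\neq 0$ to the single point $Z$ --- a constant map, not a projective involution. In your matrix $\left(\begin{smallmatrix}-\beta & -\alpha\\ \gamma & \beta\end{smallmatrix}\right)$ this is exactly the case $\det=\alpha\gamma-\beta^2=0$: trace-free is the signature of an involution only together with invertibility. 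The hypotheses do exclude this case, but you must say why: if $G=\ell\otimes\ell$, then $F(\cdot,Z)\equiv 0$, i.e.\ $q_1(Z)\,q_2\equiv q_2(Z)\,q_1$ as binary forms; since $q_1,q_2$ are not proportional this forces $q_1(Z)=q_2(Z)=0$, so $Z$ would be a base point lying on $d$, contrary to hypothesis. Relatedly, your one-line dismissal of proportional $q_1,q_2$ deserves its justification: proportionality means some member of the pencil contains $d$ as a component, so all four base points lie on the residual line, contradicting that in a non-degenerate pencil no three base points are collinear. With these two observations inserted, the proof is complete.
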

This induced map on $d$ is called \textbf{Desargues involution}. The proof can be found in \cite{Sidler}, Theorem 6.1.

\begin{figure}[h]
\centering
\includegraphics[height=4cm]{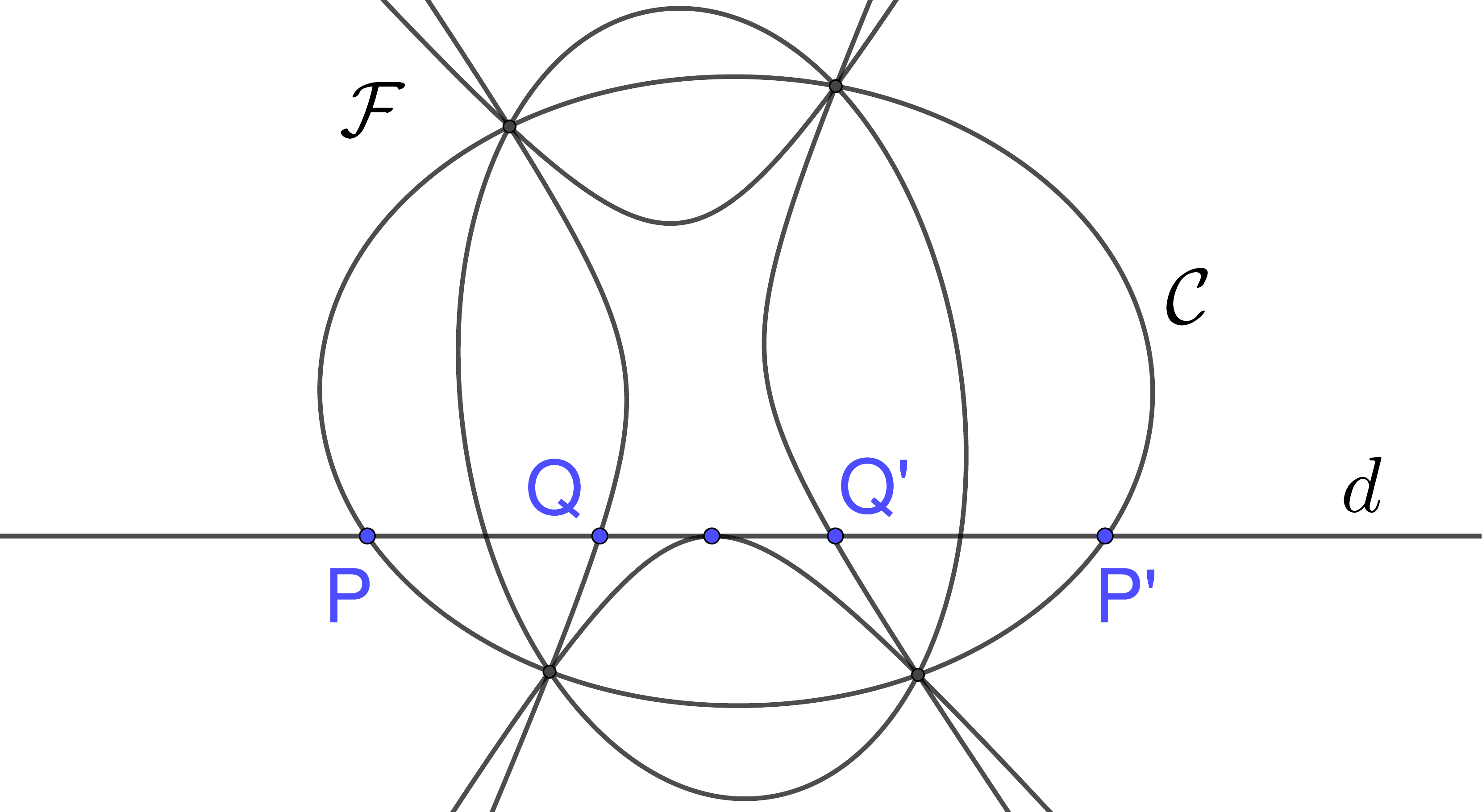}

\caption{Desargues involution}
\label{desargues}
\end{figure}

Further, any pencil of conics $\mc{F}$ induces a transformation of the whole plane:
\begin{prop}\label{conj-point}
Let $P$ be a point in the plane, not a double point of the pencil of conics $\mc{F}$. The set of all polars of $P$ with respect to conics in $\mc{F}$ are concurrent to a single point $P'$.
\end{prop}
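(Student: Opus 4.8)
The plan is to linearize everything by representing conics as quadratic forms. Working in homogeneous coordinates on $\C P^2$, I would identify a conic with a symmetric $3\times 3$ matrix $M$, the conic being $\{x : x^\top M x = 0\}$, and use the standard fact that the polar of a point $P$ (viewed as a nonzero column vector) with respect to this conic is the line with coefficient vector $MP$, namely $p_P = \{x : (MP)^\top x = 0\}$. First I would verify that this algebraic description agrees with the synthetic definition of the polar given above (the locus of points $Q$ with $(P,Q,X,Y)=-1$); this is a routine computation obtained by restricting the quadratic form to the line $PQ$ and reading off the harmonic condition, and it is precisely where the value $-1$ in the definition is used. The decisive feature for us is that $MP$ is \emph{linear} in $M$.

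Next, fix two generators $E_1,E_2$ of the pencil $\mc{F}$ with matrices $M_1,M_2$, so that an arbitrary member of $\mc{F}$ has matrix $M_{\l,\mu}=\l M_1+\mu M_2$. By the previous step the polar of $P$ with respect to this conic has coefficient vector $M_{\l,\mu}P=\l(M_1P)+\mu(M_2P)$. As $(\l:\mu)$ ranges over $\C P^1$ these coefficient vectors sweep out the linear span of $M_1P$ and $M_2P$, so the family of polars is a pencil of lines. Since every line in a pencil passes through a common point, each polar passes through the intersection $P'$ of the two reference polars of $P$ taken with respect to $E_1$ and $E_2$, which is the candidate point of concurrency.

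The one thing that must be ruled out for $P'$ to be a single well-defined point is that $M_1P$ and $M_2P$ be linearly \emph{independent}, equivalently that the two reference polars be genuinely distinct lines, and I expect this to be the only real obstacle. It is exactly where the hypothesis that $P$ is not a double point enters: $M_1P$ and $M_2P$ are linearly dependent iff $(\l M_1+\mu M_2)P=0$ for some $(\l:\mu)\neq 0$, i.e.\ iff $P$ lies in the kernel of some $M_{\l,\mu}$. Since $P\neq 0$, such a matrix is singular, so the corresponding conic is one of the three degenerate members of the pencil and $P$ is its singular point --- precisely a double point of $\mc{F}$. Hence as long as $P$ is not a double point the span of $M_1P,M_2P$ is two-dimensional, the polars form an honest pencil of distinct lines, and they all meet in the unique point $P'$, which proves the proposition.
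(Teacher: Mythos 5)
Your proof is correct, but it is not the route the paper takes: the paper argues synthetically, while you argue algebraically --- and in fact the paper itself sketches exactly your argument in the remark immediately following its proof, with a pointer to \cite{Berger}, 16.5.3. The paper's own proof defines $P'$ as the intersection of the polars of $P$ with respect to two members $\Gamma_1,\Gamma_2$, invokes the third theorem of Desargues (Proposition \ref{desargues-prop}) to get a projective involution on the line $PP'$ exchanging the intersections $M_i,N_i$ of $PP'$ with each $\Gamma_i$, observes that the harmonic relations $(P,P',M_i,N_i)=-1$ force $P$ and $P'$ to be the fixed points of this involution, and then concludes $(P,P',M,N)=-1$ for every member $\Gamma$, so every polar passes through $P'$. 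Your linearization (the polar coefficient vector $MP$ depends linearly on the symmetric matrix $M$, so the polars of $P$ sweep out a pencil of lines spanned by $M_1P$ and $M_2P$) is shorter and makes the role of the hypothesis sharper: dependence of $M_1P$ and $M_2P$ means $P$ lies in the kernel of some singular member, and under the paper's standing assumption of a non-degenerate pencil the three singular members are rank-two line pairs whose kernels are exactly the double points $R,S,T$ --- so your identification of where ``not a double point'' enters is precisely right (the rank-one case of a double line, where the kernel is two-dimensional, is excluded by that standing assumption and is the one caveat your argument silently relies on). Beyond brevity, your parametrization $(\lambda:\mu)\mapsto \lambda M_1P+\mu M_2P$ shows that $\Gamma\mapsto p_P$ is itself a projective map, which is essentially the content of Theorem \ref{thm1} under the abstract definition of the cross ratio of conics from the introduction, so your proof delivers more than the proposition asks; what the synthetic proof buys in exchange is that it stays inside the paper's elementary toolkit and yields the extra geometric fact recorded parenthetically there --- the fixed points of the Desargues involution on $PP'$ are $P$ and $P'$, equivalently the member of $\mc{F}$ through $P$ is tangent to $PP'$ --- which the matrix computation leaves invisible.
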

We call $P'$ the \textbf{conjugated point} of $P$ with respect to $\mc{F}$.

\begin{figure}[h]
\centering
\includegraphics[height=4cm]{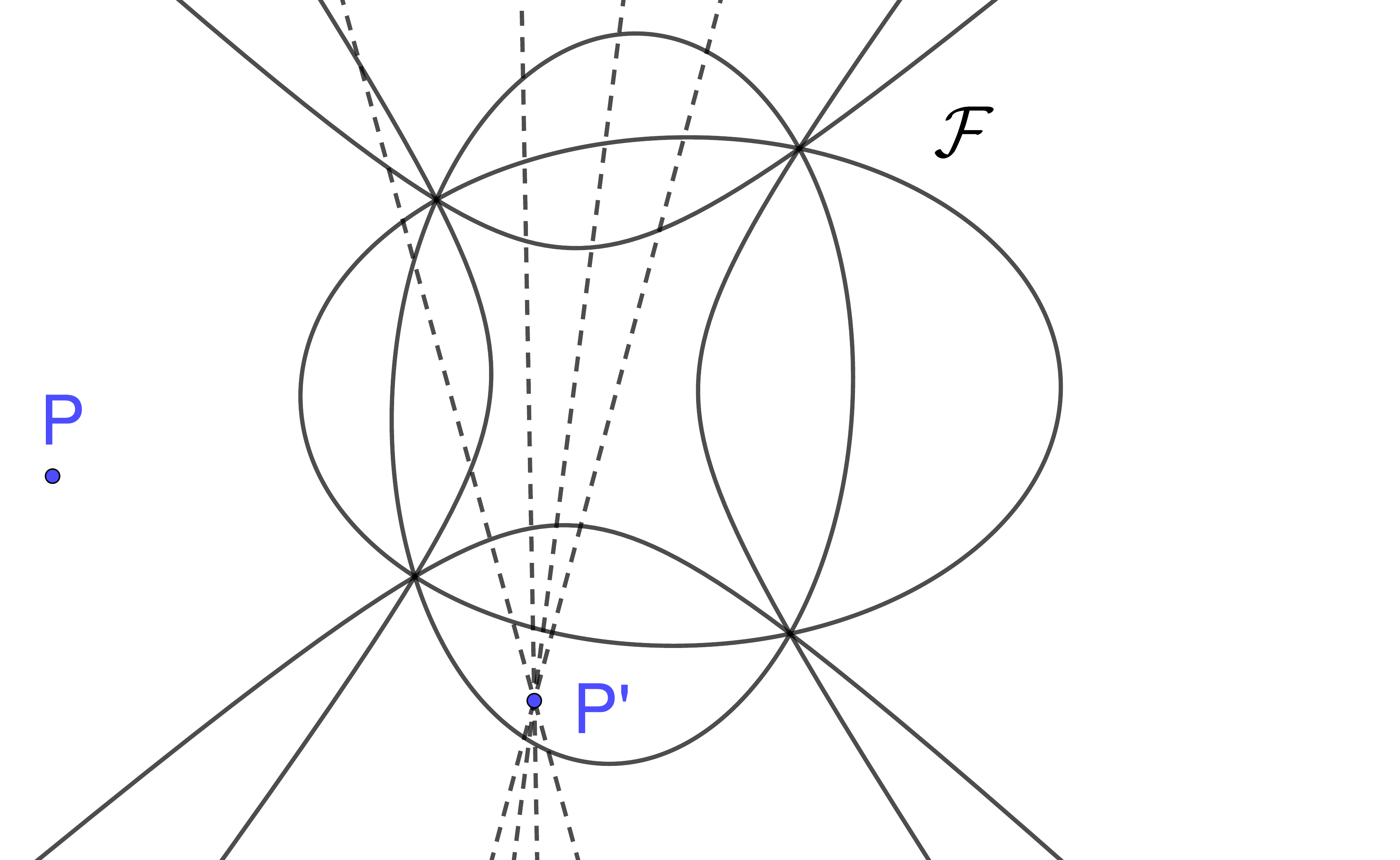}

\caption{Conjugated point}
\label{point-conjugue}
\end{figure}

\begin{proof}
Denote by $P'$ the intersection of the polars of $P$ with respect to two conics $\Gamma_1$ and $\Gamma_2$ of the pencil $\mc{F}$. Denote by $M_i$ and $N_i$ the intersection points of $PP'$ with $\Gamma_i$. The Desargues involution exchanges $M_i$ and $N_i$ and is uniquely determined by this. Furthermore, since $(P,P',M_i,N_i)=-1$ (by definition of the polar), we see that the fixed points of the Desargues involution are $P$ and $P'$ (this means that the conic in $\mc{F}$ passing through $P$ is tangent to $PP'$).
For any conic $\Gamma \in \mc{F}$, denote by $M, N$ the intersection of $PP'$ with $\Gamma$. Then $(P,P',M,N)=-1$ since $M$ and $N$ are exchanged by the Desargues involution. Therefore the polar of $P$ with respect to $\Gamma$ passes through $P'$.
\end{proof}

\begin{figure}[h]
\centering
\includegraphics[height=4cm]{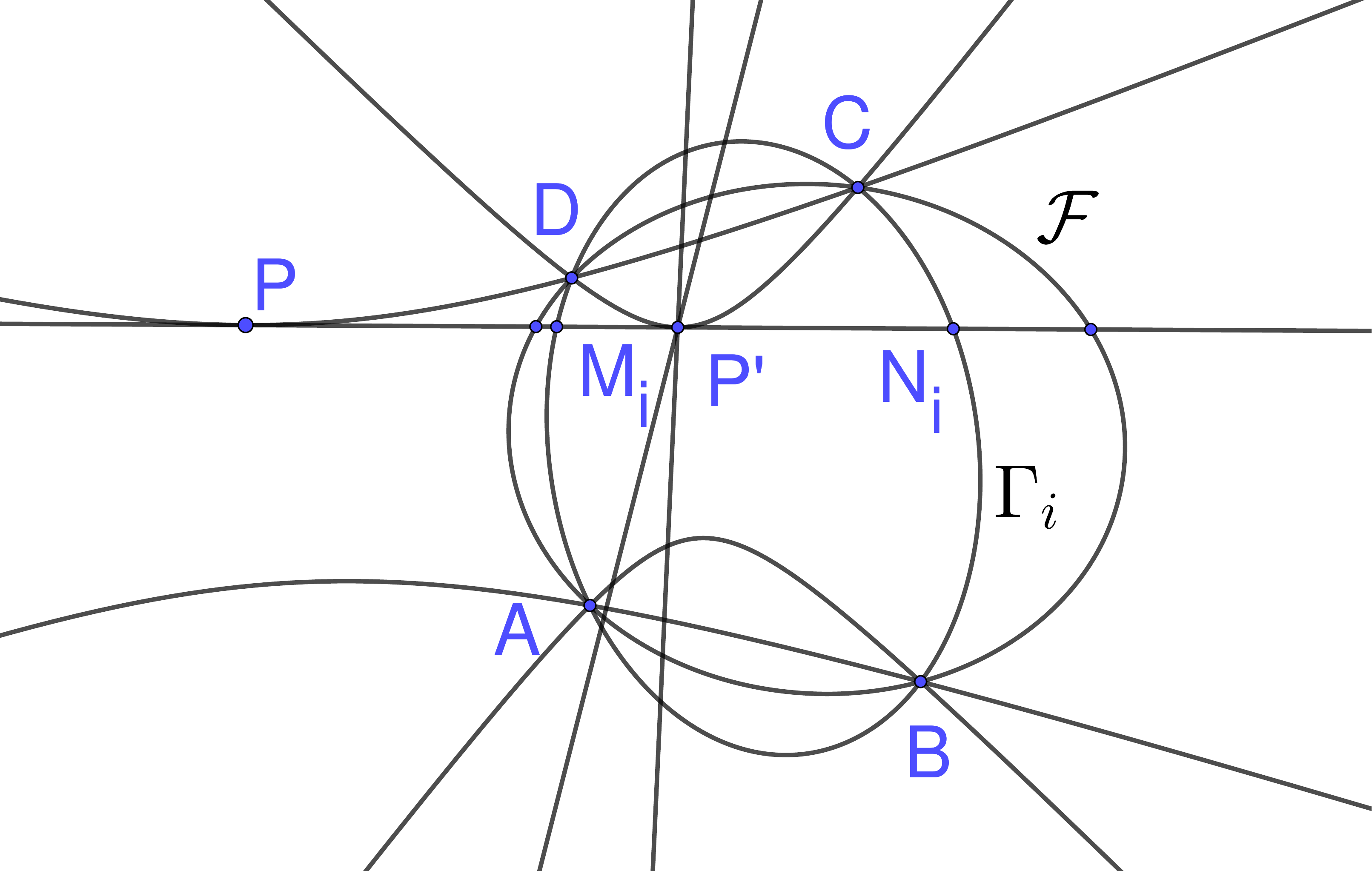}

\caption{Proof of Proposition \ref{conj-point}}
\label{point-conjugue-proof}
\end{figure}

\begin{Remark}
There is also a simple algebraic proof: the equation of the polar of $P$ with respect to a conic given by an equation which is a linear combination of polynomials $E_1$ and $E_2$ is itself a linear combination of the equations of the polars with respect to $E_1$ and $E_2$. Thus, the set of all polars forms a pencil.
See \cite{Berger}, 16.5.3, for more details.
\end{Remark}

The conjugation with respect to a pencil of conics is not projective. It is a \textit{quadratic birational} transformation. The image of a line is given by the following proposition:
\begin{prop}\label{conj-line}
The point-wise image of a straight line $d$ under conjugation with respect to a pencil of conics $\mc{F}$ is a conic which passes through the double points of $\mc{F}$.
The same result is obtained by taking the set of all poles of $d$ with respect to $\mc{F}$.
\end{prop}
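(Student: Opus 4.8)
The plan is to recognise both loci in the statement as instances of the Chasles--Steiner theorem (Proposition \ref{Chasles-Steiner-prop}), and to choose the version of the construction that makes the double points visible. Of the two natural set-ups---fixing two conics and varying $P\in d$, or fixing two points of $d$ and varying the conic---I would use the second, since the degenerate members of $\mc F$ then directly produce the double points.

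So I would fix two points $P_1,P_2$ on $d$ and let $\Gamma$ range over $\mc F$. By Proposition \ref{conj-point} the polars of $P_1$ with respect to the conics of $\mc F$ all pass through the conjugate point $P_1'$, hence form a pencil of lines based at $P_1'$; likewise the polars of $P_2$ form a pencil based at $P_2'$. Writing $\Gamma=\l E_1+\mu E_2$, the polar of a fixed point is the corresponding linear combination of its two polars with respect to $E_1$ and $E_2$ (the algebraic remark after Proposition \ref{conj-point}), so each pencil is parametrised projectively by $\Gamma\in\mc F\cong\C P^1$. The correspondence sending $\mathrm{polar}(P_1,\Gamma)$ to $\mathrm{polar}(P_2,\Gamma)$ is therefore projective, and Chasles--Steiner shows that the intersection point $Q_\Gamma$ of corresponding lines describes a conic $\mc G$ through $P_1'$ and $P_2'$. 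As the pencil is non-degenerate this correspondence is not a perspectivity, so $\mc G$ is a genuine conic and not a line.

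Next I would check that $\mc G$ is simultaneously the pole-set and the conjugation image. For non-degenerate $\Gamma$ the point $Q_\Gamma=\mathrm{polar}(P_1,\Gamma)\cap\mathrm{polar}(P_2,\Gamma)$ is the pole of $d=P_1P_2$: since $P_1$ and $P_2$ lie on $d$, which is the polar of the pole of $d$, the pole--polar duality puts that pole on both $\mathrm{polar}(P_1,\Gamma)$ and $\mathrm{polar}(P_2,\Gamma)$, whose only common point is $Q_\Gamma$. Thus $\mc G$ is the set of poles of $d$. To match it with the image of $d$ under conjugation I would use that conjugation is an involution: if $P'$ is conjugate to $P$ then, by the symmetry of the pole--polar relation, $P$ lies on every polar of $P'$, and uniqueness in Proposition \ref{conj-point} forces the conjugate of $P'$ to be $P$. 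Consequently the conjugate of the pole $Q_\Gamma$ lies on $\mathrm{polar}(Q_\Gamma,\Gamma)=d$, so it is a point of $d$ with image $Q_\Gamma$; conversely the polars of a conjugate $P'$ sweep out all lines through $P$, one of which is $d$, whence $P'$ is the pole of $d$ with respect to the appropriate conic. Hence the three descriptions coincide.

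Finally, the double points, which is the step I expect to be delicate, because at a degenerate member $\Gamma_i=\ell\cup\ell'$ of $\mc F$ the pole of $d$ is not literally defined. The resolution is that with respect to a line-pair the polar of every point passes through the double point $R_i=\ell\cap\ell'$, the latter being conjugate to all points of the plane. Hence $\mathrm{polar}(P_1,\Gamma_i)$ and $\mathrm{polar}(P_2,\Gamma_i)$ both pass through $R_i$ and meet there, so the Chasles--Steiner value $Q_{\Gamma_i}$ equals $R_i$; running $\Gamma$ through the three degenerate parameters of $\mc F$ puts the three double points $R,S,T$ on $\mc G$. The crux is exactly this: phrasing the construction through the two fixed polars---rather than through the a priori undefined pole or conjugate---is what lets the degenerate conics contribute their double points as honest points of the Chasles--Steiner locus.
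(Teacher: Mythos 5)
Your proof is correct, but it runs Chasles--Steiner in the direction dual to the paper's. The paper fixes two \emph{degenerate} members of $\mc{F}$ (with vertices $R$ and $S$) and varies the point $P$ along $d$: the maps $P \mapsto RP'$ and $P \mapsto SP'$ are projective by Proposition \ref{polar-duality} applied to the line-pairs, so the intersection locus of $P'$ is a conic through $R$ and $S$, and the argument is repeated with the pair $(S,T)$ to pick up $T$; the identification with the pole-set is then done separately, via involutivity and a forward reference to Theorem \ref{thm1} for the surjectivity of $\Gamma \mapsto \mathrm{polar}(P',\Gamma)$. You instead fix two points $P_1,P_2$ of $d$ and vary $\Gamma \in \mc{F}$, getting the projectivity between the pencils at $P_1'$ and $P_2'$ from the algebraic linearity remark, so that the locus passes through $P_1'$, $P_2'$, and --- by evaluating at the three degenerate parameters, where every polar with respect to a line-pair passes through its vertex --- through $R$, $S$, $T$ in a single application of Chasles--Steiner. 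Your route buys three things: it needs the construction only once rather than twice; it treats the pole-set and the conjugation-image uniformly, since $Q_\Gamma$ is visibly the pole of $d$ for smooth $\Gamma$; and, notably, it repairs a latent circularity in the paper, whose proof of this proposition cites Theorem \ref{thm1} while the proof of Theorem \ref{thm1} uses this proposition --- you get the needed surjectivity directly from the linear parametrisation of polars by $\mc{F}$. The costs are mild: your argument leans on the algebraic remark rather than being purely synthetic, and it needs the same genericity hypotheses the paper leaves implicit ($d$ avoiding the double points, so that the polars of $P_1$ and $P_2$ with respect to a degenerate member are distinct and meet exactly at its vertex, and $P_1' \neq P_2'$, i.e.\ $d$ not a side of the diagonal triangle). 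One small inaccuracy: you attribute the non-perspectivity of the correspondence to the non-degeneracy of the pencil; the clean justification is rather that a self-corresponding common line $P_1'P_2'$ would force $\mathrm{polar}(P_1,\Gamma)=\mathrm{polar}(P_2,\Gamma)$ for some $\Gamma$, which is impossible for smooth $\Gamma$ and, for degenerate $\Gamma$, happens only when the vertex lies on $d$ --- or, more simply, that the locus contains the non-collinear points $R,S,T$.
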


\begin{figure}[h]
\centering
\includegraphics[height=4cm]{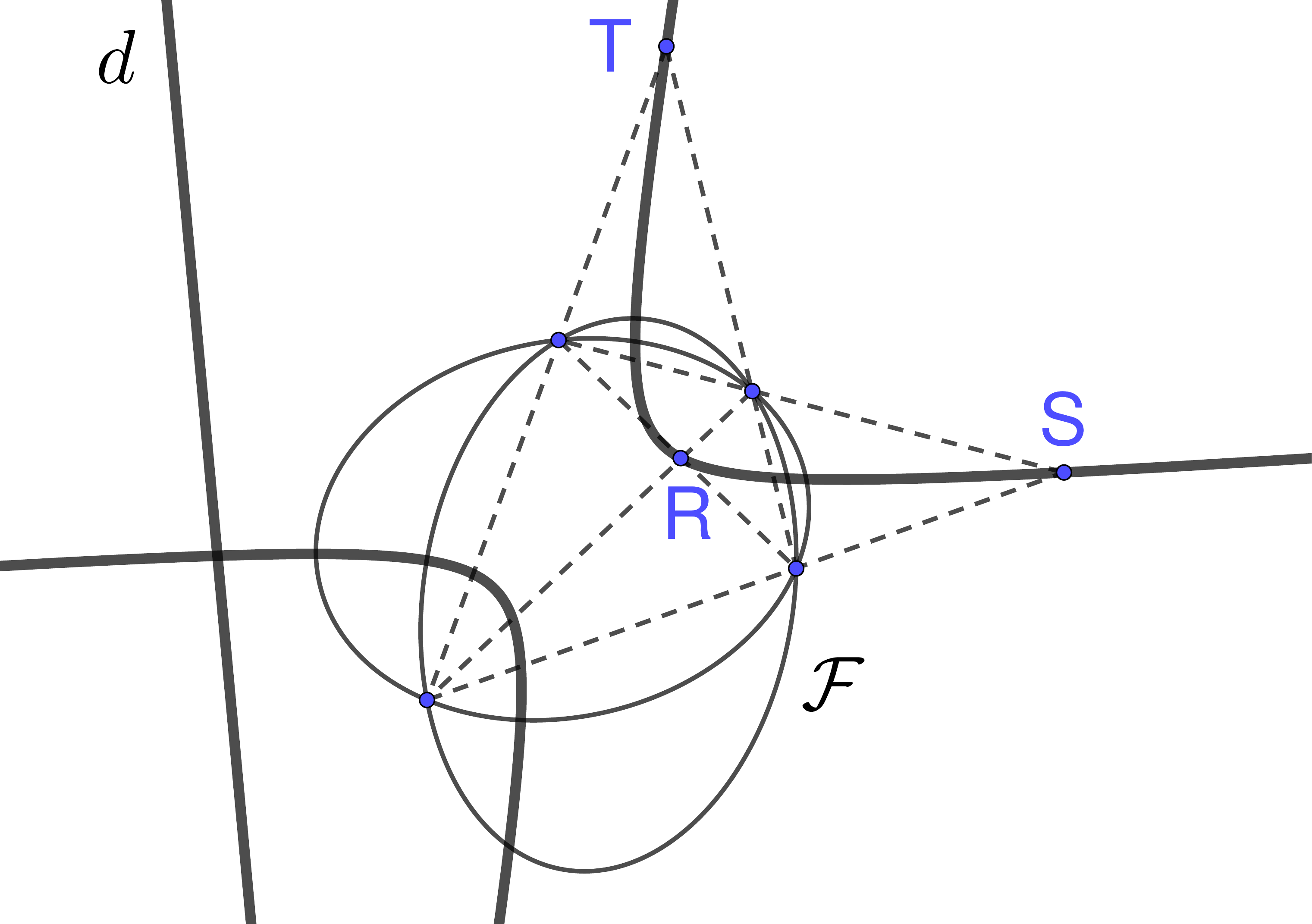}
\hspace{0.5cm}
\includegraphics[height=4cm]{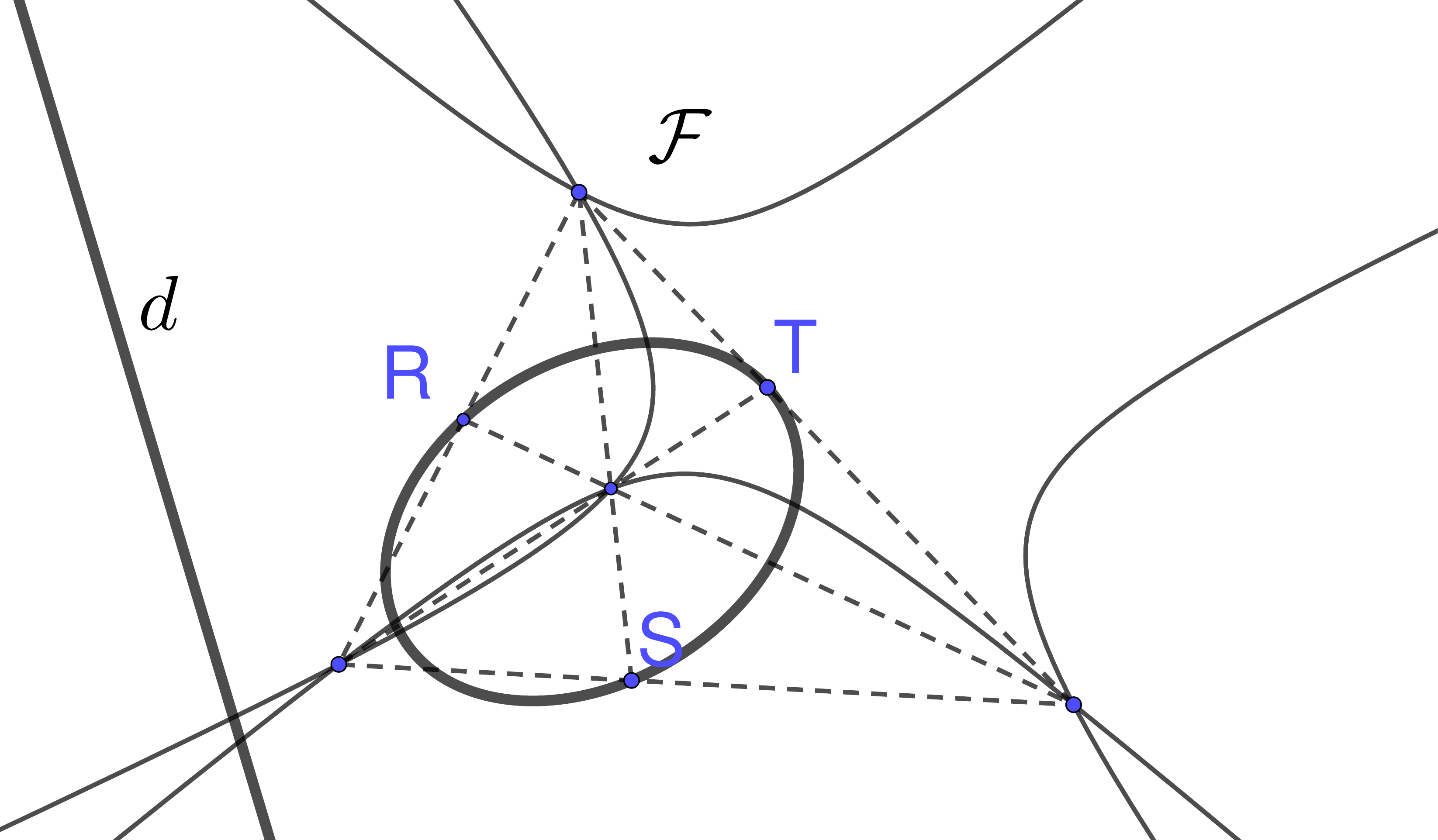}

\caption{Conjugated line gives a conic (two examples)}
\label{droite-conjugue}
\end{figure}

\begin{proof}
Let $P$ be a point on $d$. Denote by $R, S, T$ the double points of the pencil of conics. The line $RP'$ is the polar of $P$ with respect to the degenerate conic defining $R$. Hence by Proposition \ref{polar-duality} the map $P\mapsto RP'$ is projective. In the same vain, the map $P\mapsto SP'$ is projective. Thus the map $RP' \mapsto SP'$ is projective.

By the theorem of Chasles-Steiner, Proposition \ref{Chasles-Steiner}, the image of $P'$ is a conic passing through $R$ and $S$. Repeating the argument with $S$ and $T$ gives that this conic also passes through $T$.

To prove that this conic is the set of all polars of $d$ with respect to the pencil $\mc{F}$, take again a point $P$ on $d$. We show that there is a conic in $\mc{F}$ such that the pole of $d$ is $P'$. Since $P''=P$ all polars of $P'$ with respect to $\mc{F}$ pass through $P$. We will see in Theorem \ref{thm1} that the map $\Gamma \in \mc{F} \mapsto p_{P'}$ is projective, so in particular surjective. Hence there is a conic whose polar is $d$ which concludes the proof.
\end{proof}

Even if the conjugation with respect to a pencil of conics is not projective, it does preserve the cross ratio in some sense:

\begin{prop}\label{conjugation-pencil}
Let $A, B, C, D$ be four points on a common line $d$ and $\mc{F}$ be a pencil of conics. Denote by $A', B', C', D'$ the conjugated points and by $\mc{C}$ the conic which is the image of $d$ under conjugation. Then $$(A,B,C,D)=(A',B',C',D')_{\mc{C}}.$$
\end{prop}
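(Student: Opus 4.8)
The plan is to reduce the statement to the very definition of the cross ratio of four points on a conic (Proposition \ref{cross-ratio-conic}) by using one of the double points of $\mc{F}$ as a vantage point on $\mc{C}$. The whole argument is really a one-line consequence of the machinery already developed for Proposition \ref{conj-line}, so the work is to assemble the right pieces rather than to compute anything.

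First I would recall the key fact established inside the proof of Proposition \ref{conj-line}: the conic $\mc{C}$ (the conjugate image of $d$) passes through the three double points $R, S, T$ of the pencil, and for the double point $R$ the map $P \mapsto RP'$ is projective. The reason is that the line $RP'$ is exactly the polar of $P$ with respect to the degenerate conic of $\mc{F}$ whose double point is $R$ (a pair of lines through the base points), so by the pole-polar duality of Proposition \ref{polar-duality} the assignment $P \mapsto RP'$ is a projectivity from the points of $d$ to the pencil of lines through $R$. Applying this projectivity to the four given points yields the equality of cross ratios
\begin{equation*}
(A,B,C,D) = (RA',RB',RC',RD'),
\end{equation*}
where the right-hand side is the cross ratio of four concurrent lines through $R$.

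Next I would observe that $R$ is a point of $\mc{C}$ and that $A',B',C',D'$ are points of $\mc{C}$ (being the conjugates of points of $d$, whose locus is $\mc{C}$ by Proposition \ref{conj-line}). Therefore the four lines $RA', RB', RC', RD'$ are precisely the lines joining the point $R \in \mc{C}$ to four other points of $\mc{C}$, and the definition of the cross ratio of four points on a conic gives
\begin{equation*}
(RA',RB',RC',RD') = (A',B',C',D')_{\mc{C}}.
\end{equation*}
Combining the two displayed equalities yields $(A,B,C,D) = (A',B',C',D')_{\mc{C}}$, as claimed.

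I expect no serious obstacle here; the only point that requires care is the identification of $RP'$ with a polar of $P$, which is what makes $P \mapsto RP'$ projective, and the verification that $R$ genuinely lies on $\mc{C}$ so that it can serve as the vantage point for the conic cross ratio. Both are already guaranteed by Proposition \ref{conj-line}, so the role of that proposition is not merely to produce the conic $\mc{C}$ but to supply, through its proof, the projective structure that this statement needs.
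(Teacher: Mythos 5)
Your proof is correct, and it runs on the same engine as the paper's own argument: pick one member of the pencil, identify the lines joining a vantage point on $\mc{C}$ to $A', B', C', D'$ as the polars of $A, B, C, D$ with respect to that member, invoke the projectivity of pole--polar duality (Proposition \ref{polar-duality}), and finish with Proposition \ref{cross-ratio-conic}. The one genuine difference is the choice of member: the paper takes a conic $\Gamma \in \mc{F}$ whose pole $P'$ of $d$ is distinct from $A', B', C', D'$ and uses that pole as the vantage point, whereas you take the \emph{degenerate} member with double point $R$, so that your vantage point is $R$ itself and the needed projectivity $P \mapsto RP'$ is literally a step already established inside the proof of Proposition \ref{conj-line} --- a clean reuse that makes your version slightly shorter. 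What your variant buys: no auxiliary choice of $\Gamma$. What it costs: the symmetric genericity condition $A', B', C', D' \neq R$, which fails precisely when one of the four points is $d \cap ST$ (such a point conjugates to $R$); in that case $RA'$ must be read as the tangent to $\mc{C}$ at $R$, consistent with the usual convention for the conic cross ratio when one of the points coincides with the vantage point, or one simply switches to $S$ or $T$ as vantage point. Note also that you apply Proposition \ref{polar-duality} to a degenerate conic; the paper does exactly the same in the proof of Proposition \ref{conj-line}, so this is within its own standard of rigor.
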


\begin{figure}[h]
\centering
\includegraphics[height=4cm]{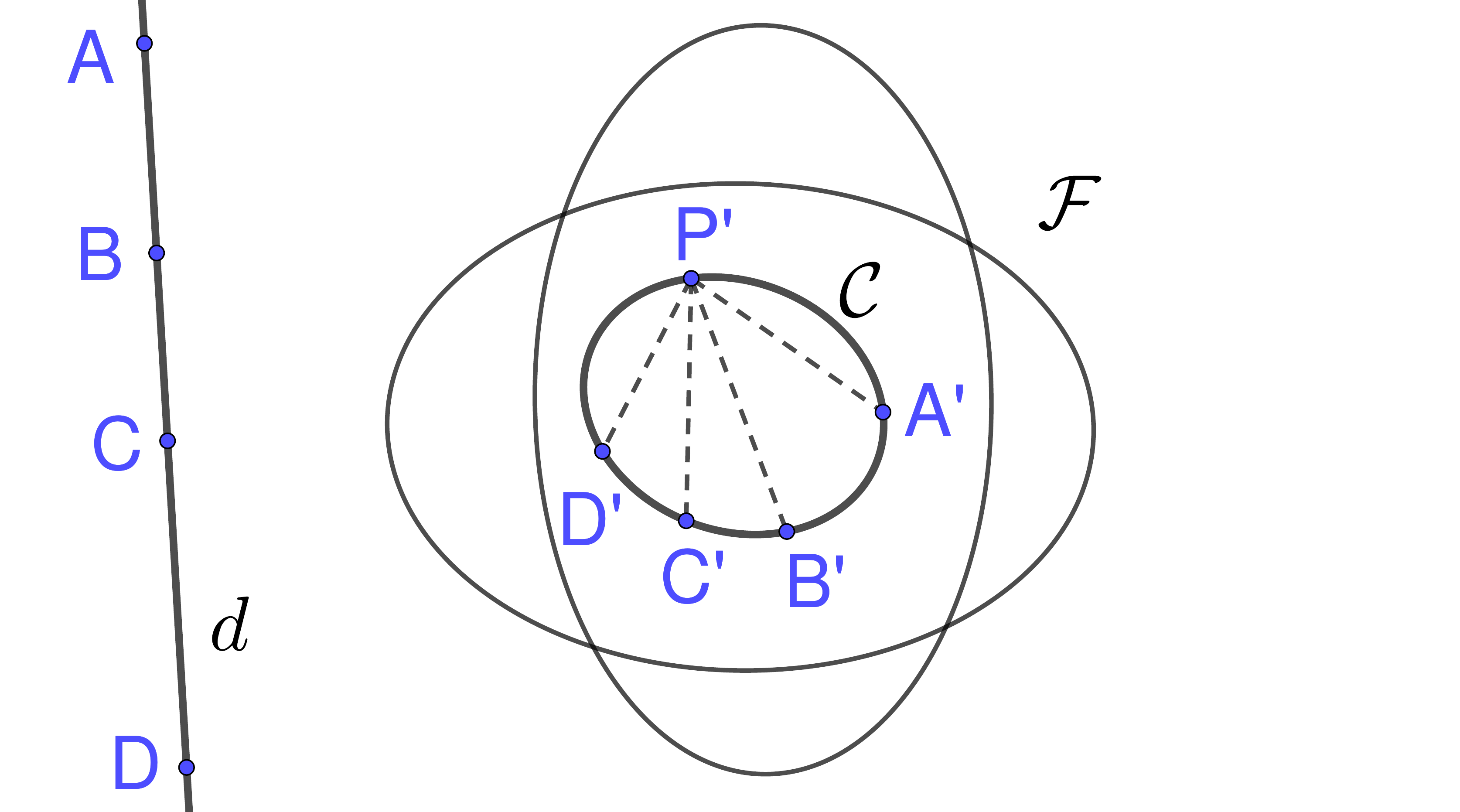}

\caption{Cross ratio preservation under conjugation (figure not realistic)}
\label{conjugation-cross-ratio}
\end{figure}

\begin{proof}
Let $\Gamma$ be a conic in $\mc{F}$ such that the pole of $d$ with respect to $\Gamma$ is a point $P'$ different from $A', B', C'$ and $D'$ (see Figure \ref{conjugation-cross-ratio}). The polar of $A$ with respect to $\Gamma$ is then $P'A'$ (every polar to a conic in $\mc{F}$ passes through $A'$) and the same holds true for the other points $B, C, D$.

Thus
\begin{align*}
(A,B,C,D) &= (P'A',P'B',P'C',P'D') & \text{ by Proposition \ref{polar-duality}} \\
&= (A',B',C',D')_{\mc{C}}.
\end{align*}
\end{proof}

\section{Cross ratio in a pencil of conics}\label{section2}

In this section, we explore geometric ways to find the cross ratio of four conics in a common pencil.
Apart from the theorems \ref{thm1} and \ref{thm3}, all the others seem to be new. 

For the whole section we fix a pencil of conics $\mc{F}$ with base points $A, B, C, D$, double points $R, S, T$ and we consider four conics $\Gamma_1, \Gamma_2, \Gamma_3$ and $\Gamma_4$ in $\mc{F}$.

Probably the first thing one might try to do to determine the cross ratio of the four conics $\Gamma_i$ is the following:
\begin{thm}\label{thm1}
Let $P$ be a point in the plane, different from $R, S, T$, and denote by $p_i$ the polar of $P$ with respect to $\Gamma_i$ ($i=1, 2, 3 ,4$). Then
$$(\Gamma_1,\Gamma_2,\Gamma_3,\Gamma_4) = (p_1,p_2,p_3,p_4).$$
\end{thm}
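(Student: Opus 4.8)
The plan is to show that the assignment $\Gamma \mapsto p_\Gamma$, sending a conic of the pencil to the polar of $P$ with respect to it, is a projective isomorphism from $\mc{F}$ onto the pencil of lines through the conjugated point $P'$; since a projective isomorphism preserves the cross ratio, the theorem follows at once. First I would recall that, by the very definition of the cross ratio of conics, if we fix two generators $E_1 = 0$ and $E_2 = 0$ of $\mc{F}$ and write $\Gamma_i = \{\lambda_i E_1 + \mu_i E_2 = 0\}$, then $(\Gamma_1, \Gamma_2, \Gamma_3, \Gamma_4)$ is the cross ratio of the four points $[\lambda_i : \mu_i]$ on the parameter line $\mathbb{P}^1$.

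Next I would make the polar map explicit. Representing the two generators by symmetric matrices $M_1, M_2$ and the point by a column vector $P$, the polar of $P$ with respect to $\lambda E_1 + \mu E_2$ is the line with equation $P^\top(\lambda M_1 + \mu M_2)X = 0$, that is $\lambda\,(P^\top M_1 X) + \mu\,(P^\top M_2 X) = 0$. Hence $p_{[\lambda:\mu]}$ is the line $\lambda \ell_1 + \mu \ell_2 = 0$, where $\ell_j$ is the equation of the polar of $P$ with respect to $E_j$. This is precisely the linearity noted in the Remark following Proposition \ref{conj-point}: the parametrization $[\lambda:\mu] \mapsto p_{[\lambda:\mu]}$ is linear, hence projective, and by Proposition \ref{conj-point} its image lies in the pencil of lines through $P'$.

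The crucial point to verify is that this projective map is nondegenerate, i.e. a genuine isomorphism onto the pencil of lines, and this is where the hypothesis $P \neq R, S, T$ enters — I expect it to be the main obstacle. The map degenerates exactly when $\ell_1$ and $\ell_2$ define the same line (or one of them vanishes), i.e. when $M_1 P$ and $M_2 P$ are proportional, say $(a M_1 + b M_2)P = 0$ for some $(a,b) \neq (0,0)$. But this says that $P$ is the singular point of the degenerate conic $a E_1 + b E_2$ of the pencil, i.e. $P$ is a double point of $\mc{F}$. Excluding $P \in \{R, S, T\}$ therefore guarantees that $\ell_1, \ell_2$ are independent and the map is a projective isomorphism.

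Finally, a projective isomorphism between two pencils preserves the cross ratio (for the target pencil of lines through $P'$ this is Proposition \ref{cross-ratio-line}), so $(p_1, p_2, p_3, p_4) = ([\lambda_1:\mu_1], \ldots, [\lambda_4:\mu_4]) = (\Gamma_1, \Gamma_2, \Gamma_3, \Gamma_4)$. A more geometric alternative, avoiding matrices, is to intersect the four polars with a transversal line $m$ through $P$ (Proposition \ref{cross-ratio-line}), identify each intersection point as the harmonic conjugate of $P$ with respect to the pair $\Gamma_i \cap m$ cut out by the Desargues involution (Proposition \ref{desargues-prop}), and then check that this harmonic conjugate depends projectively on the parameter $[\lambda_i:\mu_i]$.
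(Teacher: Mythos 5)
Your proof is correct, but it takes a genuinely different route from the paper's. The paper argues synthetically and in two stages: it first shows that $(p_1,p_2,p_3,p_4)$ is independent of the choice of $P$ --- by taking a second point $Q$, conjugating the line $d=PQ$ into a conic $\mc{C}$ (Proposition \ref{conj-line}), observing that the polars of $P$ and $Q$ with respect to $\Gamma_i$ meet $\mc{C}$ in the common point $P_i$ (the pole of $d$), and invoking the invariance of the cross ratio on a conic (Proposition \ref{cross-ratio-conic}) --- and then \emph{defers} the identification of this common value with $(\Gamma_1,\Gamma_2,\Gamma_3,\Gamma_4)$ to Theorem \ref{thm4}, which in turn rests on Theorem \ref{thm3}; the consistency of that bootstrap depends on Theorem \ref{thm3} using only the $P$-independence just established. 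Your argument instead works directly with the linear parametrization: writing $\Gamma_{[\lambda:\mu]} = \{\lambda E_1 + \mu E_2 = 0\}$ and noting $p_{[\lambda:\mu]} = \{\lambda\,\ell_1 + \mu\,\ell_2 = 0\}$, you get the equality of cross ratios in one step, and your degeneracy analysis --- $M_1P$, $M_2P$ proportional iff $P$ is the singular point of a degenerate member, i.e. $P \in \{R,S,T\}$ --- is exactly right and makes the role of the hypothesis transparent, something the paper's proof never isolates. Two small points: your argument tacitly uses that in a pencil of \emph{lines} $\lambda\ell_1+\mu\ell_2$ the cross ratio of four members equals that of the parameters, which the paper states only as an unproved Remark after Theorem \ref{thm4} (it is immediate by cutting with a transversal, so worth one line); and you take the parameter cross ratio as the \emph{definition} of $(\Gamma_1,\Gamma_2,\Gamma_3,\Gamma_4)$, which matches the abstract definition in the introduction but is a point the paper itself only reconciles with its geometric characterizations via that same Remark. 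What each approach buys: yours is shorter, self-contained, and logically flat (no forward references), at the cost of leaving the paper's purely synthetic idiom; the paper's proof stays within elementary projective geometry and its independence-of-$P$ argument is reused structurally, but its correctness as a proof of the full statement is only complete once Theorems \ref{thm3} and \ref{thm4} are in place.
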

Notice that from Proposition \ref{conj-point} we know that the polars $p_i$ are concurrent in $P'$. This theorem is well-known, see for instance \cite{Berger} 16.5.3.3, or \cite{Michel} on page 41.

\begin{figure}[h]
\centering
\includegraphics[height=4cm]{point-conjugue.pdf}
\hspace{0.5cm}
\includegraphics[height=4cm]{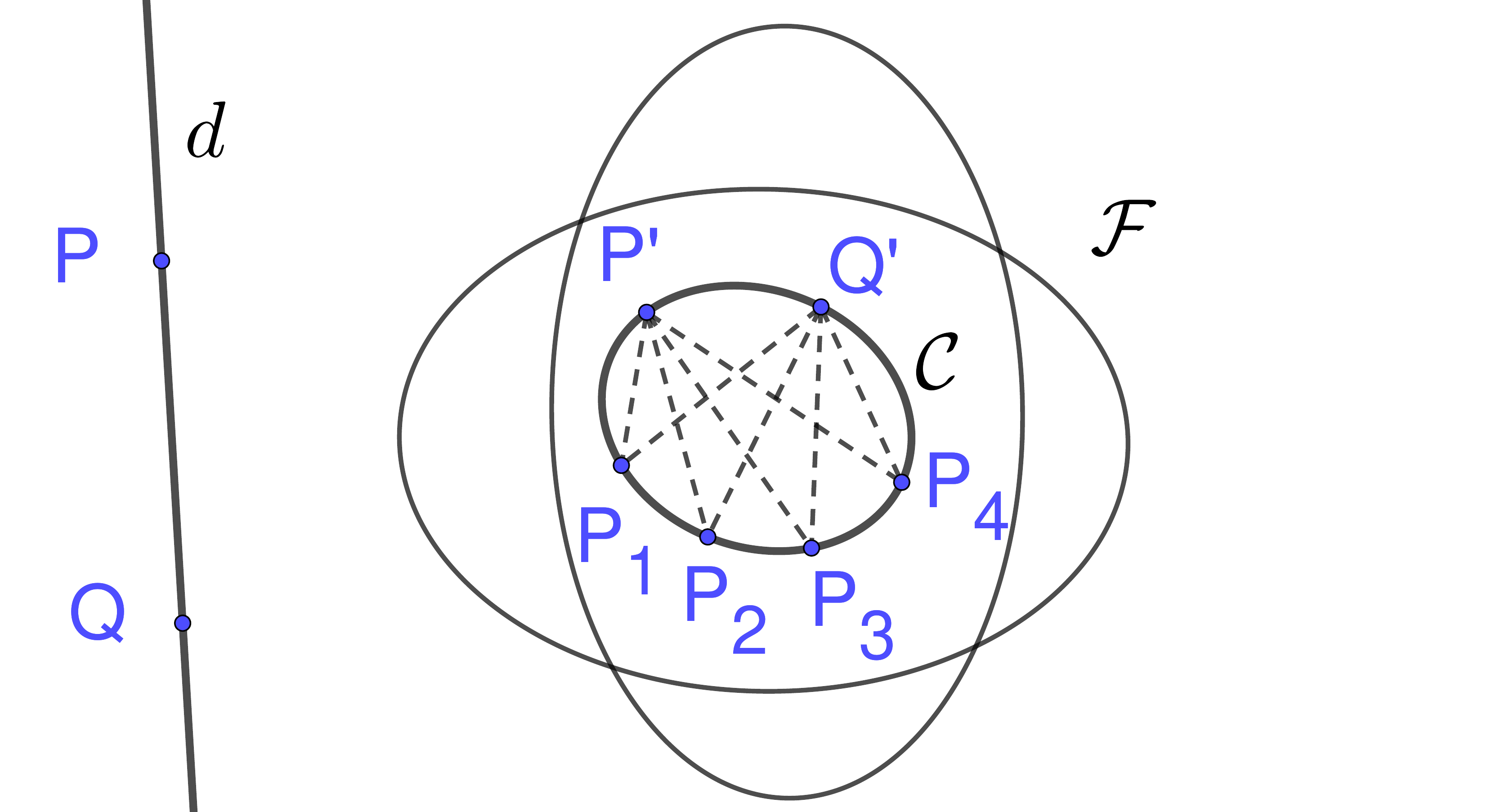}

\caption{Theorem \ref{thm1} and its proof (figure not realistic)}
\label{thm1-image}
\end{figure}

\begin{proof}
We prove that $(p_1,p_2,p_3,p_4)$ is independent of $P$. The fact that it coincides with the double ratio of the four conics will be done using theorem \ref{thm4}.

So take two points $P$ and $Q$. Denote by $d$ the line defined by these two points and by $\mc{C}$ the conjugation of $d$ with respect to the pencil $\mc{F}$ (see Proposition \ref{conj-line}).
The polar of $P$ with respect to $\Gamma_i$ intersects $\mc{C}$ in $P'$ and in a point denoted by $P_i$.
The polar of $Q$ with respect to $\Gamma_i$ is then given by $Q'P_i$, since $P_i$ is the pole of $d$ with respect to $\Gamma_i$.
Finally we get
$$(P'P_1,P'P_2,P'P_3,P'P_4)=(Q'P_1,Q'P_2,Q'P_3,Q'P_4)$$ by Proposition \ref{cross-ratio-conic}.
\end{proof}

When $P$ is one of the four base points of $\mc{F}$, we get:
\begin{coro}
The cross ratio of four conics in a pencil is given by the cross ratio of the four tangents at a base point.
\end{coro}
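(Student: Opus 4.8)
The plan is to deduce this directly from Theorem \ref{thm1} by specializing the point $P$ to one of the base points of $\mc{F}$.

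First I would fix a base point, say $A$. Since every conic of the pencil passes through all four base points, $A$ lies on each $\Gamma_i$. The crucial observation is that the polar of a point lying on a conic is exactly the tangent line to that conic at the point. So I would argue that the polar $p_i$ of $A$ with respect to $\Gamma_i$ coincides with the tangent $t_i$ to $\Gamma_i$ at $A$. Granting this, Theorem \ref{thm1} immediately gives $(\Gamma_1,\Gamma_2,\Gamma_3,\Gamma_4) = (p_1,p_2,p_3,p_4) = (t_1,t_2,t_3,t_4)$, which is exactly the assertion.

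To justify that $p_i = t_i$, I would argue from the harmonic-conjugate definition of the polar: a point $Q$ lies on $p_i$ when $(A,Q,X,Y)=-1$, where $X,Y$ are the intersections of the line $AQ$ with $\Gamma_i$. When $A$ already lies on $\Gamma_i$, one of these intersections degenerates to $A$ itself, and the correct reading of the harmonic condition in this configuration is obtained by letting a secant through $A$ rotate into the tangent: the two intersection points both collapse onto $A$ and the harmonic locus becomes the tangent line. I expect this limiting identification to be the main obstacle, since the naive evaluation of the cross ratio $(A,Q,A,Y)$ is degenerate; the cleanest remedy is to invoke the algebraic description of the polar, namely that if $\Gamma_i = \{x^{\top} M_i x = 0\}$ with $A^{\top} M_i A = 0$, then the polar $x \mapsto A^{\top} M_i x = 0$ passes through $A$ and is tangent there, so that $p_i = t_i$ becomes transparent.

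Finally I would verify the hypothesis of Theorem \ref{thm1}, namely that the chosen base point is not one of the double points $R, S, T$. For a non-degenerate pencil the double points are the three diagonal points $AB \cap CD$, $AC \cap BD$ and $AD \cap BC$ of the complete quadrangle on the base points, which are distinct from $A, B, C, D$ themselves; hence $A \neq R, S, T$ and Theorem \ref{thm1} applies, completing the argument.
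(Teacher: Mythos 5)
Your proposal is correct and matches the paper's approach exactly: the paper obtains this corollary by specializing $P$ in Theorem \ref{thm1} to a base point, where the polar of a point lying on a conic is the tangent line at that point. Your additional justifications (the algebraic description of the polar and the check that a base point is not a double point) are details the paper leaves implicit, but the underlying argument is the same.
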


We can also determine the cross ratio of four conics using a line:

\begin{thm}\label{thm2}
Let $d$ be a straight line. Denote by $\mc{C}$ the conic which is the image of $d$ with respect to $\mc{F}$. Denote further by $P_i$ the pole of $d$ with respect to $\Gamma_i$. Then $$(\Gamma_1,\Gamma_2,\Gamma_3,\Gamma_4)=(P_1,P_2,P_3,P_4)_{\mc{C}}.$$
\end{thm}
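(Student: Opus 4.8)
The plan is to reduce the statement to Theorem~\ref{thm1} by introducing an auxiliary point on $d$ and using pole--polar duality. First I would choose a point $P$ on $d$ distinct from the double points $R, S, T$ (only finitely many points need to be avoided), and denote by $P'$ its conjugated point with respect to $\mc{F}$, as in Proposition~\ref{conj-point}. Because $P$ lies on $d$ and $\mc{C}$ is, by Proposition~\ref{conj-line}, exactly the point-wise image of $d$ under conjugation, the point $P'$ lies on $\mc{C}$. By the same proposition each pole $P_i$ of $d$ with respect to $\Gamma_i$ also lies on $\mc{C}$.

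The crucial incidence is that the polar $p_i$ of $P$ with respect to $\Gamma_i$ passes through $P_i$. Indeed, $P_i$ being the pole of $d$ means that $d$ is the polar of $P_i$; since $P \in d$, pole--polar duality forces $P_i$ to lie on the polar of $P$, which is exactly $p_i$. On the other hand, Theorem~\ref{thm1} tells us that the four polars $p_1, p_2, p_3, p_4$ are concurrent at $P'$ and satisfy $(\Gamma_1,\Gamma_2,\Gamma_3,\Gamma_4) = (p_1,p_2,p_3,p_4)$. Putting the two facts together, each polar is the line $p_i = P'P_i$ joining the two points $P'$ and $P_i$ of $\mc{C}$.

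It then only remains to transfer the cross ratio of four concurrent lines onto the conic. Since $P'$ and all the $P_i$ lie on $\mc{C}$, Proposition~\ref{cross-ratio-conic} gives
$$(P'P_1,P'P_2,P'P_3,P'P_4) = (P_1,P_2,P_3,P_4)_{\mc{C}},$$
and chaining the equalities yields
$$(\Gamma_1,\Gamma_2,\Gamma_3,\Gamma_4) = (p_1,p_2,p_3,p_4) = (P'P_1,P'P_2,P'P_3,P'P_4) = (P_1,P_2,P_3,P_4)_{\mc{C}},$$
which is the desired identity.

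The only point requiring care is the treatment of degenerate configurations: one must make sure the chosen $P$ produces four distinct polars and that $P'$ differs from each $P_i$, so that all the cross ratios appearing are well defined; a generic choice of $P$ on $d$ secures this. Apart from this bookkeeping, the argument is a direct combination of pole--polar duality, Theorem~\ref{thm1}, and the fundamental conic property of Proposition~\ref{cross-ratio-conic}, so I do not expect any serious obstacle.
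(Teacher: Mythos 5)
Your argument is correct and is essentially the paper's own proof: the paper likewise picks a point $Q$ on $d$ with conjugated point $Q'$ distinct from the $P_i$, identifies the polar of $Q$ with respect to $\Gamma_i$ as the line $Q'P_i$ via pole--polar duality, and concludes by Theorem~\ref{thm1} together with the defining property of the cross ratio on the conic $\mc{C}$ (Proposition~\ref{cross-ratio-conic}). Your additional genericity bookkeeping matches the paper's hypothesis that the conjugated point be different from the $P_i$, so there is nothing to fix.
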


\begin{figure}[h]
\centering
\includegraphics[height=4cm]{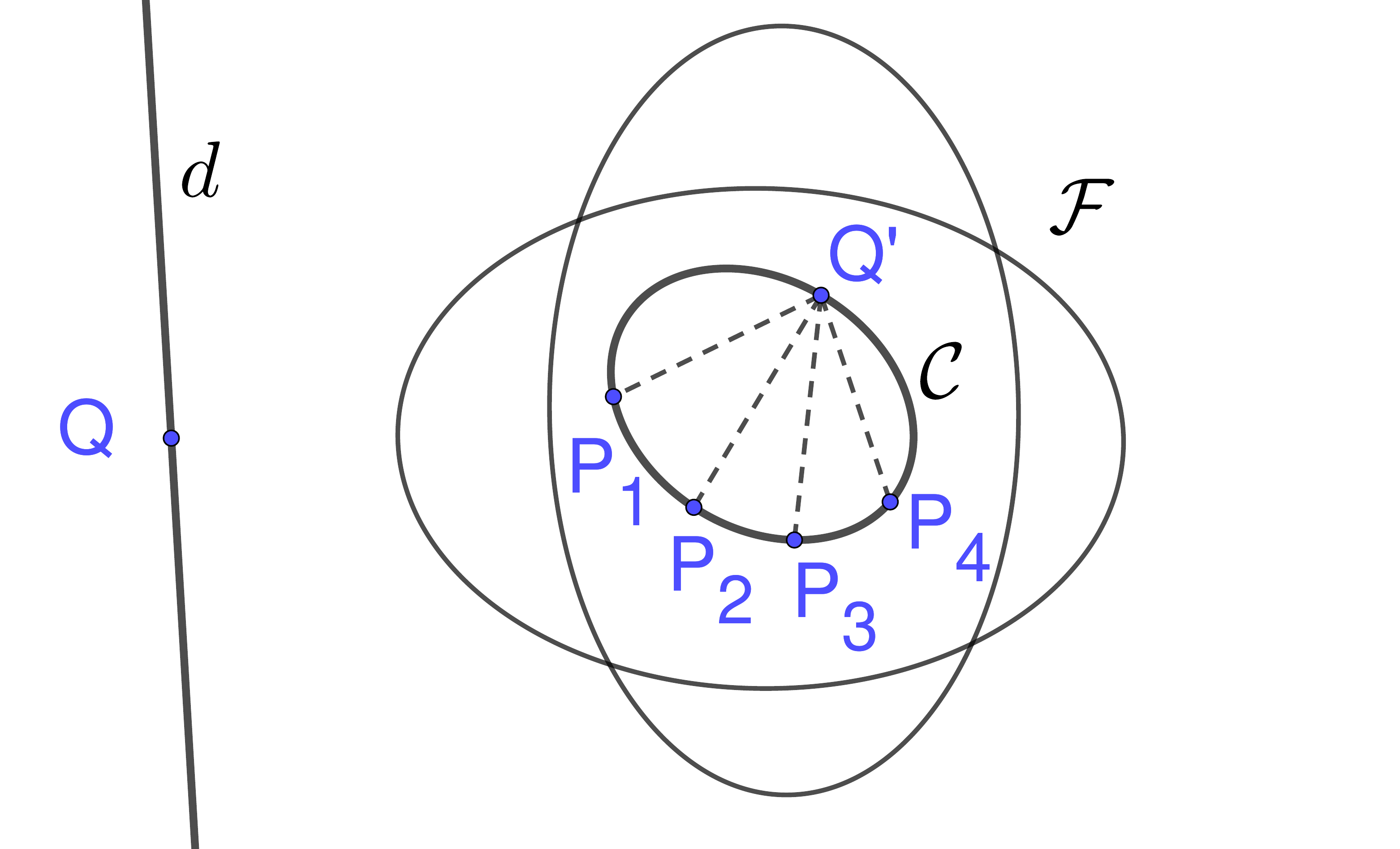}

\caption{Proof of theorem \ref{thm2} (figure not realistic)}
\label{thm2-image}
\end{figure}

\begin{proof}
Let $Q$ be on $d$ such that its conjugated point $Q'$ is different from the $P_i$. The polar of $Q$ with respect to $\Gamma_i$ is then given by $Q'P_i$. Hence:
\begin{align*}
(P_1,P_2,P_3,P_4)_{\mc{C}} &= (Q'P_1,Q'P_2,Q'P_3,Q'P_4) &\\
&= (\Gamma_1,\Gamma_2,\Gamma_3,\Gamma_4) & \text{ by Theorem \ref{thm1}.}
\end{align*}
\end{proof}

When $d$ is the line at infinity, we get:
\begin{coro}
The cross ratio of four conics in a pencil is the cross ratio of the four centers, taken on the conic of centers.
\end{coro}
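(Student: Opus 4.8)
The plan is to specialize Theorem \ref{thm2} to the case where $d$ is the line at infinity, and then to translate the two projective objects occurring in that theorem---the poles $P_i$ and the conic $\mc{C}$---into their affine meaning. The statement is essentially a dictionary entry for Theorem \ref{thm2}, so the entire content lies in correctly interpreting ``center'' and ``conic of centers'' projectively in $\C P^2$.

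First I would invoke the projective definition of the center of a conic: the center of a (non-degenerate) conic $\Gamma_i$ is by definition the pole of the line at infinity with respect to $\Gamma_i$. Consequently, taking $d$ to be the line at infinity in Theorem \ref{thm2}, the points $P_i$, defined there as the poles of $d$ with respect to the $\Gamma_i$, are precisely the centers of the four conics $\Gamma_1,\Gamma_2,\Gamma_3,\Gamma_4$.

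Next I would identify the conic $\mc{C}$. By Proposition \ref{conj-line}, the image of $d$ under conjugation with respect to $\mc{F}$ coincides with the set of all poles of $d$ taken over the conics of $\mc{F}$. For $d$ the line at infinity, this set of poles is exactly the set of centers of all conics in the pencil, i.e.\ the conic of centers. Hence the conic $\mc{C}$ attached to $d$ by Theorem \ref{thm2} is the conic of centers, and the four points $P_i$ indeed lie on it.

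Substituting these two identifications into the conclusion $(\Gamma_1,\Gamma_2,\Gamma_3,\Gamma_4)=(P_1,P_2,P_3,P_4)_{\mc{C}}$ of Theorem \ref{thm2} then yields exactly the statement: the cross ratio of the four conics equals the cross ratio of their four centers, computed on the conic of centers. The only point demanding care---and the closest thing to an obstacle---is to keep everything projective throughout (in $\C P^2$), so that both ``center'' and ``line at infinity'' are genuine projective objects and Theorem \ref{thm2} applies verbatim; under our standing assumption that the pencil is non-degenerate, no further difficulty arises.
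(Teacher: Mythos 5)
Your proposal is correct and is exactly the paper's derivation: the paper states the corollary immediately after Theorem \ref{thm2} with the single remark ``when $d$ is the line at infinity,'' leaving implicit precisely the dictionary you spell out (center $=$ pole of the line at infinity, and, via Proposition \ref{conj-line}, conic of centers $=$ conjugate conic $\mc{C}$ of the line at infinity). Making these identifications explicit is a faithful expansion of the intended argument, with no gap.
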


The geometric simplest way is probably the following:

\begin{thm}\label{thm3}
Consider a line $d$ passing through one of the base points of the pencil $\mc{F}$. Denote by $M_i$ the second intersection of $d$ with $\Gamma_i$. Then $$(\Gamma_1,\Gamma_2,\Gamma_3,\Gamma_4)=(M_1,M_2,M_3,M_4).$$
\end{thm}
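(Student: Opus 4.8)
The plan is to reduce the statement to the behaviour of the defining quadratic forms along the line $d$. Fix homogeneous coordinates so that the pencil is $\mc{F} = \{\lambda E_1 + \mu E_2 = 0\}$ and each $\Gamma_i$ is the member with parameter $[\lambda_i:\mu_i]$; by definition $(\Gamma_1,\Gamma_2,\Gamma_3,\Gamma_4)$ is the cross ratio of these four parameters in the line $\mathbb{P}^1 \subset \mathbb{P}(Q(\C^2))$. Without loss of generality $d$ passes through the base point $A$, so every member of $\mc{F}$ passes through $A$ and meets $d$ in $A$ and one further point $M$. First I would choose a projective parameter $t$ on $d$, with $A$ at $t = t_A$, and restrict the two forms to $d$, obtaining binary quadratics $q_1(t) = E_1|_d$ and $q_2(t) = E_2|_d$. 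Since $A$ lies on both $\Gamma_1$ and $\Gamma_2$ (being a base point), both $q_1$ and $q_2$ vanish at $t_A$; hence $q_j(t) = (t-t_A)\,\ell_j(t)$ with $\ell_j$ of degree at most one.

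The conic $\Gamma_i$ meets $d$ where $\lambda_i q_1 + \mu_i q_2 = (t-t_A)(\lambda_i \ell_1 + \mu_i \ell_2) = 0$, so the second intersection $M_i$ is the root of the residual linear form $\lambda_i \ell_1(t) + \mu_i \ell_2(t)$. The heart of the argument is then the observation that $[\lambda:\mu] \mapsto (\text{root of } \lambda\ell_1 + \mu\ell_2)$ is a projective (linear-fractional) map $\mathbb{P}^1 \to d$: writing $\ell_j(t) = a_j t + b_j$ gives $t_M = -(\lambda b_1 + \mu b_2)/(\lambda a_1 + \mu a_2)$. A projective map preserves the cross ratio, so $(M_1, M_2, M_3, M_4) = ([\lambda_1:\mu_1], \dots, [\lambda_4:\mu_4]) = (\Gamma_1, \Gamma_2, \Gamma_3, \Gamma_4)$, as desired.

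The one point requiring care, and the main obstacle, is nondegeneracy: one must check that this linear-fractional map is not constant, i.e. that the $2\times 2$ matrix with rows $(a_1, a_2)$ and $(b_1, b_2)$ is invertible. This is where the hypotheses enter, since proportional $\ell_1, \ell_2$ would make the second intersection independent of the conic, which happens only for degenerate positions of $d$ (for instance if $d$ is a component of a degenerate member of $\mc{F}$). For a non-degenerate pencil this cannot occur. As a synthetic cross-check one can evaluate the correspondence on the three degenerate conics $AB \cup CD$, $AC \cup BD$, $AD \cup BC$: each has $A$ as a smooth point lying on one component, and its second meeting with $d$ is $d \cap CD$, $d \cap BD$, $d \cap BC$ respectively, giving three distinct points for generic $d$ and thereby pinning the projectivity down on three parameters.

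Alternatively, one could run an entirely synthetic proof by first invoking the Corollary to Theorem \ref{thm1} to replace $(\Gamma_1,\dots,\Gamma_4)$ by the cross ratio of the tangents to the $\Gamma_i$ at $A$, and then showing that the correspondence from the tangent at $A$ to the second intersection with $d$ is projective. There the obstacle is precisely to establish this projectivity without coordinates, a step which the restriction argument above carries out most transparently.
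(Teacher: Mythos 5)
Your proof is correct, and it takes a genuinely different route from the paper's. The paper argues synthetically: it picks a point $P$ on $d$, invokes Lemma \ref{lemma1} (proved via the Desargues involution, Proposition \ref{desargues-prop}) to obtain lines $M_iN_i$ through a fixed point $X$ on $CD$, and transfers the cross ratio through the chain $(M_i)=(XM_i)=(Q_i)=(P'Q_i)$, where $P'Q_i$ is the polar of $P$ with respect to $\Gamma_i$, concluding by Theorem \ref{thm1} --- whose own identification with $(\Gamma_1,\Gamma_2,\Gamma_3,\Gamma_4)$ is only completed later via Theorem \ref{thm4} and the remark following it. You instead restrict the two generating quadratic forms to $d$, split off the common root at the base point $A$, and observe that the second intersection is the root of the residual linear form $\lambda\ell_1+\mu\ell_2$, a manifestly linear-fractional function of the pencil parameter $[\lambda:\mu]$. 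This is self-contained and proves the statement directly against the abstract definition of the cross ratio (points on a line in $\mathbb{P}(Q(\C^2))$), thereby short-circuiting the paper's bootstrap Theorem \ref{thm1} $\rightarrow$ Theorem \ref{thm3} $\rightarrow$ Theorem \ref{thm4} $\rightarrow$ identification with pencil parameters; your computation could in fact serve as the anchor of that whole chain, and your sketched synthetic alternative via the tangents at $A$ is exactly the paper's Corollary to Theorem \ref{thm1}. What the paper's route buys is a coordinate-free argument in keeping with its stated goal of geometric characterizations, together with the reusable synthetic fact of Lemma \ref{lemma1}, which reappears generalized in Theorem \ref{thm5}. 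One sharpening of your nondegeneracy discussion: the determinant $a_1b_2-a_2b_1$ vanishes exactly when $\ell_1\propto\ell_2$, which forces either a common root distinct from $t_A$ --- then that point lies on every member of $\mc{F}$, i.e.\ $d$ contains a \emph{second} base point --- or a common root at $t_A$, meaning every member is tangent to $d$ at $A$, impossible for a pencil with four distinct base points since the tangents at $A$ sweep out a pencil of lines bijectively in $[\lambda:\mu]$. So the phrase ``for a non-degenerate pencil this cannot occur'' is slightly loose: even for a non-degenerate pencil it occurs when $d$ joins two base points, and the correct exclusion is that $d$ meet the base locus only in $A$ --- an implicit hypothesis of the theorem in any case, since otherwise all $M_i$ coincide and the right-hand cross ratio is undefined.
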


We have not found this theorem in the literature, but its dual version is given without proof in \cite{Ingrao}, chapter VIII, 3.29. So we believe that Theorem \ref{thm3} is known.

\begin{figure}[h]
\centering
\includegraphics[height=4.5cm]{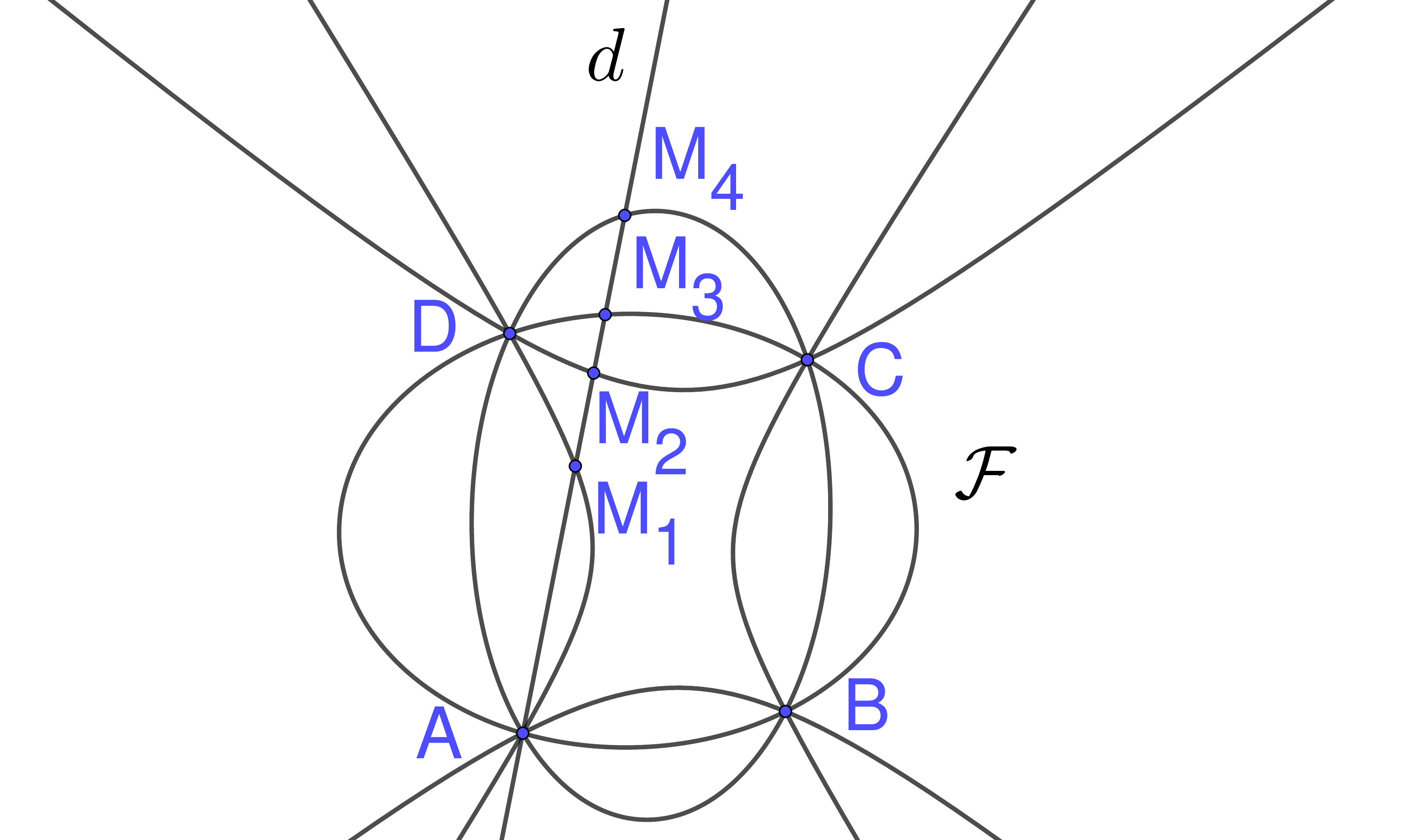}

\caption{Theorem \ref{thm3}}
\label{thm3-image}
\end{figure}

Before proving the theorem, we first need a lemma (Exercise 6.8 in \cite{Sidler}):
\begin{lemma}\label{lemma1}
Let $P$ be a point in the plane. For $\Gamma \in \mc{F}$, denote by $M$ and $N$ the second intersection points of $PA$ and $PB$ with $\Gamma$ (see Figure \ref{lemma-image}). Then the line $MN$ pass through a fixed point $X$ lying on $CD$, independent of $\Gamma \in \mc{F}$.
\end{lemma}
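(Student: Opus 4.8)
The plan is to reduce the statement to a classical fact about projectivities between two distinct lines: such a map sends corresponding points to concurrent joining lines (i.e. is a \emph{perspectivity}) precisely when it fixes the common point of the two lines. So I would first turn the moving data into a projective map between the fixed lines $PA$ and $PB$, then show it fixes $P$, and finally identify the centre of perspectivity.

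First I would analyse the two maps $\Gamma \mapsto M$ and $\Gamma \mapsto N$. Since $A$ lies on every conic of $\mc{F}$ and on the fixed line $PA$, the line $PA$ meets $\Gamma$ in $A$ and in its second point $M$; likewise $PB$ meets $\Gamma$ in $B$ and in $N$. Writing the pencil as $\l E_1 + \mu E_2 = 0$ and restricting to $PA$, the resulting quadratic in the line parameter has $A$ as one fixed root, so the second root, the parameter of $M$, is a Möbius function of $[\l:\mu]$. Hence $\Gamma \mapsto M$ is a projective map from $\mc{F}$ onto $PA$, and similarly $\Gamma \mapsto N$ is projective onto $PB$. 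Composing, the map $\phi : M \mapsto N$ is a projectivity between the two distinct lines $PA$ and $PB$.

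Next I would check that $\phi$ fixes their common point $P = PA \cap PB$. Let $\Gamma_0$ be the unique conic of $\mc{F}$ through $P$ (well defined since $P$ is not a base point). For $\Gamma_0$ the second intersection of $PA$ is $P$ itself, so $M = P$, and likewise $N = P$ on $PB$; thus $\phi(P)=P$. Because a projectivity between two distinct lines fixing their intersection point is a perspectivity, all the lines $MN$ pass through a single point $X$, the centre of perspectivity. This concurrency is the conceptual heart of the argument. It remains to locate $X$: evaluating on the degenerate conic $\Gamma = AB \cup CD$ of the pencil, the line $PA$ meets $\Gamma$ in $A$ (on $AB$) and in $PA \cap CD$, so $M = PA \cap CD$, and similarly $N = PB \cap CD$. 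Both $M$ and $N$ lie on $CD$, hence the line $MN$ is exactly $CD$; since $X$ lies on every such line, $X \in CD$, as claimed.

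The hard part will be making the two dependence maps genuinely projective and invoking the perspectivity criterion cleanly, together with the bookkeeping of nondegeneracy assumptions (for instance $P \notin AB$, so that $PA \ne PB$, and $P \notin CD$, so that the two intersection points on $CD$ are distinct and $MN = CD$). Once the projectivity $\phi$ is established and seen to fix $P$, both the concurrency and the location of $X$ on $CD$ follow immediately.
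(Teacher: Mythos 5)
Your proof is correct, and it takes a genuinely different route from the paper's. The paper argues synthetically via Desargues' third theorem: it introduces the auxiliary pencil $\mc{F}'$ with base points $A, B, M, N$, whose Desargues involution $i$ on the line $CD$ (Proposition \ref{desargues-prop}) swaps $C$ and $D$ (because $\Gamma \in \mc{F}'$ meets $CD$ there) and swaps $A' = PA \cap CD$ and $B' = PB \cap CD$ (because the line pair $PA$, $PB$ is a degenerate member of $\mc{F}'$); these two pairs do not depend on $\Gamma$, so $i$ is fixed, and $X = i(AB \cap CD)$, which pins down $X$ explicitly as the image of a known point under an involution known by two pairs. You instead prove directly, by restricting $\lambda E_1 + \mu E_2$ to $PA$ and splitting off the common root at $A$, that $\Gamma \mapsto M$ and $\Gamma \mapsto N$ are projective, so $M \mapsto N$ is a projectivity $PA \to PB$ fixing $P$, hence a perspectivity; the degenerate conic $AB \cup CD$ then places the centre $X$ on $CD$. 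Your route buys something real: the projectivity of $\Gamma \mapsto M$ is essentially Theorem \ref{thm3} itself (up to identifying the pencil parameter with the cross ratio, cf.\ the Remark after Theorem \ref{thm4}), so your lemma proof nearly subsumes the theorem this lemma serves, and the perspectivity criterion makes the concurrency conceptually transparent; the cost is that the key step is algebraic rather than synthetic, and that the argument does not transfer to the first part of Theorem \ref{thm5}, where the pairs $M_i, N_i$ lie on a conic $\mc{C}$ rather than on two fixed lines --- there the two-line perspectivity criterion is unavailable, while the paper's involution-on-$CD$ argument adapts verbatim (as the paper notes). One bookkeeping point: besides $P \notin AB$ and $P \notin CD$, your argument also needs $P$ off the four lines $AC, AD, BC, BD$, since e.g.\ $P \in AC$ makes $\Gamma \mapsto M$ constantly equal to $C$, so $\phi$ is undefined (in those cases the lemma holds trivially with $X = C$, etc.); this is the same level of implicit genericity as in the paper's proof, which likewise needs $CD$ to avoid the base points of $\mc{F}'$, so it is not a genuine gap.
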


\begin{figure}[h]
\centering
\includegraphics[height=4cm]{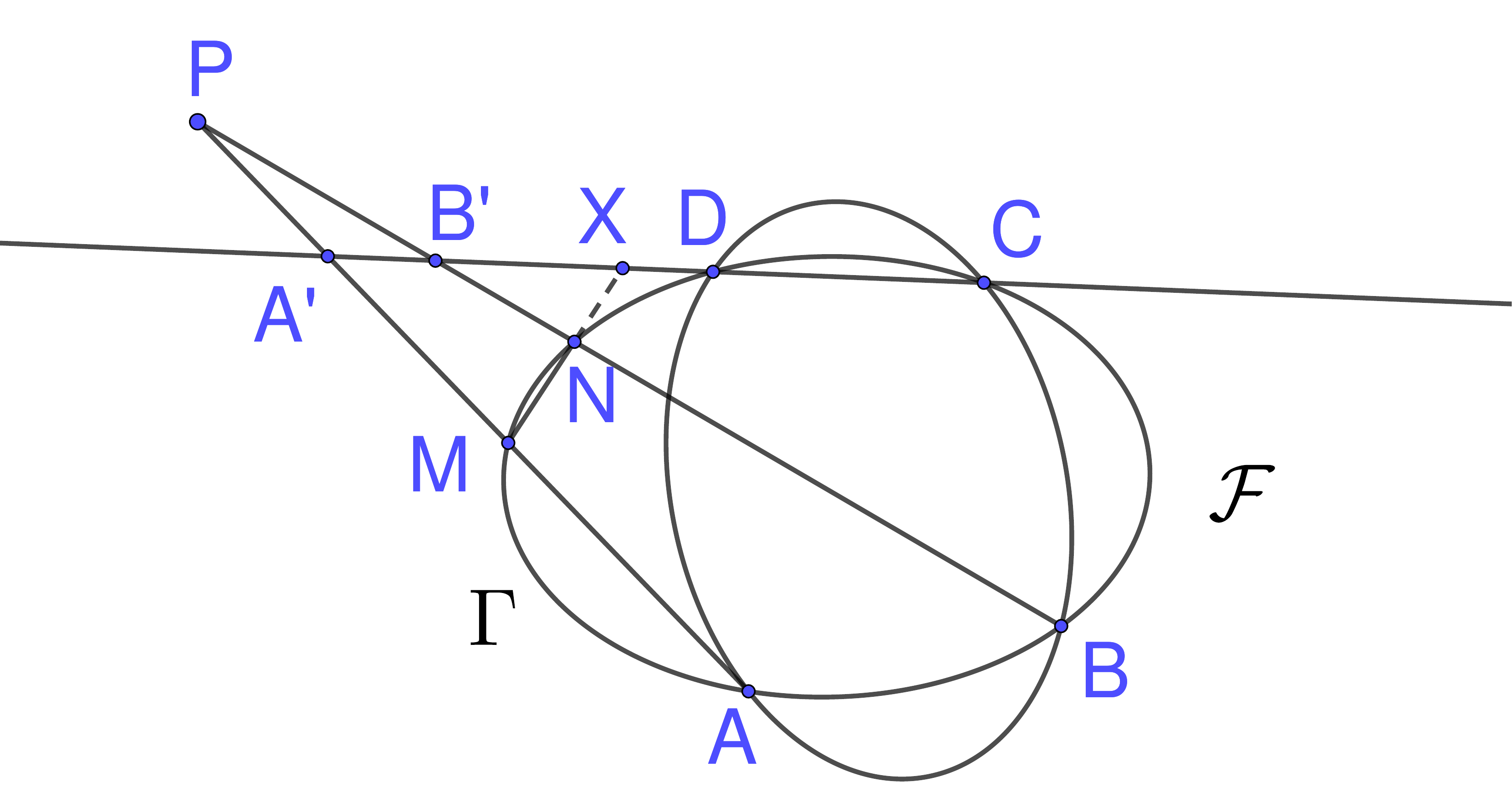}

\caption{Lemma \ref{lemma1}}
\label{lemma-image}
\end{figure}

\begin{proof}
Denote by $A'$ the intersection of $PA$ with $CD$, and the same for $B'$. Consider the pencil of conics $\mc{F}'$ defined by the base points $A, B, M, N$. By Proposition \ref{desargues-prop} it induces an involution $i$ on the line $CD$ (Desargues involution). 

The involution $i$ exchanges $C$ and $D$, and also exchanges $A'$ and $B'$. This already determines uniquely $i$. In particular, $i$ is independent of $\Gamma$, so independent of $M$ and $N$.

Finally, the intersection $X$ of $MN$ with $CD$ is the image under $i$ of $AB\cap CD$. Therefore $X$ is independent of $\Gamma \in \mc{F}$.
\end{proof}

\begin{proof}[Proof of Theorem \ref{thm3}]
Let $P$ be a point on $d$. Denote by $M_i$ and $N_i$ the intersections of $PA$ and $PB$ with $\Gamma_i$. According to the previous Lemma \ref{lemma1}, the lines $M_iN_i$ all pass through some fixed point $X$ on $CD$. Denote by $Q_i$ the intersection of $M_iN_i$ with $AB$, and by $P'$ the conjugate of $P$ with respect to $\mc{F}$. Finally we get

\begin{align*}
(M_1,M_2,M_3,M_4) &= (XM_1,XM_2,XM_3,XM_4) & \\
&= (Q_1,Q_2,Q_3,Q_4) & \text{ by Proposition \ref{cross-ratio-line}}\\
&= (P'Q_1,P'Q_2,P'Q_3,P'Q_4) \\
&= (\Gamma_1,\Gamma_2,\Gamma_3,\Gamma_4) & \text{ by Theorem \ref{thm1}}
\end{align*}

since $P'Q_i$ is the polar of $P$ with respect to $\Gamma_i$ (pass through $P'$ and $M_iN_i\cap AB$).
\end{proof}

\begin{figure}[h]
\centering
\includegraphics[height=4cm]{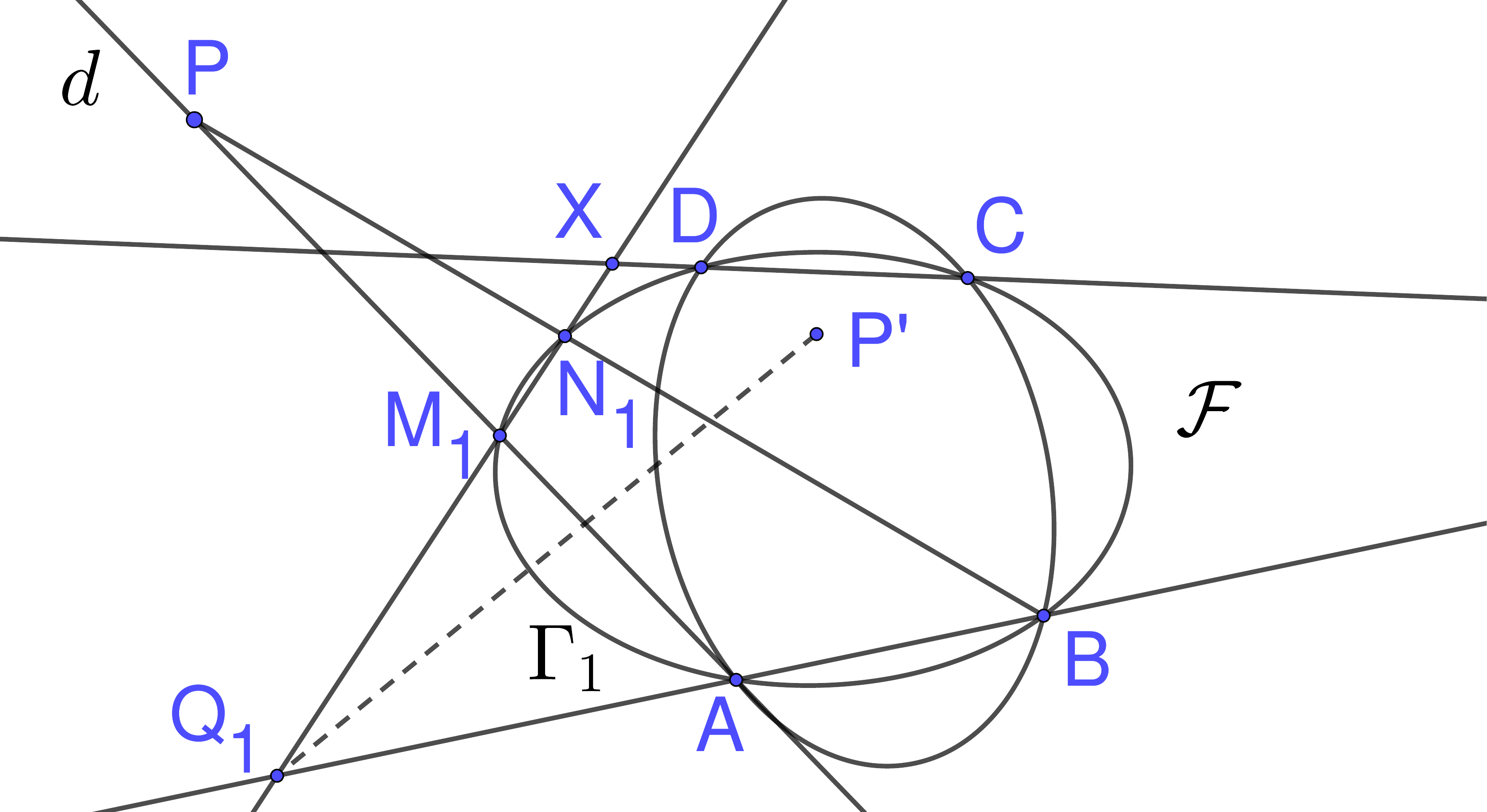}

\caption{Proof of Theorem \ref{thm3}}
\label{thm3-proof-image}
\end{figure}

The next characterization uses the following simple fact: a conic is uniquely determined by four points $A, B, C, D$ and a complex number $k$. It is the conic $\mc{C}$ passing through the four points such that $(A,B,C,D)_{\mc{C}}=k$.

\begin{thm}\label{thm4}
Denote by $k_i$ the complex number which uniquely determines a conic $\Gamma_i \in \mc{F}$ as above. Then $$(\Gamma_1,\Gamma_2,\Gamma_3,\Gamma_4)=(k_1,k_2,k_3,k_4).$$
\end{thm}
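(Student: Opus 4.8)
The plan is to show that the correspondence $\Gamma \mapsto k_\Gamma := (A,B,C,D)_{\Gamma}$ is a projective (homographic) reparametrization of the pencil $\mc{F}$, viewed as an abstract projective line. Since a homographic change of parameter on a projective line preserves the cross ratio, this immediately yields $(\Gamma_1,\Gamma_2,\Gamma_3,\Gamma_4)=(k_1,k_2,k_3,k_4)$. Writing a conic of the pencil as $\Gamma_{[\l:\mu]} : \l E_1 + \mu E_2 = 0$, the cross ratio $(\Gamma_1,\Gamma_2,\Gamma_3,\Gamma_4)$ is by definition the cross ratio of the four parameters $[\l_i:\mu_i]$, so everything reduces to proving that $k_\Gamma$ is a fractional linear function of $[\l:\mu]$.

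First I would re-express $k_\Gamma$ as a cross ratio of four lines through a single base point. Fix the base point $A$ and apply Proposition \ref{cross-ratio-conic}: for any $P$ on $\Gamma$ we have $(A,B,C,D)_{\Gamma} = (PA,PB,PC,PD)$. Letting $P$ tend to $A$ along $\Gamma$, the secant $PA$ tends to the tangent $t_A$ to $\Gamma$ at $A$, while $PB,PC,PD$ tend to the fixed chords $AB,AC,AD$. By continuity of the cross ratio this gives
\[
k_\Gamma = (t_A,\,AB,\,AC,\,AD),
\]
a cross ratio of four concurrent lines at $A$ in which only $t_A$ depends on $\Gamma$; the lines $AB,AC,AD$ depend only on the base points.

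It then remains to combine two elementary projectivities. On one hand, the tangent $t_A$ is exactly the polar of $A$ with respect to $\Gamma$ (since $A\in\Gamma$), and by the algebraic remark following Proposition \ref{conj-point} the polar of a fixed point with respect to $\l E_1+\mu E_2$ is the linear combination $\l p_1+\mu p_2$ of the polars with respect to $E_1,E_2$; hence $\Gamma \mapsto t_A$ is a projective map from $\mc{F}$ to the pencil of lines through $A$. On the other hand, with three of its four arguments held fixed, the cross ratio $(\,\cdot\,,AB,AC,AD)$ is a Möbius function of the projective parameter of the remaining line (immediate from the defining formula). Composing these two homographies shows that $k_\Gamma$ is a fractional linear function of $[\l:\mu]$, which is exactly what we want.

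The main point requiring care is the first sub-step of the last paragraph: verifying that $\Gamma \mapsto t_A$ is genuinely projective and non-degenerate (not constant), so that the composite is an honest homography rather than a collapsing map. Non-degeneracy holds because a conic is determined by the four base points together with the tangent direction at $A$, so distinct conics of $\mc{F}$ have distinct tangents at $A$; the projectivity itself follows from the linearity in $(\l,\mu)$ of the polar recalled above. Everything else is a routine composition of homographies, and no circular appeal to Theorem \ref{thm1} is needed, which is in fact the step whose proof was postponed to the present theorem.
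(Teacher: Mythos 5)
Your proof is correct, but it takes a genuinely different route from the paper. The paper argues synthetically: it picks a line $d$ through the base point $A$, lets $P_i = d \cap \Gamma_i$, and re-expresses $k_i = (P_iA,P_iB,P_iC,P_iD)$ as a cross ratio $(E,Q_i,C,D)$ on the line $CD$ by projection; normalizing coordinates so that $(D,E,C) \mapsto (0,1,\infty)$ gives $k_i = q_i$, hence $(k_1,k_2,k_3,k_4) = (Q_1,Q_2,Q_3,Q_4)$, which projects back through $B$ to $(P_1,P_2,P_3,P_4)$, and the conclusion follows from Theorem \ref{thm3} (which itself rests on Lemma \ref{lemma1} and Theorem \ref{thm1}). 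You instead show directly that $\Gamma \mapsto k_\Gamma$ is a homography of the pencil parameter, by the degeneration $k_\Gamma = (t_A, AB, AC, AD)$ of Proposition \ref{cross-ratio-conic} as $P \to A$, the linearity of the polar of $A$ (i.e.\ the tangent $t_A$) in $(\l,\mu)$, and the fact that a cross ratio is homographic in one argument; your injectivity check (four base points plus a tangent at one of them determine the conic) is the right one for a non-degenerate pencil. Your route buys two things. First, it is self-contained and bypasses the chain Theorem \ref{thm1} $\to$ Theorem \ref{thm3} $\to$ Theorem \ref{thm4}, which in the paper is logically delicate: the proof of Theorem \ref{thm1} explicitly defers the identification of the polar cross ratio with $(\Gamma_1,\Gamma_2,\Gamma_3,\Gamma_4)$ to Theorem \ref{thm4}, so the paper's argument really establishes mutual equality of the geometric quantities and anchors them to the abstract pencil-parameter cross ratio only through the Remark following Theorem \ref{thm4} — your homographic-reparametrization step supplies exactly this anchor explicitly, and you were right to flag the circularity issue. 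Second, your argument makes transparent \emph{why} the theorem holds (a projective change of parameter on the pencil line), whereas the paper's proof stays within elementary incidence constructions and simultaneously connects Theorem \ref{thm4} to the characterization of Theorem \ref{thm3}. The only point to polish is the limit $P \to A$: rather than invoking continuity, you can note that the tangent case of Proposition \ref{cross-ratio-conic} is an algebraic identity (or cite the Corollary after Theorem \ref{thm1}, where the polar of the base point $A$ with respect to $\Gamma_i$ is precisely $t_{A,i}$), so no analytic argument is actually needed.
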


\begin{wrapfigure}{r}{0.4\textwidth}
   \includegraphics[width=0.38\textwidth]{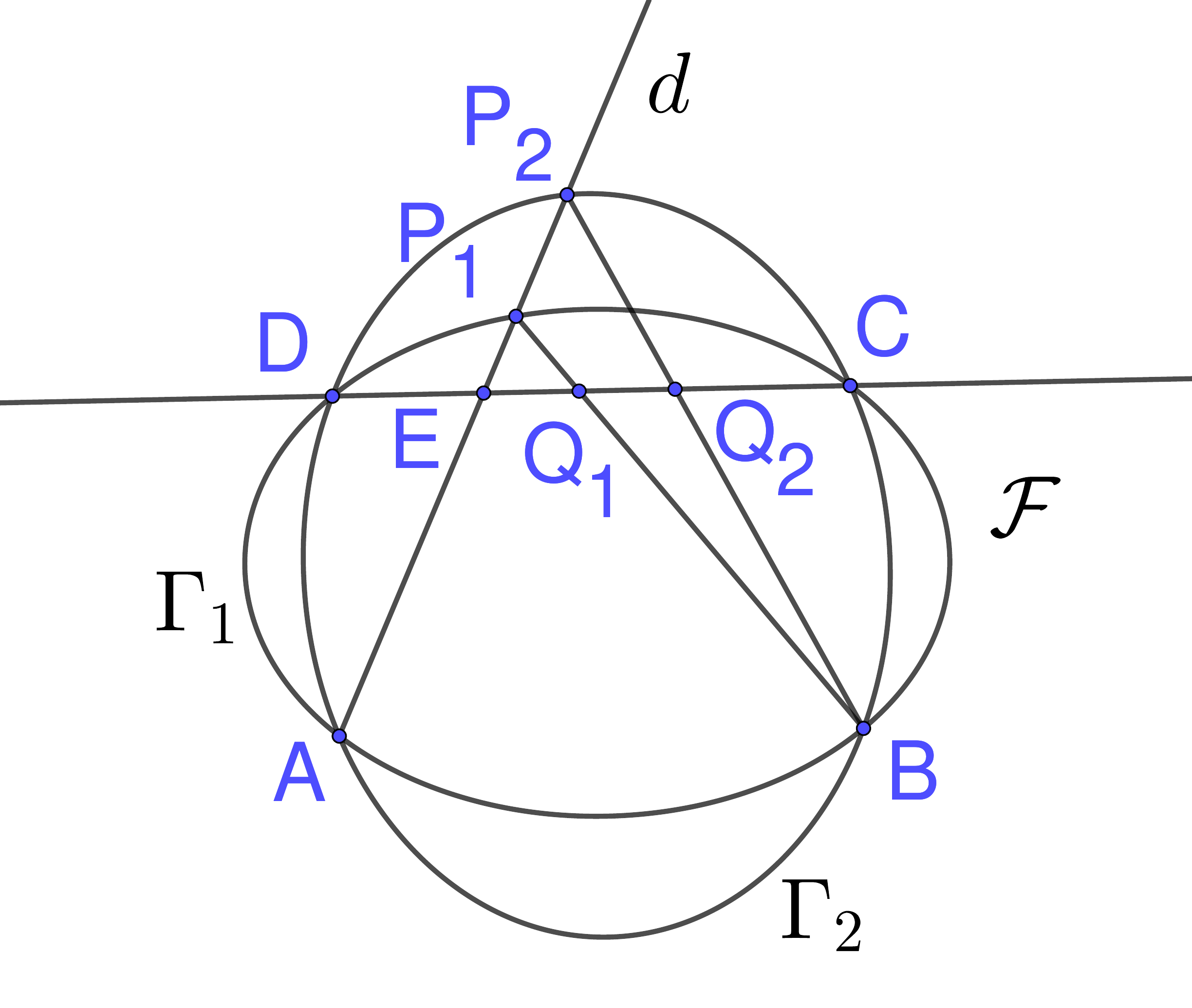}
 
\vspace{0.1cm}
\caption{Proof of Theorem \ref{thm4}}
\label{thm4-proof}
\end{wrapfigure}

\vspace{0.3cm}
\textit{Proof.}
Let $d$ be a line passing through $A$. Denote by $P_i$ the intersection of $d$ with $\Gamma_i$, by $Q_i$ the intersection of $BP_i$ with $CD$ and by $E$ the intersection of $d$ with $CD$, see Figure \ref{thm4-proof}.



Hence we have $k_i = (P_iA,P_iB,P_iC,P_iD) = (E,Q_i,C,D).$
So we have projected everything on the line $CD$. By a projective transformation, we can get $D \mapsto 0$, $E \mapsto 1$ and $C \mapsto \infty$. Denote the coordinate by $Q_i$ after this transformation by $q_i$. We then have by definition of the cross ratio
$k_i = (E,Q_i,C,D) = (1,q_i,\infty,0) = q_i.$
Hence we get $(k_1,k_2,k_3,k_4) = (Q_1,Q_2,Q_3,Q_4).$ By Proposition \ref{cross-ratio-line} we have $(Q_1,Q_2,Q_3,Q_4)=(P_1,P_2,P_3,P_4)$.
Therefore using Theorem \ref{thm3} we conclude:
\begin{center}
$(k_1,k_2,k_3,k_4) = (P_1,P_2,P_3,P_4) = (\Gamma_1,\Gamma_2,\Gamma_3,\Gamma_4). \qed$
\end{center}
\vspace{0.3cm}

\begin{Remark}
The following general statement is true: Given a pencil generated by two objects $P$ and $Q$, then any other object in the pencil is of the form $tP+(1-t)Q$. For four objects $O_i$ in the pencil, with parameters $t_i$, we have $$(O_1,O_2,O_3,O_4) = (t_1,t_2,t_3,t_4).$$
\end{Remark}

This remark directly implies that the abstract definition of the cross ratio $(\Gamma_1,\Gamma_2,\Gamma_3,\Gamma_4)$ (as points in a line in the space of quadratic forms, see the Introduction) coincides with our geometric characterizations.

\medskip
The cross ratio of four conics can be determined using another conic:

\begin{thm}\label{thm5}
Let $\mc{C}$ be a conic passing through two base points of the pencil of conics $\mc{F}$. The intersection points of $\mc{C}$ with $\Gamma_i$, other then the two base points, are denoted by $M_i$ and $N_i$. Then the lines $M_iN_i$ are concurrent and
$$(\Gamma_1,\Gamma_2,\Gamma_3,\Gamma_4)=(M_1N_1,M_2N_2,M_3N_3,M_4N_4).$$
\end{thm}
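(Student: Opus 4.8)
The plan is to recast the whole construction inside the space of conics passing through the two relevant base points, where it becomes a plain perspectivity. Without loss of generality let $\mc{C}$ pass through the base points $A$ and $B$. Choose quadratic polynomials $E_1, E_2$ generating the pencil $\mc{F}$, so that every $\Gamma \in \mc{F}$ has an equation $E_1 + t E_2 = 0$; by the Remark following Theorem~\ref{thm4} the cross ratio $(\Gamma_1,\Gamma_2,\Gamma_3,\Gamma_4)$ equals the cross ratio $(t_1,t_2,t_3,t_4)$ of the corresponding parameters. Let $E=0$ be the equation of $\mc{C}$ and let $L$ be the linear form vanishing on the line $AB$. The decisive observation is that, since $\mc{C}$ and $\Gamma_i$ meet in the four points $A, B, M_i, N_i$, the degenerate conic $L \cdot (M_iN_i)$ — the pair of lines $AB$ and $M_iN_i$ — passes through all four of these points and therefore belongs to the pencil $\langle \mc{C}, \Gamma_i \rangle$ generated by $\mc{C}$ and $\Gamma_i$.

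Next I would pass to the projective space $\Pi_{AB}$ of all conics through $A$ and $B$, a $\mathbb{P}^3$ (two linear conditions inside the $\mathbb{P}^5$ of conics). Inside it the conics containing the whole line $AB$, i.e. those of the form $L\cdot\ell$ with $\ell$ a line, form a hyperplane $H \cong \mathbb{P}^2$, and the correspondence $L\cdot\ell \leftrightarrow \ell$ identifies $H$ projectively with the dual plane of lines. The pencil $\mc{F}$ is a line of $\Pi_{AB}$; the conic $\mc{C}$ is a point $[\mc{C}]$ lying neither on $\mc{F}$ (since $\mc{C}\notin\mc{F}$, passing through only two base points) nor on $H$ (since $\mc{C}$ is nondegenerate); and the plane spanned by $[\mc{C}]$ and $\mc{F}$ meets $H$ in a line $m$. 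For each $\Gamma\in\mc{F}$ the line $[\mc{C}][\Gamma]$ is exactly the pencil $\langle\mc{C},\Gamma\rangle$, whose unique member lying on $H$ is the degenerate conic $L\cdot(M_\Gamma N_\Gamma)$ found above. Thus the map $\Gamma \mapsto L\cdot(M_\Gamma N_\Gamma)$ is the perspectivity with center $[\mc{C}]$ projecting the line $\mc{F}$ onto the line $m$.

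Since perspectivities preserve the cross ratio (this is the content of Proposition~\ref{cross-ratio-line}, applied to the two transversals $\mc{F}$ and $m$ of the pencil of lines through $[\mc{C}]$), the four points $L\cdot(M_iN_i)$ of $m$ have the same cross ratio as the four points $\Gamma_i$ of $\mc{F}$. These four points lie on the single line $m$ of the dual plane $H$, which is precisely the statement that the lines $M_1N_1,\dots,M_4N_4$ form a pencil, i.e. are concurrent; and their cross ratio as concurrent lines is the cross ratio of the corresponding points of $m$. Chaining the equalities yields $(\Gamma_1,\Gamma_2,\Gamma_3,\Gamma_4)=(t_1,t_2,t_3,t_4)=(M_1N_1,M_2N_2,M_3N_3,M_4N_4)$. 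I expect the only genuine obstacle to be the bookkeeping that makes the construction well posed: checking that $L\cdot(M_iN_i)$ really lies in $\langle\mc{C},\Gamma_i\rangle$ and is its unique member containing $AB$, together with the nondegeneracy conditions (that $[\mc{C}]$ avoids both $\mc{F}$ and $H$, and that the spanned plane meets $H$ transversally) — all of which hold in the generic situation to which the paper restricts.
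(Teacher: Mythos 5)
Your proposal is correct, but it takes a genuinely different route from the paper's. The paper argues synthetically in the plane, in two separate steps: concurrency of the lines $M_iN_i$ is obtained by a Desargues-involution argument generalizing Lemma \ref{lemma1} (the auxiliary pencil with base points $A, B, M_1, N_1$ induces on $CD$ the involution swapping $C, D$ and the two points of $\mc{C}\cap CD$, so $M_1N_1$ meets $CD$ at the image of $AB\cap CD$, a point $X$ independent of $i$); the cross-ratio identity is then proved by cutting with the polar $p$ of $X$ with respect to $\mc{C}$, using the harmonicity $(X,Q_i,N_i,M_i)=-1$ to recognize $X'Q_i$ as the polar of $X$ with respect to $\Gamma_i$, and invoking Theorem \ref{thm1}. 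You instead linearize: working in the $\mathbb{P}^3$ of conics through $A, B$, you exhibit $\Gamma\mapsto L\cdot(M_\Gamma N_\Gamma)$ as the perspectivity with center $[\mc{C}]$ from the line $\mc{F}$ onto the line $m=H\cap\Span([\mc{C}],\mc{F})$, which yields concurrency and the cross-ratio equality in a single stroke. Your key observation is sound: since $A, B, M_i, N_i$ lie on the nondegenerate conic $\mc{C}$, no three are collinear, so the conics through them form exactly the pencil $\langle\mc{C},\Gamma_i\rangle$, which therefore contains $L\cdot(M_iN_i)$. Your argument buys independence from Theorems \ref{thm1}--\ref{thm3} — it rests only on the abstract definition of the cross ratio of conics (endorsed in the Remark after Theorem \ref{thm4}) and on invariance of the cross ratio under perspectivity — and it even explains structurally why the concurrency point lies on $CD$: the fixed point $\mc{F}\cap m$ of your perspectivity is the degenerate member $[L\cdot CD]$, so the line $CD$ belongs to the pencil of lines dual to $m$. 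What the paper's proof buys in exchange is that it stays entirely within elementary plane constructions, in the expository spirit of the article, and reuses its earlier machinery. Two of the worries you flag at the end are in fact automatic: since $[\mc{C}]\notin H$, the plane $\Span([\mc{C}],\mc{F})$ is not contained in the hyperplane $H$ and hence meets it in exactly a line, and likewise each line $[\mc{C}][\Gamma]$ meets $H$ in exactly one point; so beyond the paper's standing genericity assumptions (e.g.\ $\Gamma_i$ not tangent to $\mc{C}$ at a base point, so that $A, B, M_i, N_i$ are distinct), no further bookkeeping is needed.
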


\vspace{-0.2cm}
\begin{figure}[h]
\centering
\includegraphics[height=3.7cm]{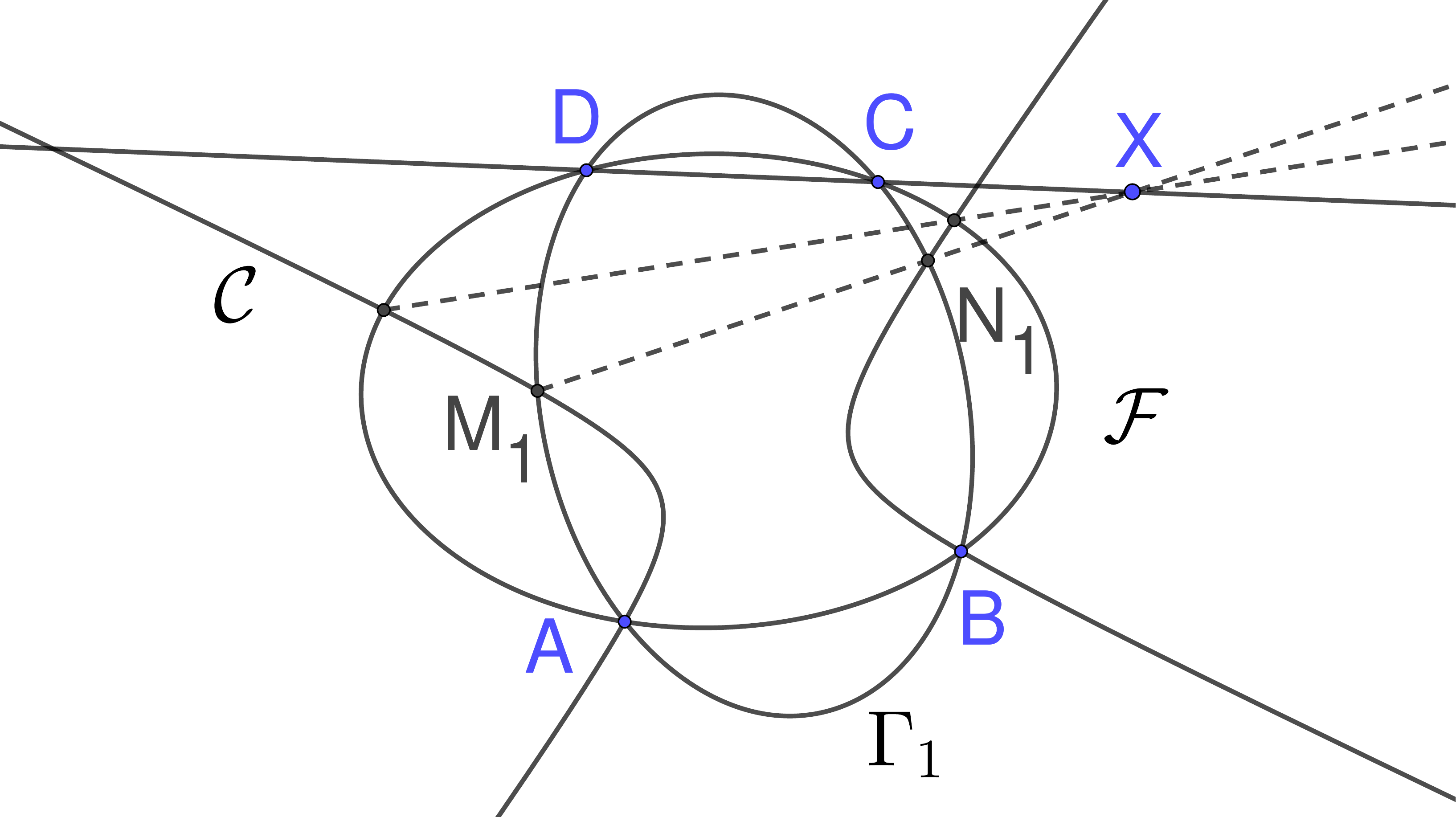}

\vspace{-0.2cm}
\caption{Cross ratio using a conic}
\label{thm5-proof-1}
\end{figure}

\begin{proof}
First, let us prove that the lines $M_iN_i$ all pass through a common point, which in addition lies on $CD$ (we assume that $\mc{C}$ passes through the base points $A$ and $B$). This is a generalization of lemma \ref{lemma1} and we can adopt its proof.

Consider the pencil of conics $\mc{F}'$ defined by the four base points $A, B, M_1, N_1$. On the line $CD$ this induces the Desargues involution $i$ which exchanges $C$ and $D$ and the two intersection points of $\mc{C}$ with $CD$. Thus $M_1N_1$ passes through the image under $i$ of $AB\cap CD$. This description is independent of $M_1N_1$.

Now we can show the second part:
Let $X'$ be the conjugated point to $X$ with respect to $\mc{F}$ (which also lies on $CD$), $p$ be the polar of $X$ with respect to $\mc{C}$ and $Q_i$ be the intersection of $p$ and $XM_i$ (see Figure \ref{thm5-proof-2}).

\begin{figure}[h]
\centering
\includegraphics[height=4cm]{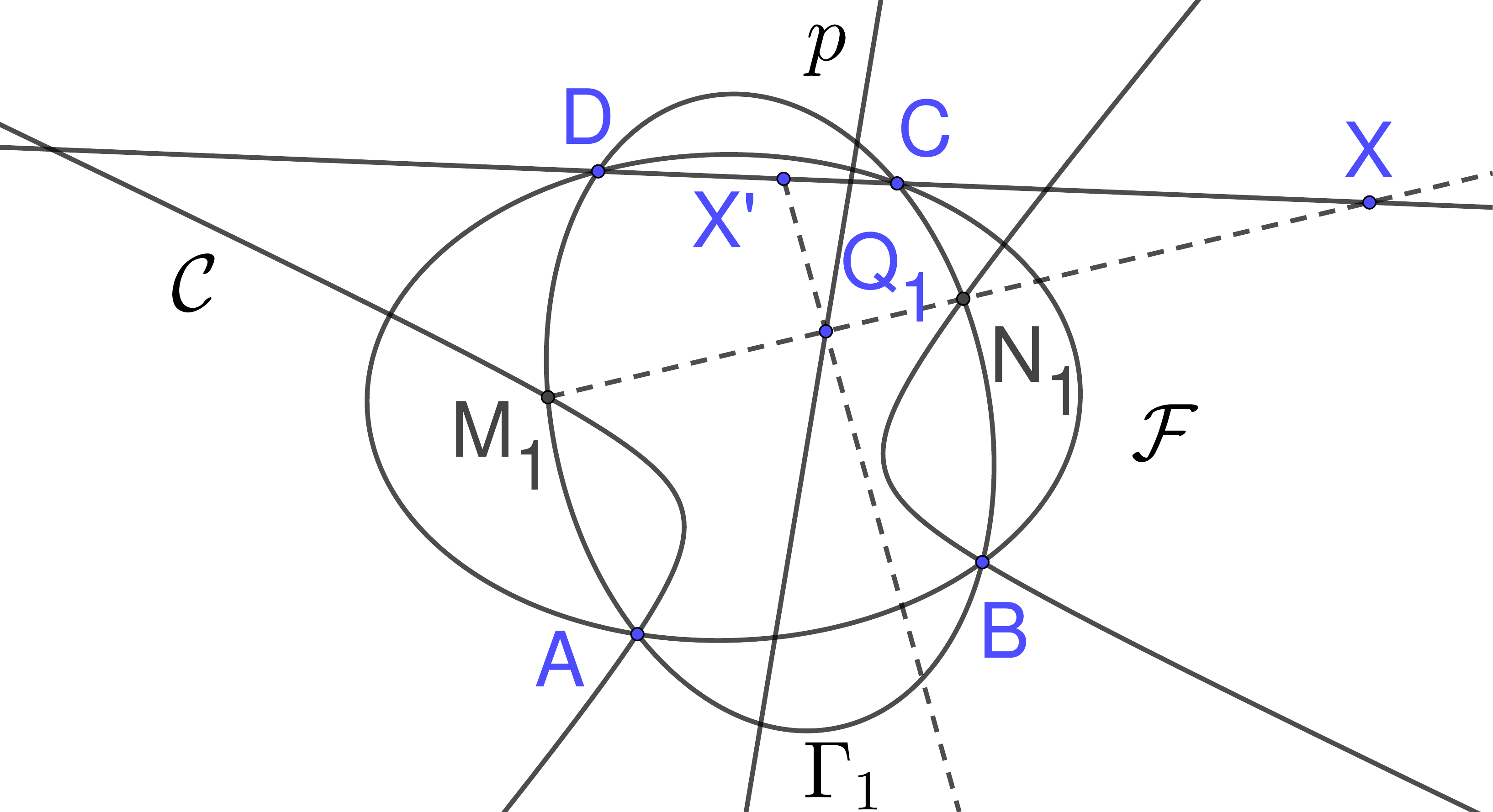}

\caption{Proof of Theorem \ref{thm5}}
\label{thm5-proof-2}
\end{figure}

According to the definition of the polar $p$, we have $$(X,Q_i,N_i,M_i)=-1.$$
Thus, the point $Q_i$ is also on the polar of $X$ with respect to $\Gamma_i$. Hence this polar is given by $X'Q_i$. Therefore
\begin{align*}
(M_1N_1,M_2N_2,M_3N_3,M_4N_4) &= (Q_1,Q_2,Q_3,Q_4) \\
&= (X'Q_1,X'Q_2,X'Q_3,X'Q_4) \\
&= (\Gamma_1,\Gamma_2,\Gamma_3,\Gamma_4)
\end{align*}
where in the last line we used Theorem \ref{thm1}.
\end{proof}

\begin{Remark}
The first part of the theorem, the fact that $M_iN_i$ goes through a fixed point on $CD$, is the ''three-conics theorem``. See for example \cite{Glaeser}, Theorem 7.3.2.
\end{Remark}

\begin{Remark}
This theorem gives another proof of theorem \ref{thm3} by taking for $\mc{C}$ a degenerate conic consisting of two lines going through two base points of $\mc{F}$.
\end{Remark}

\begin{Remark}
Together with Proposition \ref{conic-to-conic-prop}, we see that the pencil $\mc{F}$ induces an involution on the conic $\mc{C}$.
\end{Remark}

\begin{example}
Consider the following special case: take for $\mc{F}$ a pencil of circles and for $\mc{C}$ any circle, see Figure \ref{circle-pencil}. In fact, a conic is a circle iff it passes through the two cyclic points $I$ and $J$ with homogeneous coordinates $[1:i:0]$ and $[i:1:0]$ (which are complex and lie at infinity!), so the base points of a pencil of circles are the two intersection points $A$ and $B$, and the cyclic points $I$ and $J$. The circle $\mc{C}$ goes through $I$ and $J$.
Then Theorem \ref{thm5} gives directly the cross ratio of the circles.
\end{example}

\begin{figure}[h]
\centering
\includegraphics[height=3.7cm]{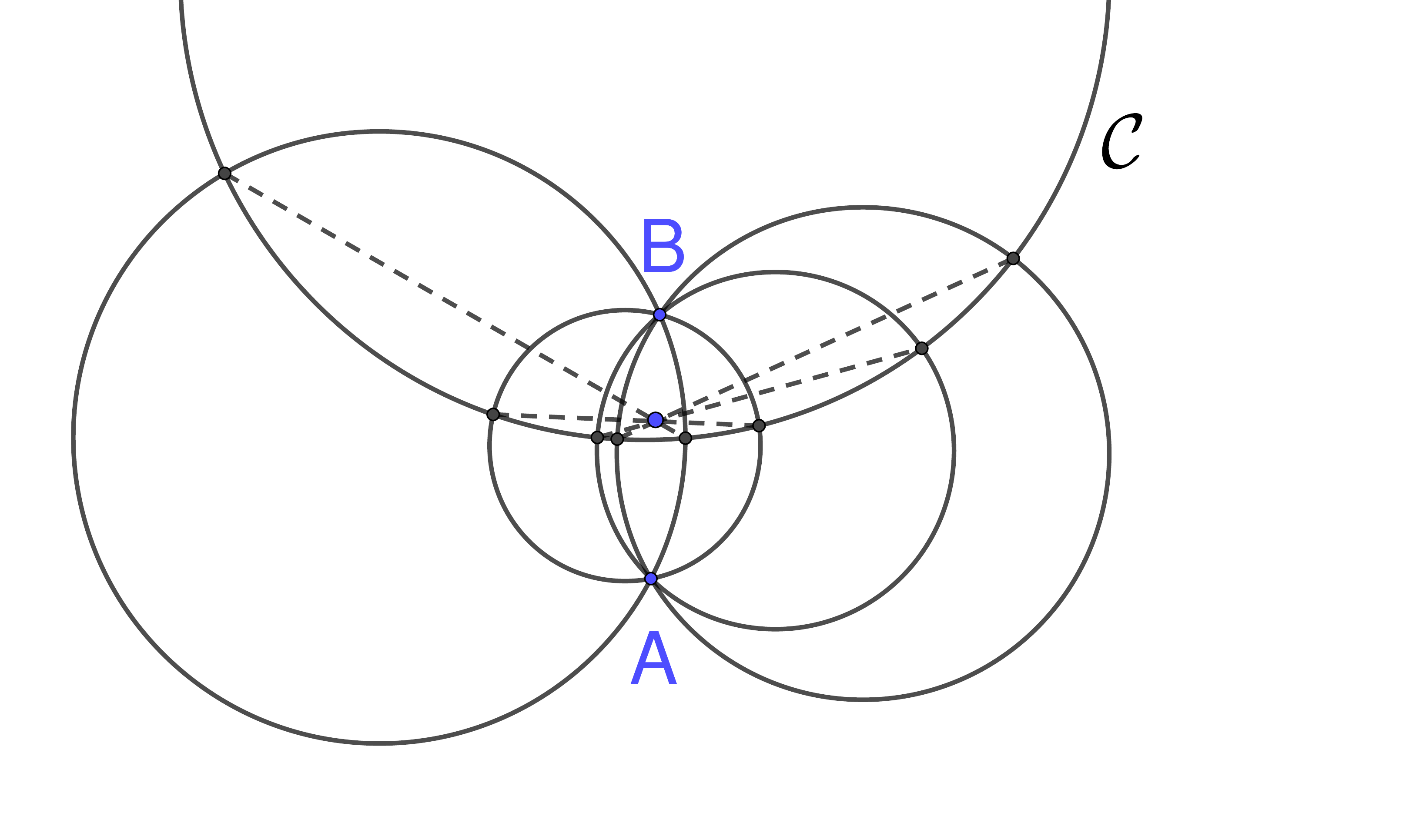}

\vspace{-0.3cm}
\caption{Case of a pencil of circles}
\label{circle-pencil}
\end{figure}

As corollary, we can consider the special case when the conic $\mc{C}$ goes through three of the base points. This is our favorite geometric way to get the cross ratio of conics:

\begin{coro}\label{thm6}
Let $\mc{C}$ be a conic going through three base points of the pencil $\mc{F}$. Denote by $M_i$ the fourth intersection point of $\mc{C}$ with $\Gamma_i$. Then $$(\Gamma_1,\Gamma_2,\Gamma_3,\Gamma_4) = (M_1,M_2,M_3,M_4)_{\mc{C}}.$$
\end{coro}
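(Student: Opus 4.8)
The plan is to obtain this statement as an immediate specialization of Theorem \ref{thm5}, in which one of the two free intersection points degenerates onto a base point. Suppose $\mc{C}$ passes through the three base points $A$, $B$, $C$. Theorem \ref{thm5} is stated for a conic $\mc{C}$ passing through only two base points, so first I would designate these two to be $A$ and $B$. Each conic $\Gamma_i$ then meets $\mc{C}$ in the base points $A$, $B$ together with two further points. But $C$ is also a base point, hence lies on $\mc{C}$ and on every $\Gamma_i$ simultaneously; so one of the two further intersection points is forced to equal $C$, while the other is the point $M_i$ of the statement. In the notation of Theorem \ref{thm5} I may therefore simply set $N_i = C$ for every $i$.

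With this identification in place, I would apply Theorem \ref{thm5} verbatim. It asserts that the lines $M_iN_i$ are concurrent at a point on $CD$ — here these lines are $CM_i$, which are trivially concurrent at $C$, and indeed $C \in CD$, so the conclusion is consistent — and that
$$(\Gamma_1,\Gamma_2,\Gamma_3,\Gamma_4) = (M_1N_1, M_2N_2, M_3N_3, M_4N_4) = (CM_1, CM_2, CM_3, CM_4).$$

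The last step converts this cross ratio of four concurrent lines into a cross ratio on the conic. Since $C$ lies on $\mc{C}$, Proposition \ref{cross-ratio-conic} (with the point $P$ taken to be $C$) gives
$$(M_1,M_2,M_3,M_4)_{\mc{C}} = (CM_1, CM_2, CM_3, CM_4),$$
and combining this with the previous display yields $(\Gamma_1,\Gamma_2,\Gamma_3,\Gamma_4) = (M_1,M_2,M_3,M_4)_{\mc{C}}$, as required.

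I do not expect a genuine obstacle here: the argument is a clean degeneration of the previous theorem. The only point requiring care is the bookkeeping of which intersection point plays which role — specifically, checking that the third base point $C$ legitimately and uniformly fills the slot of $N_i$ for all $i$, so that Theorem \ref{thm5} applies without modification. Once this identification is secured, the result follows at once.
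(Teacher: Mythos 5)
Your proposal is correct and is essentially identical to the paper's own proof, which likewise applies Theorem \ref{thm5} with $X=N_1=N_2=N_3=N_4=C$ and then invokes Proposition \ref{cross-ratio-conic} to pass from $(CM_1,CM_2,CM_3,CM_4)$ to $(M_1,M_2,M_3,M_4)_{\mc{C}}$. Your extra bookkeeping --- verifying that $C$ legitimately fills the role of every $N_i$ and that the concurrency point lies on $CD$ --- is a careful elaboration of the same one-line specialization.
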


\begin{figure}[h]
\centering
\includegraphics[height=4.2cm]{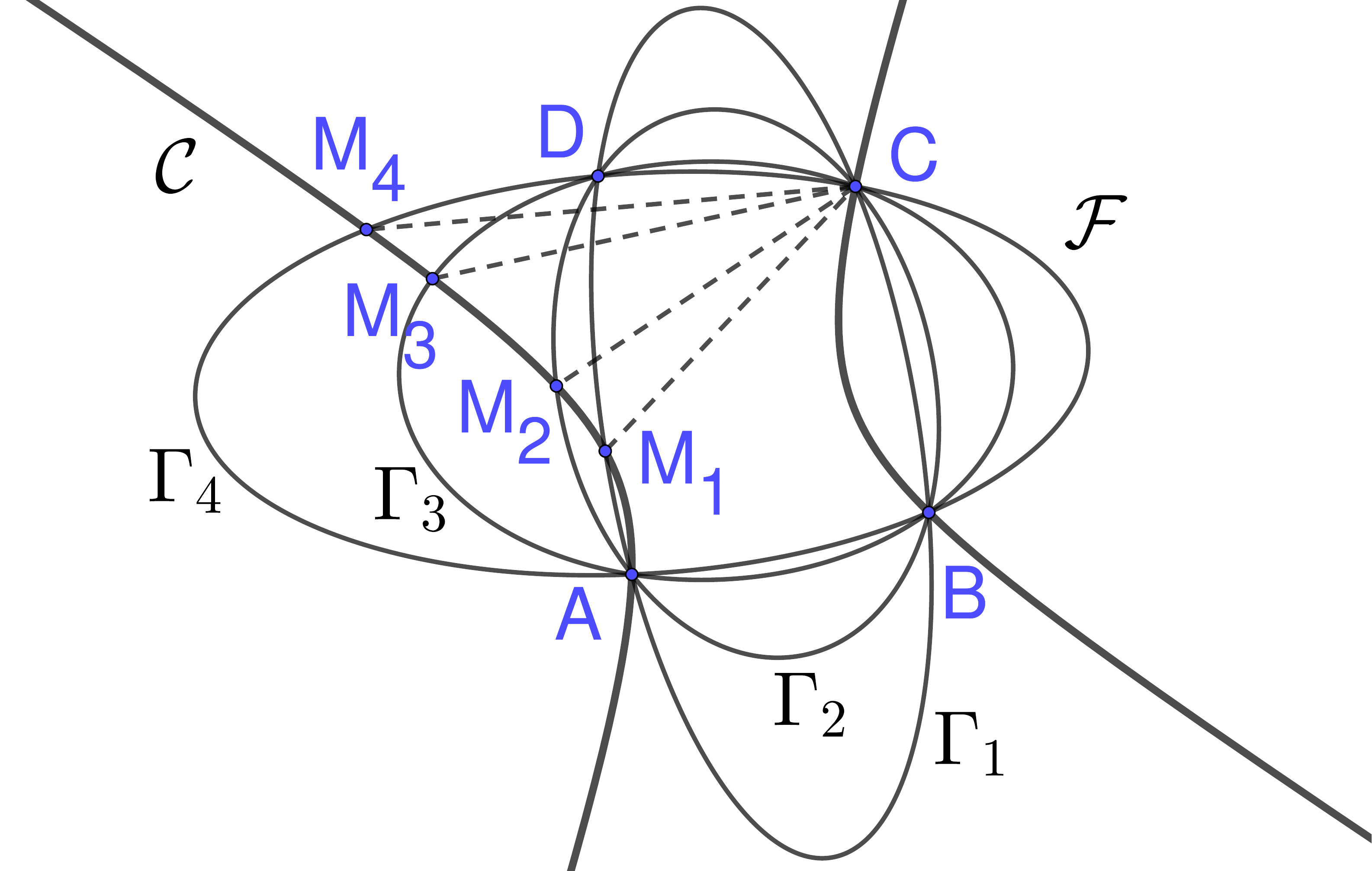}

\caption{Characterization with a conic}
\label{thm6-image}
\end{figure}

\begin{proof}
Take Theorem \ref{thm5} with $X=N_1=N_2=N_3=N_4=C$. Then using Proposition \ref{cross-ratio-conic} we conclude: $$(\Gamma_1,\Gamma_2,\Gamma_3,\Gamma_4) = (CM_1,CM_2,CM_3,CM_4) = (M_1,M_2,M_3,M_4)_{\mc{C}}.$$
\end{proof}

To finish, we prove that the conjugation with respect to a pencil of conics also preserves the cross ratio of lines in some sense:
\begin{thm}\label{thm7}
Let $d_1, d_2, d_3, d_4$ four concurrent lines, passing through some point $P$. Denote by $\mc{C}_i$ the conic which is the image of $d_i$ by conjugation with respect to some pencil $\mc{F}$. Then the $\mc{C}_i$ form a pencil of conics and $$(d_1,d_2,d_3,d_4)=(\mc{C}_1,\mc{C}_2,\mc{C}_3,\mc{C}_4).$$
\end{thm}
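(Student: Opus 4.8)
The plan is to first identify the four common base points of the conics $\mc{C}_i$, thereby establishing that they form a pencil, and then to transfer the cross ratio of the lines $d_i$ to that of the conics by means of a single auxiliary transversal line.

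First I would pin down the base points of the alleged pencil. By Proposition \ref{conj-line}, each $\mc{C}_i$, being the point-wise image of the line $d_i$ under conjugation with respect to $\mc{F}$, passes through the three double points $R, S, T$. Moreover, since every $d_i$ passes through the common point $P$, and conjugation sends $P$ to a single well-defined point $P'$ (Proposition \ref{conj-point}), this conjugate $P'$ lies on every image conic $\mc{C}_i$. Hence all four conics pass through the four points $R, S, T, P'$, so in general position they all belong to the pencil with these base points. This already settles the first assertion.

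Next I would compute $(\mc{C}_1,\mc{C}_2,\mc{C}_3,\mc{C}_4)$ using one transversal. Choose a line $\ell$ not through $P$ and set $E_i = \ell \cap d_i$, so that $(d_1,d_2,d_3,d_4)=(E_1,E_2,E_3,E_4)$ by Proposition \ref{cross-ratio-line}. Let $\mc{D}$ be the image of $\ell$ under conjugation, a conic through $R, S, T$ again by Proposition \ref{conj-line}, and let $E_i'$ be the conjugate of $E_i$. Proposition \ref{conjugation-pencil} then gives $(E_1,E_2,E_3,E_4)=(E_1',E_2',E_3',E_4')_{\mc{D}}$. The key observation is that $E_i'$ lies on $\mc{C}_i\cap\mc{D}$: indeed $E_i\in d_i$ forces $E_i'\in\mc{C}_i$, while $E_i\in\ell$ forces $E_i'\in\mc{D}$. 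Since conjugation is an involution ($P''=P$) and $\ell\not\ni P$, the fourth base point $P'$ does not lie on $\mc{D}$, so $\mc{D}$ meets the pencil $\{\mc{C}_i\}$ in exactly the three base points $R, S, T$, and $E_i'$ is precisely the fourth intersection point of $\mc{D}$ with $\mc{C}_i$. Applying Corollary \ref{thm6} to the pencil $\{\mc{C}_i\}$ with auxiliary conic $\mc{D}$ yields $(\mc{C}_1,\mc{C}_2,\mc{C}_3,\mc{C}_4)=(E_1',E_2',E_3',E_4')_{\mc{D}}$, and chaining the three equalities closes the argument.

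The main obstacle is purely one of general position, to be dispatched at the end: one must ensure $P'$ is well defined and distinct from $R, S, T$ (so $P$ is not a double point of $\mc{F}$), and choose $\ell$ so that $\mc{D}$ is nondegenerate and the points $E_i'$ avoid $R, S, T$, so that Proposition \ref{conjugation-pencil} and Corollary \ref{thm6} apply verbatim; the degenerate configurations then follow as limits. As a less elementary alternative that avoids the transversal entirely, one can note that conjugation is a quadratic birational map, regular near the non-fundamental point $P$, whose differential at $P$ induces a projective isomorphism between the pencil of lines through $P$ and the pencil of tangent directions at $P'$, sending $d_i$ to the tangent of $\mc{C}_i$ at the base point $P'$; the corollary to Theorem \ref{thm1} on the cross ratio of the tangents at a base point then finishes. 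I would nonetheless keep the synthetic argument above as the main proof, since it stays within the elementary framework of the paper.
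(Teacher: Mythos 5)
Your proposal is correct and follows essentially the same route as the paper's own proof: membership in the pencil via the common points $R, S, T, P'$, then an auxiliary transversal $\ell$ whose conjugate conic $\mc{D}$ passes through three base points of the new pencil, with the chain Corollary \ref{thm6}, Proposition \ref{conjugation-pencil}, Proposition \ref{cross-ratio-line} (your $\ell$, $E_i$, $E_i'$, $\mc{D}$ are the paper's $l$, $Q_i$, $T_i$, $\mc{C}$). Your added verifications --- that $E_i'$ really is the fourth intersection point since $P'\notin\mc{D}$ by involutivity, and the general-position remarks --- are details the paper leaves implicit, and they are sound.
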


\begin{proof}
First, all $\mc{C}_i$ pass through the three double points of $\mc{F}$ and through $P'$. Hence they all belong to a common pencil.

Second, let $l$ denote a line intersecting $d_i$ in some point $Q_i$ (see Figure \ref{thm6-image}). The image of $l$ under conjugation with respect to $\mc{F}$ is a conic $\mc{C}$ passing through the double points $R, S, T$ of $\mc{F}$. Denote by $T_i$ the fourth intersection point of $\mc{C}$ with $\mc{C_i}$ (other than $R, S, T$). We have that $T_i=Q_i'$ since $Q_i=d_i\cap l$.

\begin{figure}[h]
\centering
\includegraphics[height=4.2cm]{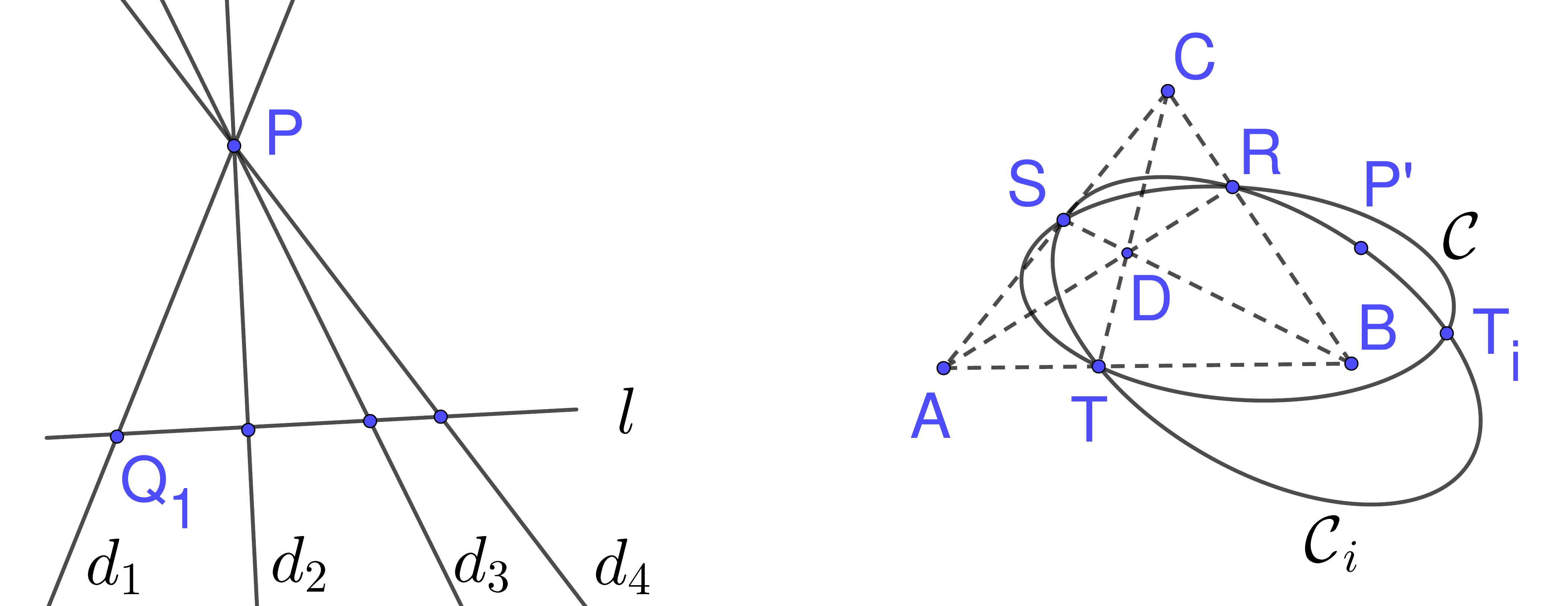}

\caption{Proof of Theorem \ref{thm7}}
\label{thm7-proof}
\end{figure}

Therefore
\begin{align*}
(\mc{C}_1,\mc{C}_2,\mc{C}_3,\mc{C}_4) &= (T_1,T_2,T_3,T_4)_{\mc{C}} & \text{ by Corollary \ref{thm6}} \\
&= (Q_1,Q_2,Q_3,Q_4) & \text{ by Proposition \ref{conjugation-pencil}} \\
&= (d_1,d_2,d_3,d_4) & \text{ by Proposition \ref{cross-ratio-line}}
\end{align*}
\end{proof}

As final remark, we wish to share our impression with the reader that \textit{in projective geometry, all reasonable possible equalities between cross ratios are true}.

\bigskip
\textit{\textbf{Acknowledgments.}} This project has been carried out during my undergraduate studies in Paris in 2012-2013. I am deeply grateful to Bodo Lass for his passionate lectures on projective geometry. Only during my postdoc at the Max-Planck Institute Bonn I took the time to write these notes down. I'm grateful to the institute for their support.

\end{document}